\begin{document}
\title{Metric properties of  mean wiggly continua}

\author{Jacek Graczyk\\
\small{Univ. de Paris-Sud}\\
\small{Lab. de Math\'ematiques}\\
\small{91405 Orsay, France}\\
\and 
Peter  W. Jones\\
\small{ Dept. of Math.}\\
\small{Yale, New Haven}\\
\small{ CT 06520, USA}\\
\and
Nicolae Mihalache\\
\small{Univ. Paris-Est Cr\'eteil}\\
\small{LAMA}\\
\small{ 94 010 Cr\'eteil, France}}
\maketitle
\newtheorem{lem}{Lemma}[section]
\newtheorem{conjec}{Conjecture}
\newtheorem{remark}{Remark}
\newtheorem{theo}{Theorem}
\newtheorem{coro}{Corollary}[section]
\newtheorem{prop}{Proposition}
\newtheorem{con}{Construction}[section]
\newtheorem{defi}{Definition}[section]
\newcommand{\A}{{\cal A}}
\newcommand{\B}{{\cal B}}
\newcommand{\de}{{\bf \delta}}
\newcommand{\sr}{\mbox{{{\bf I\hskip -2pt R}}}}
\newcommand{\un}{\underline}
\newtheorem{fact}{Fact}[section]

\newcommand\requ[1]{(\ref{equ#1})}
\newcommand\rcor[1]{Corollary \ref{coro#1}}

\def\Z{\Bbb Z}
\def\C{\Bbb C}
\def\NN{\Bbb N}
\def\CC{\hat{\Bbb C}}
\def\RR{\Bbb R}
\def\F{{\mathcal F}}
\def\G{{\mathcal G}}
\def\B{{\mathcal B}}
\def\ss{\subset}
\def\se{\subseteq}
\def\sm{\setminus}
\def\pa{\partial}
\def\ol{\overline}

\font\mathfonta=msam10 at 11pt
\font\mathfontb=msbm10 at 11pt
\def\Bbb#1{\mbox{\mathfontb #1}}
\def\lesssim{\mbox{\mathfonta.}}
\def\suppset{\mbox{\mathfonta{c}}}
\def\subbset{\mbox{\mathfonta{b}}}
\def\grtsim{\mbox{\mathfonta\&}}
\def\gtrsim{\mbox{\mathfonta\&}}

\def\Sig{{\bf\Sigma}}
\def\area{\mathrm{area}}
\def\diam{{\mathrm{diam}}}
\def\Jac{{\mathrm{Jac}}}
\def\Comp{{\mathrm{Comp\;}}}
\def\mod{{\rm{mod}}}
\def\spt{{\mathrm{supp\,}}}
\def\Var#1{{{\mathrm Var}\br{#1}}}
\def\dist#1{{{\mathrm{dist}}\br{#1}}}
\def\qhdist#1{{{\mathrm{dist_{qh}}}\br{#1}}}
\def\qhl#1{{{\mathrm{l_{qhyp}}}\br{#1}}}

\def\eps{\varepsilon}
\def\del{\Delta}
\def\la{\lambda}
\def\ga{\gamma}
\def\Ga{\Gamma}
\def\be{\beta}
\def\om{\Omega}
\def\al{\alpha}
\def\lam{\lambda}
\def\ph{\varphi}
\def\dd{\partial}

\def\Cal#1{{\cal#1}}
\def\J{\mathcal{J}}
\def\C{\Bbb C}
\def\N{\Bbb N}
\def\CC{\hat{\Bbb C}}
\def\D{\Bbb D}
\def\R{\Bbb R}
\def\one{\Bbb {2}}
\def\Crit{{\mathrm{Crit}}}
\def\ha{{\cal H}}
\def\J{{\cal J}}
\def\scal#1#2{\left\langle{#1},{#2}\right\rangle}
\def\br#1{\left(#1\right)}
\def\brr#1{\left[#1\right]}
\def\brs#1{\left\{#1\right\}}
\def\abs#1{\left|#1\right|}
\def\norm#1{\left\|#1\right\|}
\def\len#1{{{\mathrm{length}}\br{#1}}}
\def\qhlen#1{{{\mathrm{length_{qh}}}\br{#1}}}

\def\HD{{\mathrm{dim_H}}}
\def\HP{{\mathrm{dim_P}}}
\def\HDo{{\mathrm{dim_\omega}}}
\def\H{{\mathcal{H}^1}}
\def\MD{{\mathrm{MDim}}}
\def\MDsup{{\overline{\mathrm{MDim}}}}
\def\MDinf{{\underline{\mathrm{MDim}}}}
\def\HH{{\mathrm{HypDim}}}
\def\dwhit{{\delta_{Whit}}}
\def\dpoin{{\delta_{Poin}}}
\def\dconf{{\delta_{conf}}}

\def\ra{{\rightarrow}}

\def\const{{\mathrm{const}}}
\def\fr{\frac}
\def\itemm#1{ \item{\makebox[0.3in][l]{#1}}}

\def\m#1{\mu(#1)}
\def\mum{\mu}
\def\mmax{\mu_{max}}
\def\l2t{L}
\def\supF{{\sup|F'|}}
\def\xx#1#2{{{\cal H}_{#1}(#2)}}
\def\diam{{\mathrm{diam\;}}}
\def\degg#1#2{{\# \br{#1,#2}}}
\def\Jls{{J_*}}
\def\Jlso{{J_{*,\epsilon}}}
\def\typei{{\bf I}}
\def\typeii{{\bf II}}
\def\typeiii{{\bf III}}
\def\gr{{\mathrm g}}
\def\sg{{\mathrm sg}}
\def\es{{\emptyset}}
\def\rg{{\mathrm rg}}
\def\Fe{{F^{-1}_{\mathrm{rl}}}}


\begin{abstract}
We study lower and upper bounds of the Hausdorff dimension for sets which are wiggly at scales of positive density. The main technical ingredient is a construction, for every continuum $K$, of a Borel probabilistic measure $\mu$ with the property that  on every ball $B(x,r)$, $x \in K$,   the measure is bounded by a universal constant  multiple of $r\exp(-g(x,r))$, where $g(x,r) \geq 0$ is an explicit function. The continuum $K$ is mean wiggly at exactly those points $x\in K$ where $g(x, r)$ has  a logarithmic growth to $\infty$ as $r \ra 0$. The theory
of mean wiggly continua leads, via the product formula for dimensions,  to new estimates of  the Hausdorff dimension for Cantor  sets.  We prove also that asymptotically flat sets are of Hausdorff dimension $1$ and that asymptotically non-porous continua are of the maximal dimension. Another  application of the theory is geometric Bowen's dichotomy  for Topological Collet-Eckmann maps in rational dynamics. In particular, mean wiggly continua  are dynamically natural as they occur as Julia sets of  quadratic polynomials for parameters from a generic set on the boundary
of the Mandelbrot set $\cal M$.
\end{abstract}
\section{Introduction}
\subsection{Overwiew}
An intuition about a compact connected set is that if it oscillates  at every scale then its
Hausdorff dimension is strictly bigger than $1$.   One way to  quantify the concept
of geometric  oscillations is to use the theory of $\beta$-numbers of Bishop and Jones, see Definition~\ref{defi:1}.  A set $K$  is called  wiggly at $x$ and  scale $r>0$ 
if one can draw a triangle contained in the ball $B(x,r)$  with vertices in $K$ 
so that after rescaling to the unit ball, the triangle belongs to a compact family of triangles in $\R^d$, with $d\geq 2$. 
An immediate consequence of this approach is that  essentially, by the Pythagorean Theorem, 
the set $K$ ``accumulates'' an additional length at scale $r$.  
The word ``essentially'' is needed here because a wiggly continuum  can intersect $B(x,r)$ along  disjoint intervals. Even in the case of the regular intersections, the accumulation of  length at  scale $r$ remains true but the mechanism of a local length growth is more complicated and comes from the global hypothesis  about the connectivity of $K$.

In~\cite{bijo2}, it is proven that  every connected and compact planar set $K$ which
oscillates  uniformly at every scale around every point of  $K$ has the Hausdorff dimension
strictly bigger than $1$. The dimension  estimates of \cite{bijo2}  are   quantified in terms of 
$\beta$-numbers.
\begin{defi}\label{defi:1}
Let $K$ be a bounded set in $\R^d$ with $d\geq 2$, $x\in K$ and $r>0$.   
We define $\beta_{K}(x,r)$ by
$$ \beta_{K}(x,r):=  \inf_{L} \sup_{z\in K\cap B(x,r)} \frac{\dist{z,L}}{r}\ ,$$
where the infimum is taken over all lines $L$ in $\R^d$.
\end{defi} 
A  connected  bounded  set $K$ is called {\em uniformly wiggly} if 
$$\beta_{\infty}(K): = \inf_{x\in K}\inf_{r\leq \diam K} \beta(x,r)>0\;. $$
Theorem~1.1 of \cite{bijo2} states that if $K\subset \C$ is a continuum and $\beta_\infty(K)>0$
then $\HD(K)\geq 1+c\beta_{\infty}^{2}(K)$, where $c$ is a universal constant.

One of the  main objectives  of this paper is to introduce analytic  tools based on  corona type constructions~\cite{len, gar, joncor} which can be useful in  the area of complex dynamics. In the area of dynamical systems one cannot expect that  generic systems are uniformly hyperbolic and that geometry
of invariant fractals can be estimated at every scale as it is required in ~\cite{bijo2}. However,  there are various results  showing that non-uniformly hyperbolic systems are typical in ambient  parameter spaces~\cite{jak, Becaa, rees, aspenberg, harm, smir}. Our results which rely only on 
mean  estimates of wiggliness  fit into a general scheme of studying metric properties of attractors for  generic dynamical systems.

A direct outcome of the proposed methods is Theorem~\ref{thmMain} which
gives  an integral and non-uniform version  of the dimension results of~\cite{bijo2}. 
Theorem~\ref{thminv} shows that the estimates of Theorem~\ref{thmMain} are 
sharp. The main technical difficulty in proving Theorem~\ref{thmMain} lies in controlling  non-wiggly portions of a continuum $K$ which can  have a finite length. If non-wiggly
portions of  $K$ have  infinite length then the assertion that  $\HD(K)>1$ is generally not true. 
As an example take  a unit
segment in the plane accumulated by a smooth curve winding infinitely many times around it.  
Admitting  exception sets allows also for  an immediate extension
of the dimension estimates of Theorem~\ref{thmMain}  over  those disconnected sets $K$ which can be
turned  into  a continuum $K\cup E'$ by  adding  a set $E'$ of finite length.


In a different context,  a conceptually  similar approach was adopted by Koskela and Rohde in their proof that mean porosity replaces porosity for upper estimates of the Hausdorff dimension~\cite{koro}.

A standard observation about Hausdorff dimension is that only
asymptotic properties should intervene in the estimates. Another point  is  that even if  a connected set is not   wiggly at every scale, the property  to wiggle often enough, for example on a set of positive density of scales,  should be sufficient to observe an exponential growth
of  length in every scale.  

Theorem~\ref{thmMain}  generalizes Theorem~1.1  of~\cite{bijo2} in three directions. Firstly, it allows for  an exceptional set $E\subset K$ of finite $1$-dimensional Hausdorff measure. Secondly,
it is enough to  assume that  the continuum  $K$ oscillates at every point $x\in K\setminus E$ and every  scale $r>0$ with some  parameter $\beta_K(x,r)>0 $ which depends both on $x$ and $r>0$
so that 
\begin{equation}\label{mean}
\liminf_{r\rightarrow 0}
\frac {\int_r^{\diam\! K} \beta^2_K(x,t) \frac{dt}{t}}{-\log r}\geq \beta_0^2>0\;.
\end{equation}
This means that the uniform hypothesis  of ~\cite{bijo2} that there exists $\beta_0>0$ so that for every $x\in K$ and every $r<\diam K$,
$$\beta_K(x,r)\geq \beta_0$$
can  replaced by the integral condition $(\ref{mean})$ without affecting 
the main estimate on  $\HD(K)$ of ~\cite{bijo}  that $\HD(K)\geq 1+c\beta_0^2$, where $c$ is a universal constant.  Theorem \ref{thmMain} is also valid in higher ambient dimension.

An additional feature of Theorem~\ref{thmMain} is that  the condition (\ref{mean}) can be further relaxed, 
\begin{equation}\label{mean1}  
\liminf_{r\rightarrow 0}\frac {\int_r^{\diam\! K} \beta^2_K(x,r) \frac{dt}{t}}{-\log r}> 0\;
\end{equation}
and still obtain  a non-uniform estimate that  $\HD(K)>1$.

The main technical ingredient of the paper is Theorem~\ref{thmMainTech}  which claims that for every continuum $K$ of diameter $1$
there exists  a Borel probabilistic measure $\mu$ supported on a wiggly subset $Z$ of $K$  such that for every ball $B(x,r)$, $x\in Z$, 
$$\mu(B(x,r)) ~\leq ~ \frac{c'r}{\diam \!Z} \exp \left( -c\int_r^{\diam K} \beta^2_K(x,t)\frac{dt}{t}\right)\; ,$$
where $c,c'$ are universal constants.
The construction of the measure $\mu$ is based on a  combinatorial
Proposition~\ref{key} and a geometric version of the corona type construction  explained in~\cite{pajo}. 

The study of the distribution of the measure $\mu$ on  Julia sets is of independent interest as the relations between $\mu$ and other natural measures in complex dynamics are not known.
By Corollary~\ref{coro:con},   $\mu$ is absolutely continuous with respect to 1-dimensional Hausdorff measure $\H$ on connected Julia sets and thus $\mu$ is not atomic and $\HD(\mu)\geq 1$. 

\begin{theo}
\label{thmMain}
Suppose a nontrivial compact connected $K\ss\R^d$ where $d\geq 2$
is the union of two subsets  $K=W\cup E$,  $\H(E)<\infty$ and $\H(W)>0$.
\begin{itemize}
\item  If there exists $\be_0>0$ such that for all $x$ in $W$ 
$$\liminf_{r\ra 0}\frac{\int_r^{\diam \!K}\be_K^2(x,t)\frac{dt}t}{-\log r}\geq \be_0^2,$$
then $$\HD(K) \geq 1 + c \beta_0^2, $$
where $c$ is a universal constant.
\item If for all $x\in W$
$$\liminf_{r\ra 0}\frac{\int_r^{\diam \!K}\be_K^2(x,t)\frac{dt}t}{-\log r} > 0,$$
then $$\HD(K) > 1.$$
\end{itemize}
\end{theo}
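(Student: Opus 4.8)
\medskip
\noindent\emph{Proof proposal.} The plan is to deduce the theorem from Theorem~\ref{thmMainTech} together with the mass distribution principle; the only genuine work is to show that the measure produced by Theorem~\ref{thmMainTech} is not swallowed by the finite--length exceptional set $E$. Since Hausdorff dimension is scale invariant, normalise $\diam K=1$ and apply Theorem~\ref{thmMainTech}: this yields a Borel probability measure $\mu$ supported on the set $Z\ss K$ of points at which $K$ is mean wiggly, so that $\int_r^1\beta_K^2(x,t)\,\frac{dt}t\to\infty$ as $r\to0$ for every $x\in Z$, and
$$
\mu(B(x,r))\ \leq\ c'\,r\,\exp\!\Big(-c\int_r^1\beta_K^2(x,t)\,\frac{dt}t\Big)
\qquad(x\in Z,\ r>0).
$$
Note that for every $x\in W$ the hypothesis already forces $\int_r^1\beta_K^2(x,t)\,\frac{dt}t\to\infty$, so $W\ss Z$. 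The mass distribution principle, in the form I shall use it, reads: if $\nu$ is a finite Borel measure with $\nu(F)>0$ and $\nu(B(x,r))\leq Cr^{s}$ for all $x\in F$, $r>0$, then $\mathcal H^{s}(F)>0$; hence it suffices to exhibit such an $F\ss K$ with $s>1$.

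\medskip
\noindent First I would prove the \textbf{key claim}: $\mu(E)=0$. For integers $m\geq2$, $n\geq1$, set $A_{m,n}=\{x\in Z:\int_{1/m}^1\beta_K^2(x,t)\,\frac{dt}t\geq n\}$; since the integrand is nonnegative, $x\in A_{m,n}$ and $0<r<1/m$ imply $\int_r^1\beta_K^2(x,t)\,\frac{dt}t\geq n$, hence $\mu(B(x,r))\leq c'e^{-cn}r$. The $A_{m,n}$ increase with $m$, and since every $x\in Z$ is mean wiggly, $\int_0^1\beta_K^2(x,t)\,\frac{dt}t=\infty$, so $\bigcup_mA_{m,n}=Z$ for each $n$. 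Assume for contradiction that $\mu(E)>0$. For each $n$ choose $m_n$ with $\mu(A_{m_n,n})\geq1-\tfrac12\mu(E)$, so $\mu(E\cap A_{m_n,n})\geq\tfrac12\mu(E)$; since on $E\cap A_{m_n,n}$ the measure $\mu$ has linear growth with constant $c'e^{-cn}$ at all scales below $1/m_n$, the standard covering lemma gives $\H(E)\geq\H(E\cap A_{m_n,n})\geq\mu(E)e^{cn}/(2c')$ for every $n$, contradicting $\H(E)<\infty$. Thus $\mu(E)=0$; as $Z\ss W\cup E$ this gives $\mu(W)=1$ and $Z\sm E\ss W$.

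\medskip
\noindent Next I would convert the integral $\liminf$ into a Frostman estimate and finish. Fix $\varepsilon\in(0,\beta_0^2)$ and $s=1+c(\beta_0^2-\varepsilon)>1$. For each $x\in W$ the first hypothesis provides $\rho(x)>0$ with $\int_r^1\beta_K^2(x,t)\,\frac{dt}t\geq(\beta_0^2-\varepsilon)\log(1/r)$ for $0<r<\rho(x)$, hence $\mu(B(x,r))\leq c'r^{s}$ there. The sets $V_k=\{x\in Z\sm E:\rho(x)\geq1/k\}$ increase to $Z\sm E$, which carries all of $\mu$, so $\mu(V_k)\geq\tfrac12$ for some $k$; then $\nu=\mu|_{V_k}$ satisfies $\nu(B(x,r))\leq c'r^{s}$ for $x\in V_k$, $r<1/k$, and $\nu(B(x,r))\leq1\leq k^{s}r^{s}$ for $r\geq1/k$, so $\nu(B(x,r))\leq C_kr^{s}$ for all $x\in V_k\ss K$ and $r>0$. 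The mass distribution principle then yields $\HD(K)\geq s=1+c(\beta_0^2-\varepsilon)$; letting $\varepsilon\to0$ gives $\HD(K)\geq1+c\beta_0^2$. For the second bullet I would stratify $W=\bigcup_jW^{(j)}$ with $W^{(j)}=\{x\in W:\liminf_{r\to0}(\int_r^1\beta_K^2(x,t)\,\frac{dt}t)/(-\log r)\geq1/j^{2}\}$; since $\mu(W)=1$ and the $W^{(j)}$ increase to $W$, some $W^{(j)}$ has positive $\mu$--mass, and the argument above with $W^{(j)}$ and $1/j$ in place of $W$ and $\beta_0$ produces $\HD(K)\geq1+c/j^{2}>1$.

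\medskip
\noindent The main obstacle is the middle step, $\mu(E)=0$ --- exactly the ``control of the non--wiggly portions of finite length'' promised in the introduction. It rests on two features of the measure of Theorem~\ref{thmMainTech}: that $\mu$ is carried by mean--wiggly points (so the exponential factor beats any fixed power of $r$ on the exhausting sets $A_{m,n}$), and that on those sets $\mu$ has a quantitatively flat, linear upper density; combined with $\H(E)<\infty$ this is what prevents concentration on $E$. The remaining ingredients --- the scaling reduction, the elementary passage from the integral $\liminf$ to a power law for $\mu$, the exhaustions $A_{m,n}$ and $V_k$, and the classical mass distribution principle with its covering lemma --- are routine.
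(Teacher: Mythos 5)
Your proposal is correct and follows essentially the same route as the paper: apply Theorem~\ref{thmMainTech} and then the mass distribution principle. The only substantive difference is that you re-derive $\mu(E)=0$ from the scaling bound together with $\H(E)<\infty$, whereas the paper's Theorem~\ref{thmMainTech} already asserts $\mu(E)=0$ as part of its conclusion (it is built into the construction); your exhaustion arguments (the sets $V_k$ and the stratification $W^{(j)}$) are simply a more explicit rendering of the paper's one-line appeal to the pointwise form of the mass distribution principle via $\mathrm{essup}_{\mu}\beta^2(x)$.
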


The condition $\H(E)<\infty$ can not be further relaxed as shows
the following example (Warsaw sine): Let $K$ be  the closure of the graph $G$ of $y=\sin (1/x)$,
$x\in (0,1]$, in the Euclidean planar topology. The wiggly set $W$ is a vertical segment $\{0\}\times
[-1,1]$, the exceptional set $E$ is the graph $G$ which has an infinite length. Clearly, $\HD(K)=1$.
Further examples showing that all hypotheses of Theorem~\ref{thmMain} are essential
are discussed in Section~\ref{sec:example}.

We have already observed that the hypothesis about connectivity of $K$ can be slightly relaxed.
In fact,  the estimates of Theorem~\ref{thmMain} can be also localized. If $D(x,R)$ is a ball such that $K':= \D(x,R)\cap K \cup  \partial D(x,R)$ is connected we can apply Theorem~\ref{thmMain} for a new  continuum $K'$ and a new exceptional  set  $E'= \D(x,R)\cap E\cup \partial \D(x,R)$. Clearly,
$\HD(K')=\HD(K\cap \D(x,R))$.

On the other hand,  there are  obvious examples of  totally disconnected and uniformly  wiggly compacts  $K\times K$, where $K\subset [0,1]$
is a Cantor set of bounded geometry with  $\HD(K)< \frac{1}{2}$.   
\paragraph{Almost flat sets.}
We will show that the estimates of Theorem~\ref{thmMain} are sharp. 
Recall that a set $N\subset\C$ is $\varepsilon$-porous at $x$ at scale $r>0$  if 
there is $z\in \D(x,r)$ such that  $\D(z,\varepsilon r)\subset \D(x,r)\setminus N$. 
The condition  $\beta_N(x,r)\leq \alpha$ implies that  $N$ is  $(1-\alpha)/2$-porous at scale $r$ at $x$.

If one assumes that for every $x$ in a bounded set $N$,
\begin{equation}\label{equ:inf}
\limsup_{r\ra 0}\frac{\int_r^{\diam \!N}\be_N^2(x,t)\frac{dt}t}{-\log r}=0,
\end{equation}
then  $\HD(N)\leq 1$ follows  from the dimension results  of Belaev and Smirnov for mean porous sets, Corollary~1 in ~\cite{besm}.
The condition  $\beta_N(x,r)=0$ is much stronger than the maximal porosity $1/2$ at $x$. It turns out that one  can replace $\limsup$ by $\liminf$
in the inequality~(\ref{equ:inf}) and still obtain that $\HD(N)\leq 1$. 
\begin{theo}
\label{thminv}
Let $N$ be a set in $\R^d$ with $d\geq 2$ such that for all $x\in N$,
$$\liminf_{r\ra 0}\frac{\int_r^{1}\be_N^2(x,t)\frac{dt}t}{-\log r}\leq \beta_0^2 ,$$
then there is a universal constant $c>0$ such that $$\HD(N) \leq 1+c\beta_0^2 .$$
\end{theo}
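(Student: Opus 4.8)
The plan is to build, for each point $x\in N$, a nested sequence of coverings at a carefully chosen sequence of scales $r_k\to 0$ along which the average wiggliness stays $\le\beta_0^2+\epsilon$, and at each such scale replace the relevant ball by a thin slab (the flatness that small $\beta$ forces) covered by a controlled number of smaller balls. The gain at scale $t$ in going from a ball of radius $t$ to a box of dimensions $\sim t\times(\beta_N(x,t)t)^{d-1}$ is that it can be covered by $\sim\beta_N(x,t)^{-(d-1)}\cdot(\beta_N(x,t))^{d-2}$... more to the point, the Minkowski content of a $\beta t$-neighborhood of a line inside $B(x,t)$, measured by $s$-dimensional mass for $s=1+\tau$, scales like $t^{1+\tau}\cdot\beta^{d-1-?}$; the honest bookkeeping is that going down from scale $t$ to scale $\beta t$ costs a factor roughly $\beta^{-(s-1)}\cdot\beta^{?}$ in the number of balls. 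The point is to choose $\tau=c\beta_0^2$ and show that the product of these factors over a multiplicatively spaced family of scales stays bounded, because $\int \log(1/\beta)$ type sums are controlled by the hypothesis $\int_r^1\beta_N^2(x,t)\,\frac{dt}{t}\le(\beta_0^2+\epsilon)\log(1/r)$ along a subsequence $r_k$.

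First I would quote the elementary geometric lemma: if $\beta_N(x,r)=\beta$, then $N\cap B(x,r)$ lies in a slab of width $2\beta r$ about some line $L$, hence $N\cap B(x,r)$ is covered by $\lesssim \beta^{-(d-1)}$ balls of radius $\beta r$ if $\beta$ is not too small, or more usefully by $\lesssim 1/\rho$ balls of radius $\rho r$ for any $\rho\in[\beta,1]$ lying along $L$, each inheriting the slab structure. Iterating this along scales $t$ with $\beta_N(x,t)$ controlled would give the bound; but since $\beta_N(x,t)$ varies, the clean way is a stopping-time / corona decomposition: fix $x$, and from scale $r$ build the covering by, at each generation, either subdividing a ball $B(y,t)$ into $\sim1/\beta_N(y,t)$ children of radius $\beta_N(y,t)t$ when $\beta_N(y,t)$ is small (a "flat" move), or into a bounded number of children of half the radius when $\beta_N(y,t)$ is of order $1$. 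The $s$-dimensional cost of a flat move is the number of children times $(\text{child radius}/\text{parent radius})^s = \beta^{-(d-1)}\cdot\beta^{s}$; choosing $s=1+\tau$ this is $\beta^{s-(d-1)}$, which for $d=2$ is exactly $\beta^{\tau-? }$... I would instead cover the slab of width $\beta t$ in $B(y,t)$ by $\sim\beta^{-(d-1)}$ balls of radius $\beta t$, but only those meeting $N$; the honest count needed is that the total $s$-mass is multiplied by a factor $\le C\beta^{s-1}$ per unit of $\log$-scale crossed, i.e. $\log(\text{mass ratio})\le (s-1)\log\beta + C$. Summing over a geometric sequence of scales from $1$ down to $r$ and using $\sum\log(1/\beta_N(x,t_j))$ controlled by Cauchy–Schwarz from $\int\beta^2\,dt/t$ gives $\log(\text{total }s\text{-mass at scale }r)\le -(s-1)\cdot c\log(1/r)+(s-1)\cdot(\text{something})\log(1/r)+C\log(1/r)$, and one tunes $\tau=s-1$ so the coefficient of $\log(1/r)$ is nonpositive along $r=r_k$.

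The two technical points I expect to be the real obstacles. First, $\beta_N(x,t)$ can be extremely small (even zero) on large blocks of scales, so a naive geometric grid of scales does not interact well with the averaged hypothesis; one must either pass to the adapted scales $t_j$ where $\beta_N$ is "sampled" (a Whitney-type selection along each $x$) or work with the integral directly by a continuous/telescoping estimate on $\mu_s(B(x,r))$-type quantities, and then the Vitali/Besicovitch covering lemma is needed to pass from the pointwise family of coverings $\{B(x,r_k(x))\}$ to a genuine cover of $N$ of total $s$-content $\le\epsilon$, forcing $\HD(N)\le s$. Second, and this is the crux, the averaged hypothesis only holds along a $\liminf$, i.e. along \emph{some} sequence $r_k(x)\to0$ depending on $x$, not for all small $r$; so the covering of $N$ must be assembled from balls of wildly different radii $r_k(x)$, and controlling overlaps while keeping the total $s$-content small requires the standard trick of first covering $N$ at a fixed small scale $\delta$, then refining only inside each $\delta$-ball using the point $x$ in it with $r_k(x)<\delta$, and summing a geometric-type series in $\delta$. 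Once these covering-lemma bookkeeping steps are in place, letting $\epsilon\to0$ gives $\HD(N)\le 1+c\beta_0^2$ with $c$ the reciprocal of the constant produced by the Cauchy–Schwarz step.
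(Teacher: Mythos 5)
Your covering scheme contains a genuine quantitative gap that prevents it from reaching the bound $1+c\beta_0^2$. Every move in your generation-by-generation construction --- whether a ``flat'' move from radius $t$ to radius $\beta t$ with $\sim\beta^{-1}$ children, or a ``wiggly'' halving with a bounded number of children --- loses a fixed multiplicative constant $C>1$ in the $s$-content, coming from overlaps and from the passage between the ball $B(y,t)$ and the slab. A flat move only advances $\log(1/\beta)$ units of logarithmic scale, so if $\beta\sim\beta_0$ on most scales (which is all the hypothesis $\int_r^1\beta^2\,dt/t\le(\beta_0^2+\eps)\log(1/r)$ forbids you from excluding), the number of moves needed to reach scale $r$ is $\gtrsim\log(1/r)/\log(1/\beta_0)$ and the accumulated loss is $C^{\log(1/r)/\log(1/\beta_0)}=r^{-\log C/\log(1/\beta_0)}$. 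Your scheme therefore yields at best $\HD(N)\le 1+C/|\log\beta_0|$, which is the mean-porosity-type bound (compare Beliaev--Smirnov, quoted after Theorem~\ref{thminvmean}) and is much weaker than $1+c\beta_0^2$ as $\beta_0\ra 0$. Moreover, the step where you propose to control $\sum_j\log(1/\beta_N(x,t_j))$ by Cauchy--Schwarz from $\int\beta^2\,dt/t$ is false: $\log(1/\beta)$ blows up precisely where $\beta^2$ is small (and $\beta$ may vanish on a set of scales of positive measure), so no such control exists; in fact $\sum_j\log(1/\beta_j)$ is forced to equal $\log(1/r)$ if your moves tile the scales, and the real issue is counting the moves, as above.

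The missing ingredient is the Bishop--Jones ``Analyst's TSP'' input, Theorem~\ref{thmBJ97}, which is exactly the device that converts the per-scale loss at a flat scale into a factor $e^{C\beta^2}=1+O(\beta^2)$ rather than a constant. The paper's proof decomposes $N=\bigcup_{n\ge n_0}X_n$ with $X_n=\{x\in N:\int_{2^{-n}}^1\beta_N^2(x,t)\,\frac{dt}{t}\le(\eps+\beta_0^2)n\log 2\}$ (this also disposes of your $\liminf$ issue: each point is simply covered at one of its own good scales), covers $X_n$ by $M$ Besicovitch families of pairwise disjoint balls of radius $2^{-n}$ centered in $X_n$, and observes that each set $Z_i$ of centers is $2^{-n}$-separated, so $\int_0^{\diam Z_i}\beta_{Z_i}^2(x,t)\,\frac{dt}{t}\le(\eps+\beta_0^2)n\log 2$ pointwise. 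Theorem~\ref{thmBJ97} then places $Z_i$ on a rectifiable curve $\Gamma_i$ of length at most $Ce^{C(\eps+\beta_0^2)n\log 2}$, and disjointness of the balls centered on $\Gamma_i$ bounds their number by $\lesssim 2^{n}\,\H(\Gamma_i)$, whence $\sum_B(\diam B)^{1+\alpha}\lesssim 2^{-n(\alpha-C(\eps+\beta_0^2))}$; summing over $n\ge n_0$ for $\alpha>2C(\eps+\beta_0^2)$ gives $\mathcal{H}^{1+\alpha}(N)<\infty$. If you want to keep a self-contained argument you would have to reprove the exponential length bound of Theorem~\ref{thmBJ97}, which is itself an $L^2$/Cauchy--Schwarz argument far more delicate than the per-generation counting you outline.
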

\begin{proof}
Without loss of generality we may assume that $\diam N=1$.
Fix $\epsilon >0$ and $n_0>1$. For every $n\geq n_0$, denote
$$ X_{n}=\{x \in N: \int_{2^{-n}}^{1}\be_N^2(x,t)\frac{dt}{t}\leq (\epsilon +\beta_0^2) n \log 2\}\;.$$
By definition, $N=\bigcup_{n\geq n_0} X_{n}$.  
By Besicovitch's covering theorem, we can find  $M$  subcollections ${\cal G}_i$ such that every two balls from the same subcollection
${\cal G}_i$ are disjoint, every ball is of radius  $2^{-n}$, and  $X_n$ is covered by  the balls from ${\mathcal G}(n)=\bigcup_{i\leq M}{\mathcal G}_i$. 
Let $Z_i$ be the set of the centers of the balls from  ${\mathcal G}_i$. It is a finite set with the property that
$$\int_{0}^{\diam \!Z_i}\be_{Z_i}^2(x,t)\frac{dt}{t} \leq 
\int_{2^{-n}}^{1}\be_N^2(x,t)\frac{dt}{t}
\leq  (\epsilon+\beta_0^2) n \log 2\;.$$
By Theorem~\ref{thmBJ97}, for every $i\leq M$, the set $Z_i$ is contained in a curve $\Gamma_i$ 
of length $\H(\Gamma_i)\leq Ce^{C(\epsilon +\be_0^2)n \log 2}$, where $C>1$ is a universal constant.  Using the fact that the balls from ${\mathcal G}_i$ are disjoint, we have that
$$\sum_{B\in {\cal G}_i} (\diam B)^{1+\alpha} \leq 2^{(-n+1)\alpha}\; \H(\Gamma_i)\leq C_1
2^{-n(\alpha -C(\epsilon+\beta_0^2))}\;.$$
Hence, the sum of the diameters to the power $1+\alpha$ of the balls from ${\mathcal G}(n)$ is smaller than
$$ M C_12^{-n(\alpha -C(\epsilon+\beta_0^2))}\;.$$ 

Let ${\cal G}=\bigcup_{n\geq n_0}{\mathcal G}(n)$. Assuming that $\alpha$ is bigger than $2 C(\epsilon+\beta_0^2)$, we obtain that
$$\sum_{B\in {\mathcal G}} (\diam B)^{1+\alpha}\leq \sum_{n=n_0}^{\infty} M C_1 2^{-n(\alpha -C(\epsilon+\beta_0^2))} \leq C' 2^{-n_0 C\beta_0^2}\;,$$
where $C'$ is a  constant. Passing with $n_0$ to $+\infty$, we infer  that ${\mathcal H}^{1+\alpha}(N)<+\infty$ which completes the proof.

\end{proof}

\paragraph{Invariance property.} Suppose that $K$ is a continuum which satisfies the hypothesis of Theorem~\ref{thmMain} with $E=\emptyset$. Let $\mathcal{S}_K$ be the set of all continuous functions  $h:U\mapsto \C$, defined on some 
neighborhood $U$ of $K$, conformal with the Jacobian  different from
zero on $K\setminus F\not = \emptyset$ and $\H(h(F))=0$. We recall that $f:U\mapsto \C$ is conformal at $z_0\in U $
if the limit $(f(z)-f(z_0))/(z-z_0)$ exists and is different from $0$. Then, 
$$\inf_{h\in \mathcal{S}_K}\HD(h(K))\geq 1+ c\beta_0^2\;.$$
Indeed, $h$ does not decrease $\beta_K(z)$ at any point $z\in K\setminus F$. Since $h(K)$ is a continuum, the assumption that $\H(h(F))=0$ implies that $h(K\setminus F)$
is a  wiggly part of $h(K)$, see Theorem~\ref{thmMean},  of positive length.

\paragraph{Generalizations are possible.}
A direction for possible generalizations
can be adopted following~\cite{guy2} where a uniform non-flatness of compacts with respect to $d$-dimensional planes is studied. Under some additional topological assumptions which replace connectedness, it is proved that the Hausdorff dimension of uniformly non-flat compacts in $\R^n$ is strictly bigger than $d$.

\subsection{Mean wiggly continua}
The integral condition $(\ref{mean})$ has a discrete counterpart. Let $K$
be a  set in $\R^d$ of diameter $1$ and fix $\lambda\in (0,1)$.  For every $x\in K$ and $m\in \N$,
if $\beta(x,\lambda^{-m})>c_0$ then $\beta(x,\lambda^{m-1})>\lambda c_0 $. Hence,  if $r\in A_m=[\lambda^{-m-1}, \lambda^{-m})$,
$m\in \N$, then the limit 
\begin{equation}\label{mean3}
\liminf_{r\ra 0}\frac{1}{-\log r} \int_r^{1} \be_K^2(x,t)\frac{dt}{t}
\end{equation}
is equivalent to 
\begin{equation}\label{mean2}
\liminf_{m\ra \infty} \frac{1}{m} \sum_{i=0}^{m-1} \sup_{t\in A_i}  \beta^2(x,t)
\sim \liminf_{m\ra \infty} \frac{1}{m} \sum_{i=0}^{m-1}   \beta^2(x, \lambda^{-i}) \;.
\end{equation}

We will say that a  set $K\subset \R^d$ is {\em mean wiggly} at a point $x\in K$ with parameters $\lambda, \kappa, \beta_0 \in (0,1)$ if there is  $r(x) > 0$ such that for $\lambda^{n} < r(x)$ the number of wiggly  scales 
$\lambda^{m}, m < n$, is greater than $\kappa n$. Here, a wiggly  scale is defined by the condition that $\beta(x, \lambda^{m}) \geq \beta_0 $

In particular, the limit~(\ref{mean2})  is positive iff  $K$ is mean wiggly with some positive
parameters at $x$.
%
The hypothesis of Theorem~\ref{thmMain} can be reformulated in terms of mean density of wiggly scales.
\begin{theo}\label{thmMean}
Suppose that $K\ss\R^d$ with $d\geq 2$ is a continuum of diameter $1$ and $K=W\cup E$ where  $W$ and $E$ 
satisfy the following:
\begin{itemize}
\item  $\H(E)<\infty$ and $\H(W)>0$.
\item  $K$ is mean wiggly at every point $x\in W$.
\end{itemize}
Then $$\HD(K)>1\;.$$

If additionally, the parameters of the wiggliness $\lambda, \kappa, \beta_0$  at $x\in W$ are uniform, that is do not depend on $x\in W$, then
$$\HD(K) \geq 1 + c'  \lambda^4 \beta_0^2 ~\kappa, $$
where $c'$ is a universal constant.
\end{theo}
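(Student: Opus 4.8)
The plan is to reduce Theorem~\ref{thmMean} to Theorem~\ref{thmMain} by showing that the discrete ``mean wiggly'' hypothesis is equivalent, up to universal constants, to the integral lower bound appearing in~(\ref{mean}) and~(\ref{mean1}). Concretely, suppose $K$ is mean wiggly at $x\in W$ with parameters $\lambda,\kappa,\beta_0$. By definition, for $\lambda^n<r(x)$ there are at least $\kappa n$ indices $m<n$ with $\beta_K(x,\lambda^m)\geq\beta_0$. The elementary monotonicity remark already recorded in the excerpt --- if $\beta(x,\lambda^{-m})>c_0$ then $\beta(x,\lambda^{m-1})>\lambda c_0$ --- shows that a wiggly scale $\lambda^m$ forces $\beta_K(x,t)\gtrsim\lambda\beta_0$ on the whole annulus $A_m=[\lambda^{m+1},\lambda^m)$, which has $\int_{A_m}\frac{dt}{t}=\log(1/\lambda)$ comparable to a fixed constant. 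Summing over the at least $\kappa n$ wiggly annuli contained in $[\lambda^n,1]$ gives
\begin{equation}\label{eq:meanwig-int}
\int_{\lambda^n}^{1}\beta_K^2(x,t)\,\frac{dt}{t}\ \geq\ \kappa n\cdot\lambda^2\beta_0^2\cdot\log(1/\lambda),
\end{equation}
so that, dividing by $-\log\lambda^n=n\log(1/\lambda)$ and letting $n\to\infty$,
$$\liminf_{r\to 0}\frac{\int_r^{1}\beta_K^2(x,t)\,\frac{dt}{t}}{-\log r}\ \geq\ \kappa\lambda^2\beta_0^2\ >\ 0.$$

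With this inequality in hand, both conclusions follow immediately. For the first part, mean wigglyness at every $x\in W$ (with possibly $x$-dependent parameters) gives a strictly positive $\liminf$ at every point of $W$, which is exactly hypothesis~(\ref{mean1}) of the second bullet of Theorem~\ref{thmMain}; hence $\HD(K)>1$. For the quantitative statement, if the parameters $\lambda,\kappa,\beta_0$ are uniform over $W$, then~(\ref{eq:meanwig-int}) supplies the uniform constant $\beta_0'^2=\kappa\lambda^2\beta_0^2$ in the first bullet of Theorem~\ref{thmMain}, giving $\HD(K)\geq 1+c\,\kappa\lambda^2\beta_0^2$. The exponent $\lambda^4$ rather than $\lambda^2$ in the statement comes from being slightly more careful about the comparison between $\sup_{t\in A_i}\beta^2$ and $\beta^2(x,\lambda^{-i})$ in~(\ref{mean2}): propagating the lower bound for $\beta$ across an annulus costs a factor $\lambda$ in $\beta$, hence $\lambda^2$ in $\beta^2$, and if one insists on evaluating $\beta$ at a \emph{single} scale inside each annulus (rather than taking the supremum) one pays this cost twice, once going down to the annulus and once within it; I would simply track constants honestly and absorb the discrepancy, since the universal constant $c'$ is not claimed to be optimal.

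The only genuine point requiring care --- and the step I expect to be the main obstacle --- is the passage from ``$\beta_K(x,\lambda^m)\geq\beta_0$ at the discrete scale $\lambda^m$'' to ``$\beta_K(x,t)\geq\lambda\beta_0$ for all $t\in[\lambda^{m+1},\lambda^m)$.'' This uses the quasi-monotonicity of $\beta$-numbers under shrinking the radius: if a line $L$ fails to approximate $K\cap B(x,\lambda^m)$ well, it fails even at the smaller scale $t<\lambda^m$ after rescaling, because the same witnessing point of $K$ now sits at relative distance at least $\beta_0\lambda^m/t\geq\beta_0$ from every line, and since $t\geq\lambda^{m+1}$ one has $\lambda^m/t\geq\lambda$, giving the bound $\beta_K(x,t)\geq\lambda\beta_0$. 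One must check this is uniform and does not degrade as $t\to\lambda^{m+1}$; it does not, since the worst case is exactly $t=\lambda^{m+1}$, which yields the clean factor $\lambda$. Everything else is bookkeeping: counting that at least $\kappa n$ of the annuli $A_0,\dots,A_{n-1}$ are wiggly, that these annuli are disjoint with equal logarithmic mass $\log(1/\lambda)$, and that the resulting Riemann-type sum bounds the integral from below. No new measure-theoretic or combinatorial input beyond Theorem~\ref{thmMain} is needed.
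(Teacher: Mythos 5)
Your overall strategy --- converting the discrete density of wiggly scales into the integral condition of Theorem~\ref{thmMain} by summing over dyadic-type annuli --- is exactly the paper's, and the bookkeeping (disjoint annuli of logarithmic mass $\log(1/\lambda)$, density at least $\kappa$) is fine. But the step you yourself single out as the crux is stated in the wrong direction, and as stated it is false. The quasi-monotonicity of $\beta$-numbers is
$$\beta_K(x,r)\ \geq\ \frac{t}{r}\,\beta_K(x,t)\qquad (t\leq r),$$
because $K\cap B(x,t)\subseteq K\cap B(x,r)$, so any line that approximates the larger intersection also approximates the smaller one. Hence a lower bound $\beta_K(x,\lambda^m)\geq\beta_0$ propagates \emph{upward}, to radii $t\geq\lambda^m$, with loss $\lambda^m/t$; it gives no information at radii $t<\lambda^m$. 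Your claim that ``the same witnessing point of $K$ now sits at relative distance at least $\beta_0\lambda^m/t$ from every line'' fails because that witnessing point need not belong to $B(x,t)$ at all. Concretely, let $K$ be the union of $[-1,1]\subset\R\subset\C$ with a vertical spike of height $\tfrac12\lambda^m$ attached at the point $0.9\lambda^m$: then $\beta_K(0,\lambda^m)\gtrsim 1$, yet $\beta_K(0,t)=0$ for all $t\leq 0.8\lambda^m$, so $\beta_K(x,t)\geq\lambda\beta_0$ on $[\lambda^{m+1},\lambda^m)$ is simply wrong, even for continua.

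The repair is the paper's version of the same computation, and it also explains the exponent $4$ that you were trying to rationalize: a wiggly scale $s\in A_{i+1}=[\lambda^{i+2},\lambda^{i+1})$ controls the annulus \emph{above} it, namely for $t\in A_i=[\lambda^{i+1},\lambda^i)$ one has $s\leq t$ and hence
$$\beta_K(x,t)\ \geq\ \frac{s}{t}\,\beta_0\ \geq\ \frac{\lambda^{i+2}}{\lambda^{i}}\,\beta_0\ =\ \lambda^2\beta_0,$$
so $\int_{A_i}\beta_K^2(x,t)\frac{dt}{t}\geq\lambda^4\beta_0^2\log(1/\lambda)$. Reindexing each wiggly scale to the annulus one level up shifts the count by one and does not change the asymptotic density $\kappa$, so the liminf in~(\ref{mean3}) is at least $\kappa\lambda^4\beta_0^2$ and Theorem~\ref{thmMain} applies as you intended. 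With this one-line correction your argument coincides with the paper's proof; without it, the central inequality~(\ref{eq:meanwig-int}) has no valid justification.
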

\begin{proof}
By Theorem \ref{thmMain}, it is enough to estimate from below the limit~$(\ref{mean3})$
for all $x\in W$.  
For every integer $i\geq 0$, define $A_i =[\lambda^{i+1}, \lambda^{i})$. Set $\chi_i=1$
if $K$ is  $\beta_0$-wiggly at $x\in W$
at some scale from $ A_{i}$. If $\chi_{i+1}=1$ then
$$
\int_{A_{i}}\be_K^2(x,r)\frac{dt}{t}\;
\geq \; \int_{\lambda^{i+1}}^{\lambda^{i}} 
\left(\beta_0\frac{\lambda^{i+2}}{t}\right)^2 \frac{dt}{t}\;  \geq \; 
\lambda^4 \beta_0^2 \log \lambda^{-1} \;.$$

Therefore, the lower bound of the limit $(\ref{mean3})$ is given by 
$$\lambda^4 \beta_0^2 \log \lambda^{-1}\; \liminf_{n\rightarrow \infty}  \frac{\# \{\chi_i=1: i\in [0,n+1)\} }{(n+1)\log \lambda^{-1} }
\geq  \kappa \lambda^4 \beta_0^2  \;.$$
\end{proof}
\paragraph{Almost flat sets.}
A  set $K\subset \R^d$ is {\em almost flat} at a point $x\in K$ with parameters $\lambda, \kappa, \beta_0 \in (0,1)$ if there is a sequence of positive integers $(N_i)$   such that for
every $i\in \N$  the number of flat  scales 
$\lambda^{m}, m < N_i$, is greater than $\kappa N_i$.  A flat scale is defined by the condition 
$\beta(x, \lambda^{m}) \leq \beta_0 $.
\begin{theo}\label{thminvmean}
Suppose that $K\subset \R^d$, $d\geq 2$, is a set of diameter $1$ and $K$ is almost  flat at every point $x\in K$ with the parameters $\lambda, \kappa, \beta_0$  that  do not depend on $x\in W$.
Then, 
$$\HD(K) \leq 1  + c'\lambda^{-4}(1-\kappa+  \beta_0^2 ~\kappa), $$
where $c'$ is a universal constant.
\end{theo}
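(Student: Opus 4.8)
The plan is to obtain Theorem~\ref{thminvmean} as a discretised corollary of the integral estimate of Theorem~\ref{thminv}, in the same spirit in which Theorem~\ref{thmMean} was extracted from Theorem~\ref{thmMain}. Precisely, I would show that the almost-flatness hypothesis forces, for \emph{every} $x\in K$,
$$\liminf_{r\ra 0}\frac{\int_r^{1}\beta_K^2(x,t)\frac{dt}{t}}{-\log r}\ \leq\ (1-\kappa)+\kappa\,\frac{\beta_0^2\br{\lambda^{-2}-1}}{2\log\lambda^{-1}}\;,$$
with a bound independent of $x$. Granting this, Theorem~\ref{thminv} applied to $N=K$ gives $\HD(K)\leq 1+cB$ with $B$ the right-hand side above, and it only remains to check that $B$ is dominated by $\lambda^{-4}\br{1-\kappa+\beta_0^2\kappa}$ up to a universal factor.

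For the pointwise estimate I would proceed as follows. We may assume the sequence $(N_i)$ witnessing almost-flatness at $x$ increases to $\infty$. Fix $n=N_i$ and split the scales into $\lambda$-adic shells $A_m=[\lambda^{m+1},\lambda^m)$, $0\leq m<n$, so that $\int_{\lambda^n}^{1}\beta_K^2(x,t)\frac{dt}{t}=\sum_{m<n}\int_{A_m}\beta_K^2(x,t)\frac{dt}{t}$. Two per-shell bounds suffice. First, $\beta_K(x,t)\leq 1$ always, so any shell contributes at most $\log\lambda^{-1}$. Second, if $\lambda^m$ is a flat scale then, since $B(x,t)\subseteq B(x,\lambda^m)$ for $t\in A_m$, a line nearly realising $\beta_K(x,\lambda^m)$ still $\beta_0\lambda^m$-approximates $K\cap B(x,t)$, so $\beta_K(x,t)\leq\beta_0\lambda^m/t$ and $\int_{A_m}\beta_K^2(x,t)\frac{dt}{t}\leq\frac{\beta_0^2}{2}\br{\lambda^{-2}-1}$. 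Hence a flat shell contributes at most $m_1:=\min\!\big(\tfrac{\beta_0^2}{2}(\lambda^{-2}-1),\log\lambda^{-1}\big)$ and an arbitrary shell at most $m_2:=\log\lambda^{-1}$, with $m_1\leq m_2$. As at least $\kappa n$ of the indices $m<n$ are flat, $\sum_{m<n}\leq n\,m_2-\kappa n\,(m_2-m_1)$; dividing by $-\log\lambda^n=n\,m_2$ and letting $i\ra\infty$ yields the displayed bound. It matters here that flatness at scale $\lambda^m$ only controls $\beta_K(x,t)$ for $t\leq\lambda^m$, which is precisely why the shell $A_m$, lying below $\lambda^m$, is the one charged to that scale.

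The remaining step is elementary calculus: writing $\dfrac{\beta_0^2(\lambda^{-2}-1)}{2\log\lambda^{-1}}=\lambda^{-4}\beta_0^2\cdot\dfrac{\lambda^{2}-\lambda^{4}}{2\log\lambda^{-1}}$, the function $\lambda\mapsto\dfrac{\lambda^2-\lambda^4}{2\log\lambda^{-1}}$ is continuous and positive on $(0,1)$ with limits $0$ as $\lambda\ra 0^+$ and $1$ as $\lambda\ra 1^-$, hence bounded by a universal $C_0$; combined with $1-\kappa\leq\lambda^{-4}(1-\kappa)$ this gives $B\leq(1+C_0)\lambda^{-4}\br{1-\kappa+\beta_0^2\kappa}$, and Theorem~\ref{thminv} then finishes the proof with $c'=c(1+C_0)$.

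I do not expect a deep obstacle here---the substance of the result sits in Theorem~\ref{thminv}, and what is added is purely combinatorial and computational. The only points demanding care are the bookkeeping near $\lambda\ra 0$, where the naive flat bound $\beta_0\lambda^m/t$ may exceed $1$ on part of $A_m$ and must be truncated by the trivial bound (the reason for the $\min$), and the verification that the factor $(\lambda^{-2}-1)/\log\lambda^{-1}$, which does diverge as $\lambda\ra 0$, is nevertheless absorbed by the $\lambda^{-4}$ already present in the target (this is exactly the calculus lemma above). One should also keep in mind that the parameter $\beta_0$ of Theorem~\ref{thminvmean} is unrelated to the ``$\beta_0$'' appearing in Theorem~\ref{thminv}: the latter is applied with the explicit number $B$ in place of its $\beta_0^2$.
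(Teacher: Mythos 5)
Your proposal is correct and follows exactly the route the paper intends: its stated proof is "follows immediately from Theorem~\ref{thminv} and a short calculation very much the same as in the proof of Theorem~\ref{thmMean}," and your shell-by-shell bookkeeping (charging the shell $A_m$ below a flat scale $\lambda^m$, truncating by the trivial bound $\beta\leq 1$, and absorbing the factor $(\lambda^{-2}-1)/\log\lambda^{-1}$ into the $\lambda^{-4}$) is precisely that calculation, carried out with the correct downward direction of propagation of flatness.
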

\begin{proof}
The proof follows immediately from Theorem~\ref{thminv} and a short calculation very much 
the same as in the proof of Theorem~\ref{thmMean}.
\end{proof}
If for every $x\in K$, the sequence $(N_i)$ from the hypotheses of Theorem~\ref{thminvmean} 
contains the set of almost all  positive integers,  then 
Corollary~1 in~\cite{besm} (see also~\cite{salli,matilla}))  implies  that
$$\HD(K)<d-\kappa +\frac{C}{|\log \beta_0|}\;,$$
where  $C$ is a universal constant. 

\paragraph{Non-porous continua.}
Sharp estimates of the Hausdorff dimension usually require that a set has a self-similar or at least a well-defined hierarchical structure.  If this additional structure is present
then a  relative  geometrical data (scalings)  leads to useful dimension estimates,
see  for  example ~\cite{area} for applications  of this technique in the complex dynamics. When the hierarchical structure of the set is missing, the dimension estimates become difficult.  Our objective is to prove sharp lower bounds  for the Hausdorff dimension under mild topological  assumptions   as connectivity,  compactness, and relative density.  
In Section~\ref{sec:com},  we further discuss possible applications of our techniques in the case of compact sets.


Let $\epsilon\in (0,1/2)$ and $\lambda \in (0,1)$.  The set $K$ is not $\epsilon$-porous at $z\in K$ at scales of density $\kappa>0$ if 
$$\liminf_{m\rightarrow\infty}\frac{1}{m} 
\mbox{$\#\{n\in (0,m): K$ is not $\epsilon$-porous at scale 
$\lambda^{n}$ at $z$}\}\geq \kappa\;.$$

\begin{theo}\label{theo:nonpor}
Let $K$ be a continuum in $R^{d}$, $d\geq 2$. Suppose that $K=W\cup E$ such that $\H(W)>0$ and $\H(E)<\infty$.
If there are  positive numbers $\lambda, \epsilon, \kappa \in (0,1)$ 
such that for every $z\in W$ the set $K$ is not $\epsilon$-porous at $z$ at scales of density $\kappa$,
then
$$\HD(K)\geq 1+\kappa \left(d-1-\frac{C}{|\log \epsilon|}\right),$$
where $C>0$ is a constant depending only on $\la$ and $d$.
\end{theo}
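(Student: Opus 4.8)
The plan is to reduce non-porosity to mean wiggliness and then invoke Theorem~\ref{thmMean}. The key elementary observation is the converse of the remark made before Theorem~\ref{thminv}: if $K$ is \emph{not} $\epsilon$-porous at $z$ at scale $\lambda^n$, then every ball $\D(w,\epsilon\lambda^n)\subset \D(z,\lambda^n)$ meets $K$, which forces $K\cap \D(z,\lambda^n)$ to be $\epsilon$-dense in $\D(z,\lambda^n)$; in particular it cannot lie within distance $\eta\lambda^n$ of any line $L$ once $\eta$ is small compared to $\epsilon$ (in dimension $d\geq 2$ a line misses a definite fraction of the ball). Quantitatively one gets $\beta_K(z,\lambda^n)\geq c_1(d)\,\epsilon$ for some dimensional constant $c_1$ — actually we want a bound that degrades only like $\epsilon$, or perhaps we must be slightly more careful and track the dependence, but the point is that non-$\epsilon$-porosity at a scale is \emph{at least as strong} as $\beta_0$-wiggliness at that scale with $\beta_0\asymp \epsilon$. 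Hence if $K$ is not $\epsilon$-porous at $z$ at scales of density $\kappa$, then $K$ is mean wiggly at $z$ with parameters $(\lambda,\kappa',\beta_0)$ where $\kappa'$ is comparable to $\kappa$ and $\beta_0$ is comparable to $\epsilon$.

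With this in hand, the hypotheses of Theorem~\ref{thmMean} are satisfied: $K=W\cup E$ with $\H(E)<\infty$, $\H(W)>0$, and $K$ mean wiggly at every $x\in W$ with uniform parameters. Therefore $\HD(K)\geq 1+c'\lambda^4\beta_0^2\kappa'$. Substituting $\beta_0\asymp\epsilon$ and $\kappa'\asymp\kappa$ gives $\HD(K)\geq 1+c''(\lambda,d)\,\epsilon^2\kappa$. But the theorem claims the sharper bound $\HD(K)\geq 1+\kappa\bigl(d-1-\tfrac{C}{|\log\epsilon|}\bigr)$, which for small $\epsilon$ is essentially $1+\kappa(d-1)$ — far better than $1+c\epsilon^2\kappa$. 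So the crude reduction through $\beta$-numbers loses too much, and the real proof must go back to the measure-theoretic machinery: rather than bounding $\mu(\D(x,r))$ via $\exp(-c\int\beta^2\,dt/t)$, one needs a bound of the form $\mu(\D(x,r))\lesssim r\exp(-c(d-1)\,\#\{\text{non-porous scales}\})$, exploiting that at a non-$\epsilon$-porous scale the continuum fills a full $d$-dimensional ball and the corona construction of Theorem~\ref{thmMainTech} can be run to lose a factor $\epsilon^{d-1}$ (volume, not a single Pythagorean increment) per such scale. Then a mass-distribution argument with the density-$\kappa$ hypothesis yields the exponent $1+\kappa(d-1-C/|\log\epsilon|)$.

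So the structure I would write is: (i) state and prove the local lemma that non-$\epsilon$-porosity at scale $\lambda^n$ at $z$ implies $K\cap\D(z,\lambda^n)$ contains, after rescaling, an $\epsilon$-net of the unit ball, hence roughly $\epsilon^{-(d-1)}$ ``well-separated'' pieces of $K$ that each carry length because $K$ is connected; (ii) feed this into the combinatorial Proposition~\ref{key} / corona construction behind Theorem~\ref{thmMainTech} to build a probability measure $\mu$ on a positive-length subset $Z$ of $W$ with $\mu(\D(x,r))\leq c'r\,\prod_{\text{non-porous }n}\epsilon^{d-1-C/|\log\epsilon|}$ over the relevant scales between $r$ and $1$; (iii) use the density hypothesis to convert the product into $\exp\!\bigl(-\kappa(d-1-C/|\log\epsilon|)\log(1/r)\bigr)$ for $r$ small along a subsequence, i.e. $\mu(\D(x,r))\lesssim r^{\,1+\kappa(d-1-C/|\log\epsilon|)}$; (iv) apply the mass distribution principle (Frostman) to conclude.

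The main obstacle, as the above makes clear, is step (ii): one cannot simply cite Theorem~\ref{thmMean} as a black box because its $\epsilon^2$-type conclusion is too weak for the sharp exponent. The delicate point is running the corona/stopping-time construction so that at each non-porous scale one harvests the full $d$-dimensional packing gain $\sim(d-1)\log(1/\epsilon)$ rather than the one-dimensional $\beta^2$ gain; this requires redoing the mass-splitting estimate in Proposition~\ref{key} with the geometric input from step (i) in place of a single triangle, and carefully tracking how the overlap losses (the Besicovitch/bounded-multiplicity constants and the connectivity bookkeeping) contribute the $C/|\log\epsilon|$ error term. Everything else — the reduction, the density averaging, and the Frostman conclusion — is routine once that estimate is established.
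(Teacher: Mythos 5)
You correctly diagnose that the cheap route through Theorem~\ref{thmMean} only yields $1+c\epsilon^2\kappa$, and that one must instead rerun the corona construction of Theorem~\ref{thmMainTech} so that each non-$\epsilon$-porous scale contributes a $d$-dimensional packing gain of order $\epsilon^{d-1}$ rather than a one-dimensional $\beta^2$ increment; this is exactly the paper's strategy. But your proposal stops at announcing that estimate: your step (ii) is flagged as ``the main obstacle'' and never carried out, so there is a genuine gap. For the record, the execution is both simpler and located elsewhere than you suggest: Proposition~\ref{key} is not modified at all. The paper keeps the wiggly machinery (the stopping times $t_B(x)$ and Lemma~\ref{lemTen}) only for those corona balls $B$ in which $K$ \emph{is} $\epsilon'$-porous; at a non-porous ball it simply overrides the stopping time by setting $t_B(x):=\epsilon' R_B$ for every $x\in K_B$. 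Since $K$ then meets every ball of radius $\epsilon' R_B$ inside $B$, a covering/volume count gives $|X(B)|\gtrsim \epsilon'^{-d}$ disjoint children, so each child receives mass at most $\frac{t_B(x)}{R_B}\cdot C_d\,\epsilon'^{d-1}$ --- the extra factor $\epsilon'^{d-1}$ per non-porous step, with the $11^{d+1}$-type overlap constant accounting for only part of the $C/|\log\epsilon|$ error.

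The more serious omission is that you never reconcile the two scale hierarchies. The hypothesis counts non-porous scales along the fixed sequence $\lambda^m$, whereas the corona construction produces irregular radii $R_{B_n}$ dictated by the stopping times, and your step (iii) tacitly identifies the two. To convert ``density $\kappa$ among the $\lambda^m$'' into ``a proportion $\gtrsim\kappa/s$ of non-porous corona steps'' the paper needs: (a) an enlarged porosity constant $\epsilon'=2e^{2M}\epsilon/\lambda^2$; and (b) the observation that if a corona step is porous, so that $\int_{R_{n+1}}^{R_n}\beta_K^2(x,t)\,\frac{dt}{t}=M$, while some non-$\epsilon$-porous scale $\lambda^m$ falls in $[R_{n+1},R_n]$, then $\beta_K(x,t)\geq 1-\epsilon'$ for all $t\in[e^{-2M}\lambda^m,\lambda^m]$, which would make the integral exceed $M$ unless $R_n>e^{-2M}\lambda^m$; this bounds by $s\sim\log\epsilon/\log\lambda$ the number of $\lambda$-scales that can collapse onto a single corona step. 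This at-most-$s$-to-one counting map is the dominant source of the $C(\lambda,d)/|\log\epsilon|$ correction in the exponent (via $\log\epsilon'/\log\epsilon=1+O(1/|\log\epsilon|)$), not the Besicovitch overlap losses you cite. Without (a)--(b), the density hypothesis does not transfer to the measure you construct, and the mass-distribution step does not close.
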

In Section~\ref{sec:theo5}, we will provide examples showing that all hypotheses of Theorem~\ref{theo:nonpor} are essential.


\subsection{Compact sets}\label{sec:com}
The mass  distribution principle is one of the basic techniques
for Hausdorff dimension. The method is however not direct as
one needs to construct   a probability measure $\nu$ supported on $K$  with suitable scaling properties to get  lower bounds of $\HD(K)$,
$$
\nu(B(x,r))\leq r^s \;\;\; \mbox{ for all $x\in K$ and all $r>0$} \Longrightarrow  
\HD(K)\geq s\; .
$$

For self-similar fractals, as one-third Cantor set, or more generally sets of ``bounded geometry'', 
the method leads to  precise estimates of Hausdorff dimension. In general, for more complicated  sets which show an ``unbounded geometry'',  the mass distribution principle
encounters difficulties  as the geometry  of a set is not controlled in every scale, as it is the case
for self-similar fractals,  but only in some and often scarcely distributed  scales. 

We propose a direct geometric method to produce universal lower bounds for Hausdorff dimension of compact sets. To this aim,  we will define a notion of {\em convex density}. 

\begin{defi}
Let $K$ be a subset of $\R^d$ with $d\geq 1$ and $x\in K$. We define a convex density
$d_K(x, r)$ of $K$ at  $x$  at the scale $r>0$ as
$$d(x,r)= \frac{I(x,r)}{2r}\;,$$
where $I(x,r)$ is the diameter of the convex hull of $K\cap  B(x,r)$.
\end{defi}

\begin{theo}\label{Cantor}
Let $K\subset \R^d$ with $d\geq 1$ be a compact set.  Suppose that for every $x \in K$  we have that
$$\liminf_{r\rightarrow 0} \frac{\int_r^{\diam{K}} d_K^2(x,t)\frac{dt}{t} }{-\log r}>0 \;.$$
Then $$\HD(K)>0\;.$$

If additionally, there is a constant $d_0>0$ such that for every $x\in K$,
\begin{equation}\label{d}
\liminf_{r\rightarrow 0} \frac{\int_r^{\diam{K}} d_K^2(x,t)\frac{dt}{t} }{-\log r}\geq d_0^2 \;,
\end{equation}
then 
$$ \HD(K)\geq c d_0^2\;,$$
where $c$ is a universal constant.
\end{theo}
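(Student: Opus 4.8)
The plan is to deduce Theorem~\ref{Cantor} from Theorem~\ref{thmMain} (or, more precisely, from Theorem~\ref{thmMainTech}) by a reduction to the connected case. The obstacle is that $K$ is now merely compact, so we cannot directly speak of the wiggliness of $K$ as a continuum; instead we will build, at a fixed small scale, a continuum out of $K$ and the convex hulls of its local pieces, transfer the convex-density hypothesis on $K$ into a $\beta$-number lower bound for that continuum, and then let the scale go to zero.

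First I would normalize $\diam K=1$ and fix a generation parameter $\lambda\in(0,1)$. For each $n$, Besicovitch's covering theorem gives finitely many disjointness subfamilies of balls $B(x,\lambda^n)$, $x\in K$, covering $K$; on each such subfamily the centers form a finite set $Z$. The key point is the elementary geometric observation that if $d_K(x,r)\geq\delta$, i.e. the convex hull of $K\cap B(x,r)$ has diameter $\geq 2\delta r$, then the set $K_{\mathrm{hull}}$ obtained from $K$ by replacing each local piece $K\cap B(x,\lambda^n)$ by its convex hull (or a spanning tree/path inside that hull) has $\beta_{K_{\mathrm{hull}}}(x,r)\gtrsim\delta$ at scales $r\geq\lambda^n$: a convex set of diameter $2\delta r$ contains two points at distance $2\delta r$, hence contains a segment of that length, and the triangle inequality forces any line approximating $K_{\mathrm{hull}}\cap B(x,r)$ to miss one endpoint by a definite fraction of $\delta r$. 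Thus the integral $\int_r^1 \beta^2_{K_{\mathrm{hull}}}(x,t)\,dt/t$ inherits, up to a universal constant and up to the cutoff at scale $\lambda^n$, the lower bound $\int_r^1 d_K^2(x,t)\,dt/t$ assumed in \eqref{d}. I would connect the pieces by adding a controlled set of arcs of total length bounded by a constant times the number of balls, playing the role of the exceptional set $E$ in Theorem~\ref{thmMain}; since the added length is finite, $\H(E)<\infty$ is satisfied while the wiggly part retains positive $\H^1$-measure.

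Applying Theorem~\ref{thmMainTech} to this continuum produces a probability measure $\mu_n$ with $\mu_n(B(x,r))\leq c'\,r\exp\!\big(-c\int_r^1\beta^2\,dt/t\big)$ for $x$ in the wiggly set, and in particular $\mu_n(B(x,r))\leq c' r^{1+cd_0^2}$ for all scales $r\in[\lambda^n,1]$, once $n$ is large enough that the $\liminf$ in \eqref{d} has essentially been reached uniformly (here one uses the standard device of exhausting $K$ by the sets $X_n$ where the integral bound holds down to scale $\lambda^n$, as in the proof of Theorem~\ref{thminv}, and absorbing the small-scale contribution). Pushing these measures to scale-$\lambda^n$ discretizations of $K$ and extracting a weak-$*$ limit $\nu$ supported on $K$, one checks that $\nu(B(x,r))\leq C r^{1+cd_0^2}$ for all $x\in K$ and all $r>0$; the mass distribution principle then yields $\HD(K)\geq 1+cd_0^2$, which is even stronger than the claimed $\HD(K)\geq cd_0^2$ (the weaker form is what survives when one only knows the $\liminf$ is positive but possibly small, giving $\HD(K)>0$ in the first assertion).

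The main obstacle I anticipate is the passage to the limit: for each finite generation $n$ one gets a measure on a continuum that approximates $K$, and one must verify that the ball-mass estimates are uniform in $n$ down to a scale that tends to $0$, and that the weak-$*$ limit genuinely lives on $K$ and still satisfies the bound at every scale. This requires care in how the convex hulls and connecting arcs are chosen so that $K_{\mathrm{hull}}\to K$ in Hausdorff distance and the exceptional length stays bounded, and in quantifying how uniform the convergence in \eqref{d} is over the compact set $K$ (a covering/compactness argument, since the $\liminf$ is only pointwise a priori). Once the uniformity is in hand, the rest is the now-standard mass distribution argument. I would also remark that the first, non-quantitative assertion follows from the second applied on the subsets of $K$ where the $\liminf$ exceeds a fixed positive constant, which cover $K$ by hypothesis, together with countable stability of Hausdorff dimension.
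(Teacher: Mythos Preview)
Your central geometric claim is false, and the argument collapses with it. You assert that if $d_K(x,r)\geq\delta$ then $\beta_{K_{\mathrm{hull}}}(x,r)\gtrsim\delta$, because a convex set of diameter $2\delta r$ ``contains a segment of that length, and the triangle inequality forces any line approximating $K_{\mathrm{hull}}\cap B(x,r)$ to miss one endpoint by a definite fraction of $\delta r$.'' But a line \emph{through} that segment misses neither endpoint at all. Convex density measures only the diameter of $K\cap B(x,r)$ relative to $r$; it says nothing about deviation from a line. Take $K$ to be the middle-thirds Cantor set sitting inside $\R^1\subset\R^d$: then $d_K(x,r)\geq 1/4$ at every point and every small scale, yet $\beta_K(x,r)\equiv 0$, and the convex hulls you form are all subsegments of the same line. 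No amount of adding connecting arcs manufactures wiggliness correlated with $d_K$. Your proposed conclusion $\HD(K)\geq 1+cd_0^2>1$ is impossible for a subset of $\R$, and the remark that this is ``even stronger than the claimed $\HD(K)\geq cd_0^2$'' should have been a warning sign rather than a bonus.

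The paper's proof supplies exactly the missing dimension. One embeds $K\subset[0,1]^d$ and forms the cone $K^*\subset\R^{d+1}$ by joining every point of $K$ to a fixed apex $S$ by a straight segment. Now $K^*$ is a genuine continuum, and at any point $z\in K^*\setminus\{S\}$ the ball $B(z,r)$ contains both a piece of the segment through $z$ (with a definite component in the $(d{+}1)$-st direction) and, in the horizontal slice through $z$, a scaled copy of $K$ with diameter $\gtrsim d_0 r$ by hypothesis. These two directions are transverse, so $\beta_{K^*}(z,r)\gtrsim d_0$, and Theorem~\ref{thmMain} gives $\HD(K^*)\geq 1+c\,d_0^2$. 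The product inequality $\HD(K^*)\leq \HD(K)+\MD([0,1])=\HD(K)+1$ then yields $\HD(K)\geq c\,d_0^2$. The passage to one higher ambient dimension is not cosmetic: it is what converts ``large diameter'' into ``large $\beta$-number,'' and it is precisely the idea your construction lacks.
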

\begin{proof}
Without loss of generality, $K\subset [0,1]^d$.
We build a continuum $K^*\subset \R^{d+1}$ by joining the point $S=(1,\ldots,1)\in \R^{d+1}$ with every $x\in K$ by the segments $I_x$ with the endpoints at  $S$ and  $x$, 
$$ K^*=\bigcup_{x\in K} I_x\;.$$
Since every horizontal hyperplane $x_{d+1}=w$, $w\in (0,1)$, intersects $K^{*}$ along an affine copy
of $K$, the condition 
$$\liminf_{r\ra 0}\frac{ \int_r^{\diam \!K} \be_K^2(x,r)\frac{dt}t}{-\log r}\geq \beta_0^2$$
is satisfied for every $z\in K^*\setminus S$ and some $\be_0= c' d_0$ where $c'$ is a universal constant.
$K^*$ is also wiggly at $S$ but we rather refer to the obvious fact that $\H(S)=0$,  Theorem~\ref{thmMain} implies that 
$$\HD(K^{*})\geq 1+c \beta_0^2=1+c c'^2 d_0^2\:. $$
By the product formula (Theorem~8.10 in~\cite{matilla}), 
\begin{equation}\label{equ:prod}
1+c c' d_0^2 \leq  \HD(K^{*}) \leq \HD(K) + \MD([0,1])= \HD(K) +1\;.
\end{equation}
\end{proof}
In general, Theorem~\ref{Cantor} admits only finite $E$ as exceptional sets.
Indeed, a countable compact $K=\{0\}\cup \bigcup_{n=1}^{\infty} \{\frac{1}{n}\}$ has positive convex density at all scales only at $0$ and $\HD(K)=0$.

The estimates of~Theorem~\ref{Cantor} are sharp as shows the following theorem.
\begin{theo}\label{Cantorbis}
Let $N\subset \R^d$ with $d\geq 1$.  Suppose that for every $x \in N$  we have that
$$\liminf_{r\rightarrow 0} \frac{\int_r^{1} d_N^2(x,t)\frac{dt}{t} }{-\log r}\leq d_0^2 \;.$$
Then 
$$ \HD(N)\leq  c d_0^2\;.$$
where $c$ is a universal constant.
\end{theo}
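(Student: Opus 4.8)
The plan is to mimic the argument of Theorem~\ref{thminv}, replacing the structure theorem for sets lying near a line ($\beta$-numbers) with the corresponding elementary fact for convex density: if a finite set $Z$ of diameter $1$ has $\int_0^1 d_Z^2(x,t)\,\frac{dt}{t}\le L$ for every $x\in Z$, then $Z$ can be covered by $Ce^{CL}$ balls of any prescribed small radius (in fact $Z$ sits on a rectifiable curve of length $Ce^{CL}$, exactly as in Theorem~\ref{thmBJ97}, since $d_Z\ge \beta_Z$ at every point and the $\beta$-version gives the stronger conclusion). Normalizing $\diam N=1$, fix $\epsilon>0$ and $n_0>1$, and for $n\ge n_0$ set
$$X_n=\Bigl\{x\in N:\ \int_{2^{-n}}^{1} d_N^2(x,t)\,\frac{dt}{t}\le (\epsilon+d_0^2)\,n\log 2\Bigr\}.$$
By the hypothesis $\liminf_{r\to0}\bigl(\int_r^1 d_N^2(x,t)\frac{dt}{t}\bigr)/(-\log r)\le d_0^2$, every $x\in N$ lies in $X_n$ for infinitely many $n$, so $N=\bigcup_{n\ge n_0}X_n$.

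Next I would run the Besicovitch covering argument verbatim. For each $n$, cover $X_n$ by balls of radius $2^{-n}$ centered in $X_n$, and split them via Besicovitch into $M=M(d)$ subfamilies $\mathcal G_i$ of pairwise disjoint balls. Let $Z_i$ be the set of centers of $\mathcal G_i$; then for $x\in Z_i$,
$$\int_0^{\diam Z_i} d_{Z_i}^2(x,t)\,\frac{dt}{t}\ \le\ \int_{2^{-n}}^{1} d_N^2(x,t)\,\frac{dt}{t}\ \le\ (\epsilon+d_0^2)\,n\log 2,$$
using that the convex hull of $Z_i\cap B(x,t)$ is contained in the convex hull of $N\cap B(x,t)$, so $d_{Z_i}(x,t)\le d_N(x,t)$ for $t\le 2^{-n}$, and that $d_{Z_i}(x,t)\le 1$ always. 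By the covering lemma for convex density (equivalently Theorem~\ref{thmBJ97} applied to $\beta_{Z_i}\le d_{Z_i}$), $Z_i$ lies on a curve $\Gamma_i$ with $\H(\Gamma_i)\le Ce^{C(\epsilon+d_0^2)n\log 2}$, hence $\#\mathcal G_i\le C 2^{n}\,\H(\Gamma_i)$ after accounting for the radius, and summing over the $M$ subfamilies,
$$\sum_{B\in\mathcal G(n)}(\diam B)^{s}\ \le\ M\,C_1\,2^{-n(s-1-C(\epsilon+d_0^2))}.$$

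Finally I would choose the exponent $s=1+\alpha$ with $\alpha>2C(\epsilon+d_0^2)$, sum over $n\ge n_0$ to get $\sum_{B\in\mathcal G}(\diam B)^{1+\alpha}\le C'2^{-n_0 Cd_0^2}$, and let $n_0\to\infty$ to conclude $\mathcal H^{1+\alpha}(N)=0$. This shows $\HD(N)\le 1+\alpha$ for every $\alpha>2C(\epsilon+d_0^2)$; however the claimed bound is $\HD(N)\le c\,d_0^2$, not $1+c\,d_0^2$, so a subtractive normalization is needed. The cleanest route is the product-formula trick used in the proof of Theorem~\ref{Cantor}, run in reverse: form $N^*=\bigcup_{x\in N} I_x\subset\R^{d+1}$ by coning $N$ from $S=(1,\dots,1)$; the horizontal slices of $N^*$ are affine copies of $N$, so the convex-density hypothesis on $N$ transfers to a $\beta$-hypothesis $\liminf_r(\int_r^1\beta_{N^*}^2(z,t)\frac{dt}{t})/(-\log r)\le \beta_0^2$ with $\beta_0=c'd_0$ at every $z\in N^*$. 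Then Theorem~\ref{thminv} gives $\HD(N^*)\le 1+c\beta_0^2=1+cc'^2 d_0^2$, while the product inequality for dimensions (Theorem~8.10 in~\cite{matilla}, with $\MD([0,1])=1$) gives $\HD(N)\le\HD(N^*)-1\le cc'^2 d_0^2$, which is the assertion. The main obstacle is the verification that the convex-density integral of $N^*$ at a cone point is controlled by that of $N$ on the slice — one must check that $\beta_{N^*}(z,t)\lesssim d_N(y,t')$ for the nearby slice point $y$ and comparable scale $t'$, uniformly away from the apex $S$, and handle the apex separately by noting $\H(\{S\})=0$ does not by itself suffice here (unlike in Theorem~\ref{thmMain}, there is no connectivity), so one instead covers a small ball around $S$ directly, using that $N^*$ near $S$ is a cone over a set of diameter $\to 0$ and contributes negligibly to any $\mathcal H^{s}$ estimate with $s>1$.
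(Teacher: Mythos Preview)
Your proposal lands on the right idea—pass to $\R^{d+1}$, apply Theorem~\ref{thminv}, and subtract $1$ via a product inequality—but it is considerably more laborious than the paper's argument, and the route through the cone introduces a genuine obstacle that you do not resolve.

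The paper simply takes the Cartesian product $K=N\times[0,1]\subset\R^{d+1}$ rather than the cone $N^*$. For any $(x,s)\in K$, the vertical line $\{x\}\times\R$ witnesses $\beta_K((x,s),r)\le 2\,d_N(x,r)$ immediately, so the hypothesis of Theorem~\ref{thminv} holds at \emph{every} point of $K$ with $\beta_0\sim d_0$; there is no apex and no exceptional set. Then $\HD(K)\le 1+c\,d_0^2$, and the product inequality $\HD(N\times[0,1])\ge \HD(N)+\HD([0,1])$ (this is the lower bound in Mattila's Theorem~8.10, not the upper bound with $\MD$) gives $\HD(N)\le c\,d_0^2$.

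Your cone $N^*$ can be made to work, but the apex $S$ is a real problem: for small $r$, $N^*\cap B(S,r)$ is a rescaled copy of the entire cone, so $\beta_{N^*}(S,r)$ is bounded below by a constant depending only on the geometry of $N$, and the $\liminf$ at $S$ is \emph{not} $\le c'd_0^2$ when $d_0$ is small. Your proposed fix—that $N^*\cap B(S,\epsilon)$ ``contributes negligibly to any $\mathcal H^s$ estimate''—is circular, since that neighborhood is an affine copy of $N^*$ and has the same Hausdorff dimension. The correct salvage is to apply Theorem~\ref{thminv} to $N^*\setminus B(S,1/n)$ for each $n$ (where your transfer estimate $\beta_{N^*}(z,r)\lesssim d_N(x,r')$ does hold, with constants depending on $n$ but irrelevant in the $\liminf$) and then use countable stability of $\HD$. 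This works, but at that point the product is cleaner in every respect.

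Your opening Besicovitch computation is a detour: invoking Theorem~\ref{thmBJ97} puts $Z_i$ on a \emph{curve}, which already has dimension $1$, and that is exactly why you recover only $1+c\,d_0^2$. A direct argument \emph{is} possible if instead of Theorem~\ref{thmBJ97} you bound $\#Z_i$ by $2^{C(\epsilon+d_0^2)n}$ using that small $d_{Z_i}(x,t)$ forces $Z_i\cap B(x,t)$ into a ball of much smaller radius, so the covering numbers grow only at the sparse ``dense'' scales—but you do not pursue this line.
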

\begin{proof}
Without loss of generality we may assume that $N\ss[0,1]^d$.
Consider $K=N\times [0,1]\subset \R^{d+1}$. The set $K$ satisfies the hypothesis of Theorem~\ref{thminv} with $\beta_0=d_0$.
Therefore, there exists a universal constant $c>0$ such that
$$\HD(K)\leq 1+c\beta_0^2\;.$$
By the product formula for Hausdorff dimension (Theorem~8.10 in~\cite{matilla}),
$$\HD(N\times [0,1])\geq \HD(N)+\HD([0,1])=\HD(N)+1\;$$
and thus, $\HD(N)\leq cd_0^2$.  
\end{proof}

\paragraph{Invariance property.} Suppose that $K$ is a real compact which satisfies the hypothesis of Theorem~\ref{Cantor}.
Let $\mathcal{D}_K$ be the set of all continuous real functions $h$ defined on some 
neighborhood  of $K$, with the property that there exist $\theta_1,\theta_2>1$ such that for every $x\in K$ and every interval $I$
with the middle point at $x$ and the lenght $|I|$ small enough, 
\begin{equation}\label{equ:qc}
\theta_1 |h(I)|\leq |h(2I)|\leq \theta_2 |h(I)|,
\end{equation}
where $2I$ stands for the double of $I$ and $|h(I)|$ denotes the length of the image $h(I)$.
Then, 
$$\inf_{h\in \mathcal{D}_K}\HD(h(K))\geq 1+ c(\theta_1,\theta_2) d_0^2\;,$$
where $c(\theta_1,\theta_2)$ a constant which depends only on $\theta_1$ and $\theta_2$. Morever, if $\theta_1$ and $\theta_2$
tend to $2$ then $c(\theta_1,\theta_2)$ tends to a universal constant. The constant $c(\theta_1,\theta_2)$ can be easily
estimated using  Theorem~\ref{thmMean} and the fact that the condition $(\ref{equ:qc})$ prohibits, on one hand, too much expansion of non-wiggly scales and, on the other  hand, too much contraction of wiggly scales (see also the proof of Theorem~\ref{TCE}).

Every differentiable homeomorphism $h:U\mapsto \R$ with the derivative different from $0$ belongs to $\mathcal{D}_K$. Another example
of the class of maps in  $\mathcal{D}_K$ are real quasi-regular functions.

\paragraph{Example.} The $\frac{1}{3}$-Cantor set is obtained from the unit interval $X_0=[0,1]$ through an inductive procedure.
The closed set $X_n$ is obtained from the closed set $X_{n-1}$ by removing the middle one-third of each component of $X_{n-1}$. 
The $\frac{1}{3}$-Cantor set is $X=\bigcap_{n=0}^{\infty} X_n$. The Hausdorff dimension of $X$ is $\HD(X)=\frac{\log 2}{\log 3}$.
One can easily check that $X$ is of positive convex density, that is for every $x\in X$, we have that  $d_X(x,r)\geq d_0=\frac{1}{4}$ provided  $r$ is small enough. Therefore, there exists a universal constant
$c$ such that the image of $X$ by any real diffeomorphism is bigger than $c$.  

\subsection{Bowen's dichotomy}\label{sec:bowen}
The property of uniform wiggliness was used in~\cite{bijo2} to prove Bowen's dichotomy for
a connected limit set of an analytically finite and not elementary  Kleinian group. 
In  polynomial  dynamics,  the corresponding result was proven earlier by  Zdunik, ~\cite{z}.
The connected Julia set of a polynomial is either a segment/circle or its Hausdorff dimension  is strictly bigger than $1$. The
strategy of the proof in~\cite{z}  was  different than  that of Bishop and Jones
and was based on the study of the statistical properties  of 
the unique   measure $\mu$ of maximal entropy for $f$
( in polynomial dynamics $\mu$ coincides with the harmonic measure with the pole at $\infty$).
The main observation of \cite{z} is that $\phi\circ f^{n}$, where $\phi: =\HD(\mu) \log |f'|-\log (\deg f)$, can be treated as  a sequence of random variables. There are two possibilities,  either $\phi$ is homologous to $0$ in $L^{2}(J,\mu)$ or the law of iterated logarithm holds. The former  case,  by  the``boot strapping'' argument,  leads to an analytic Julia set, while the latter implies  that $\HD(\mathcal{J})>1$. 

\paragraph{Topological Collet-Eckmann rational maps.} The definition of rational TCE maps (see \cite{przro}) is usually  stated as follows.
Let $f$ be a rational map of the Riemann sphere $\CC$ of degree bigger than $1$.
For a given positive  $\delta$ and $ L$ define
${\cal G}(z, \delta, L)$ to be the set of positive
integers $n$ such that 
\[\#\{i: 0\leq i\leq n, \Crit_f \cap  \mbox{Comp}_{f^{i}(z)}
f^{-n+i}(B(f^{n}(z),\delta))\not = \emptyset\}\leq L,\]
where Comp$_{y}(A)$ stands for the component of $A$ which contains $y$.

\begin{defi}\label{defi:tce}
A rational map  $f$ satisfies TCE 
if there are positive $\delta$, $L<\infty$ and $\kappa$
so that for every point $z\in \CC$ ($z$ belongs to the Julia set) we have that
\[\inf _{n} \frac{\# {\cal G}(z,\delta, L)\cap [1,n]}{n}\geq \kappa\;.\]
\end{defi}

We have the following geometric counterpart of Bowen's dichotomy.

\begin{theo}\label{TCE}
Suppose that $f$ is a rational TCE map of degree bigger than $1$.  If the Julia set $\cal {J}\not  =\CC$ of
$f$ is connected then ${\cal J}$  is either an interval/circle or a mean wiggly  continuum.
\end{theo}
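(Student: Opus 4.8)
\textbf{Proof proposal for Theorem~\ref{TCE}.}
The plan is to establish a dichotomy by examining whether the Julia set $\mathcal{J}$ is mean wiggly at every point. Suppose it is not; we will show $\mathcal{J}$ must be an interval or a circle. The starting observation is that if $\mathcal{J}$ fails to be mean wiggly at some point $z_0 \in \mathcal{J}$, then along a sequence of scales of positive density the set $\mathcal{J}$ is $\beta_0$-flat near $z_0$, for arbitrarily small $\beta_0$. The role of the TCE hypothesis is to transport this flatness: for a definite fraction of times $n$, the pull-back of a ball $B(f^n(z_0),\delta)$ to a neighborhood of $z_0$ is achieved by an iterate with uniformly bounded criticality (bounded by $L$), hence with uniformly bounded distortion on a slightly smaller ball (a Koebe-type estimate for bounded-degree proper maps, as in the Denker--Przytycki--Urba\'nski shrinking-neighborhoods technology). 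So flatness at $z_0$ at a scale comparable to the pull-back size forces near-flatness of a chunk of $\mathcal{J}$ around $f^n(z_0)$ at a definite scale.

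The core of the argument is to iterate this flatness-propagation to conclude \emph{global} flatness. Using TCE together with the topological exactness of $f$ on $\mathcal{J}$ (every sufficiently small ball has some forward image covering $\mathcal{J}$), a positive-density set of good return times combined with a pigeonhole argument on the finitely many scales $\delta, \lambda\delta, \dots$ should let me conclude that for every $w \in \mathcal{J}$ and every small scale $r$, $\beta_{\mathcal{J}}(w,r) \le \beta_0'$ for some $\beta_0'$ that can be taken as small as we wish. Concretely: if $\mathcal{J}$ were not uniformly flat at all scales there would be a point and scale where it wiggles by a definite amount, but pulling that wiggle back via a bounded-distortion branch to the flat scales near $z_0$ yields a contradiction with the $\beta_0$-flatness obtained above, once $\beta_0$ is chosen small enough relative to the distortion constants and the (finitely many) geometric scales involved. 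Therefore $\beta_{\mathcal{J}}(w,r) \to 0$ uniformly, i.e. $\mathcal{J}$ is an \emph{asymptotically flat} connected set in the sense discussed after Theorem~\ref{thminv}.

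It then remains to upgrade ``asymptotically flat connected compact'' to ``interval or circle''. By Theorem~\ref{thminv} (or its mean version) such a set has $\HD(\mathcal{J}) = 1$, but we need the stronger topological rigidity. Here I would invoke the fact that a connected, compact set which is $\beta$-flat at every point and every scale with $\beta$ small is contained in a single $C^{1}$-smooth arc or Jordan curve up to bi-Lipschitz distortion (this is the Bishop--Jones/Jones traveling-salesman circle of ideas: uniformly small $\beta$-numbers and connectivity force the set to lie on a chord-arc curve with a constant tending to the trivial one as $\beta \to 0$), and then use the dynamical self-similarity of $\mathcal{J}$ under $f$: an $f$-invariant set that sits on a smooth arc/curve with arbitrarily good flatness must, by a bootstrapping/real-analyticity argument exactly as in Zdunik's proof and as sketched for the class $\mathcal{D}_K$ after Theorem~\ref{Cantor}, actually be a real-analytic arc or curve, whence an interval or a circle.

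The main obstacle I expect is the flatness-propagation step: making the transport of $\beta$-numbers under the TCE bounded-criticality branches quantitatively uniform, in particular controlling the distortion of the pull-back maps on the relevant portion of $\mathcal{J}$ independently of $n$ and handling the finitely-many-scales bookkeeping so that ``positive density of good times'' genuinely yields ``\emph{all} small scales at \emph{every} point are flat.'' The interplay between the density parameter $\kappa$, the distortion constant coming from $L$, and the choice of $\beta_0$ has to be arranged carefully — this is the technical heart, analogous to the proof strategy alluded to parenthetically in the discussion of the class $\mathcal{D}_K$.
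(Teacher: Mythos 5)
Your first half is essentially the paper's argument: the TCE pull-back times give scales $r_n$ of positive density $\kappa_0$ in the dyadic ladder, the failure of mean wiggliness at some $z_\beta$ gives $\beta$-flat scales of density $\rho$, and choosing $\rho$ with $\rho+\kappa_0>1$ forces an overlap, at which point the bounded-criticality branch transports the flatness forward to a ball $B_n=B(f^n(z_\beta),\delta')$ of \emph{definite} radius. (One small bookkeeping point: ``positive density'' of flat scales is not enough to guarantee the overlap --- you need the flat density arbitrarily close to $1$, which the negation of mean wiggliness does supply along a subsequence.)

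The genuine gap is in your second half. What the transport gives you is that $\mathcal{J}\cap B_n$ lies in a $\beta'$-neighborhood of a finite union of analytic arcs, with $\beta'\to 0$ as $\beta\to 0$ --- i.e.\ control at the \emph{single} scale $\delta'$ at the \emph{particular} points $f^n(z_\beta)$. This does not yield your claimed conclusion that $\beta_{\mathcal{J}}(w,r)\le\beta_0'$ for \emph{every} $w\in\mathcal{J}$ and \emph{every} small $r$: below scale $\beta'\delta'$ you know nothing about $\mathcal{J}\cap B_n$, and the TCE branches only connect the small scales at $z_\beta$ to the scale $\delta$ along the forward orbit of $z_\beta$, so your proposed ``pull the wiggle at $(w,r)$ back to a flat scale at $z_0$'' has no controlled branch to use. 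Consequently the chord-arc containment and the Zdunik-style bootstrapping rest on an unestablished premise. The paper circumvents this entirely: it lets $\beta\to 0$ and uses compactness of the normalized degree-$\le L$ proper maps to conclude that in some \emph{fixed} ball $B$ of radius $\delta'$ the Julia set is \emph{exactly} a finite union of analytic Jordan arcs; the eventually-onto property ($f^m(B)\supset\mathcal{J}$ for some $m$) then makes all of $\mathcal{J}$ a finite union of analytic arcs with no distortion control needed, since forward images of analytic arcs are analytic arcs; and the final reduction to a segment or circle is a topological argument (count of Fatou components, access angles at points of $\mathcal{J}$, and Fatou's theorem on analytic Julia sets), not a measure-theoretic bootstrapping. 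To repair your proof, replace ``uniform smallness of all $\beta$-numbers'' by this limit-and-spread argument, or supply an independent proof that approximate flatness at one definite scale near a dense forward orbit upgrades to exact analyticity.
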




\subsection{Harmonic measure and mean wiggly Julia sets}\label{sec:mand}
Let $E$ be a full compact in $\C$.
The harmonic measure $\omega$ of $E$ with a base point at $\infty$
can be described in terms of the Riemann
map 
\[ \Psi :\: \C\setminus \ol{D(0,1)} \mapsto \C\setminus E\]
which is tangent to identity at $\infty$. 
Namely,  $\Psi$ extends radially almost everywhere
on the unit circle with respect to the normalized $1$-dimensional
Lebesgue measure $d\theta$ and  $\omega=\Psi_{*}(d\theta)$.

The Mandelbrot set ${\cal M}$ is the set of the parameters $c\in \C$ for which the corresponding Julia
set ${\cal J}_c$ of the quadratic polynomial $f_c(z)=z^2+c$ is connected. It is known that both the Mandelbrot
set and its complement are  connected. A parameter $c\in \partial  {\cal M}$ is called {\em Collet-Eckmann}
if    $\liminf_{n\rightarrow \infty} \frac{\log|(f_c^{n})'(c)|}{n}>0\;$.  

It was proven in ~\cite{harm, smir} that 
the Collet-Eckmann parameters in the boundary of the Mandelbrot set are of full harmonic measure.
Since every Collet-Eckmann quadratic polynomial  $f_c(z)=z^2+c$ satisfies TCE property, we can invoke
Theorem~\ref{TCE} to derive the following corollary. 
\begin{coro}\label{co}
For almost all $c\in \partial {\cal M}$, the corresponding Julia set is a mean wiggly continuum.
\end{coro}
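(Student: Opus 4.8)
The plan is to obtain Corollary~\ref{co} by feeding the density theorem for Collet--Eckmann parameters into the geometric dichotomy of Theorem~\ref{TCE}, and then discarding, on a set of zero harmonic measure, the parameters for which the non-wiggly alternative occurs.

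First I would check that the topological hypotheses of Theorem~\ref{TCE} hold for \emph{every} $c\in\partial{\cal M}$. By the very definition of the Mandelbrot set, ${\cal J}_c$ is connected for $c\in{\cal M}$; and since ${\cal J}_c$ is the boundary of the nonempty open basin of $\infty$ of $f_c$, it is a proper subset of $\CC$. Thus the only inputs still to be supplied are the dynamical hypothesis (TCE) and the resolution of the two alternatives.

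Next, by \cite{harm,smir} the Collet--Eckmann parameters form a subset of $\partial{\cal M}$ of full harmonic measure $\omega$; that is, for $\omega$-a.e.\ $c\in\partial{\cal M}$ one has $\liminf_{n} \frac1n \log|(f_c^n)'(c)|>0$. For such a $c$ the polynomial $f_c$ satisfies the TCE condition of Definition~\ref{defi:tce} (for unicritical polynomials the Collet--Eckmann condition implies TCE, cf.\ \cite{przro}), so Theorem~\ref{TCE} applies and shows that ${\cal J}_c$ is either an interval/circle or a mean wiggly continuum.

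Finally I would remove the degenerate branch. A quadratic Julia set ${\cal J}_c$ is a round circle precisely when $c=0$ (conjugacy of $f_c$ to $z\mapsto z^2$) and a straight segment precisely when $c=-2$ (conjugacy to the Chebyshev map $z\mapsto z^2-2$); as $c=0$ lies in the interior of ${\cal M}$, the only parameter in $\partial{\cal M}$ with a smooth Julia set is $c=-2$. Since $-2$ is the landing point of the unique external ray of angle $1/2$ of $\CC\setminus{\cal M}$, it corresponds under the Riemann map $\Psi$ of $\CC\setminus{\cal M}$ to a single point of the unit circle, whence $\omega(\{-2\})=0$. Deleting this point from the full-measure set of Collet--Eckmann parameters produces a set of full harmonic measure on which, by the previous step, ${\cal J}_c$ is a mean wiggly continuum, which proves the corollary. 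The essential content sits entirely in the two cited theorems; the one assembly point that genuinely needs the argument above is the verification that the ``interval/circle'' alternative of Theorem~\ref{TCE} is attained only at $c=-2$ and that this parameter carries no harmonic mass.
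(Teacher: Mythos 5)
Your argument is correct and follows the paper's own route: the Collet--Eckmann parameters have full harmonic measure on $\partial{\cal M}$ by \cite{harm,smir}, CE implies TCE, and Theorem~\ref{TCE} then yields the dichotomy. The only addition is your explicit disposal of the interval/circle alternative via $c=-2$ and $\omega(\{-2\})=0$, a step the paper leaves implicit but which is a welcome (and correct) completion.
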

\paragraph{Remark.} The claim of Corollary~\ref{co} remains true for unicritical polynomials $z^{d}+c$, $d\geq 2$, with $c$ from a generic set on the boundary of the connectedness locus $\mathcal {M}_d$, see ~\cite{harm, smir}.
\subsection{Various examples}\label{sec:example}

\textbf{Example 1.} We describe an example where $E$ has finite positive $1$-measure (denoted by $\H(E)$) with $\HD(W)=1$ and $\H(W)=0$.

This is a modified version of the four corners Cantor set. Let $S_0=[0,1]^2$ be the unit square. Let $S_1^1,\ldots,S_1^4$ the disjoint sub-squares of side-length $0<\frac14a_1<\frac 12$ which have each a common corner with $S_0$. We repeat the construction inside each square of $n$-th generation to obtain four squares of side length $|S_{n+1}|=\frac14a_{n+1}|S_n|$. We set
$$W=\bigcap_{n\geq 1}\bigcup_{i=1}^{4^n}S_n^i.$$

Observe that $|S_n|=4^{-n}\prod_{i=1}^na_n$. We let $a_n\nearrow 1$ and show that $\HD(W)=1$. We may define a measure $\mu$ supported on $W$ such that $\mu(S_n^i)=4^{-n}$. Then
$$\lim_{n\ra+\infty}\frac{\log\mu(S_n)}{\log|S_n|}=\liminf_{n\ra+\infty}\frac{n\log4}{n\log4+\log a_1+\ldots+\log a_n}=1.$$
As $\frac{|S_n|}{|S_{n+1}|}$ is bounded, Billingsley's lemma shows that $\HD(W)=1$. The same bound gives a lower bound of $\be_W(x,r)$ for all $x\in W$ and $r>0$.

The set $W$ can be covered by the $4^n$ squares of $n$-th generation. Therefore it is enough to have $\lim_{n\ra +\infty}\prod_{i=1}^na_i= 0$ to obtain that $\H(W)=0$.

Let $E$ be the union of all diagonals of all squares. 
We may easily compute that 
$$\H(E)=2\sqrt 2(1+\sum_{n\geq 0}4^n|S_n|).$$
We set $a_n=\frac{n^2}{(n+1)^2}$ so $\H(E)<+\infty$. Note that this sequence satisfies the previous conditions.

The set $K=W\cup E$ is a continuum with the desired properties. $K$ is also locally connected.\\

\textbf{Exemple 2.}
If in the previous example we set $a_n=1$  for every $n\geq0$ (the standard four corners Cantor set) then $\H(W)$ is finite,  $\H(E)=\infty$, and $\HD(W\cup E)=1$. The continuum  $K=W\cup E$ is locally connected.\\

\textbf{Example 3.} We give an example  of a locally connected continuum such that $\HD(K)=1$ and  $\H(W)>0$ (but $\H(E)=+\infty$).

Let $W=[0,1]$ and $E$ the union of vertical segments of length $2^{-n}$, with center $k2^{-n}\in W$, for all $n>0$ and $0<k<2^n$, $k$ odd. The continuum $K=W\cup E$ is locally connected and $\HD(K)=1$ as a countable union of segments. It is not hard to see that $\be_K(x,r)$ has a uniform lower bound for all $x\in W$ and all $0<r<1$ (but $\be_W(x,r)=0$). 

\section{Density of wiggliness}
We want to construct a probability measure $\mu$ which captures wiggliness  of a continuum $K$ and allows for effective lower bounds of 
$\HD(K)$. Let $F$ be the set of points of $K$ which are not wiggly
$$F:=\{x\in K : \int_0^1\be_K(x,t)^2\frac{dt}t<+\infty\}.$$
We want to construct $\mu$ which vanishes on  $F$. In this case, $\mu$ will be  generally scale dependent as wiggly parts of $K$ can be contained in a ball of arbitrary small radius. 

The main ingredient of the proof of Theorem~\ref{thmMain} is  the following result.

\begin{theo}
\label{thmMainTech}
Let $K\ss\R^d$ with $d\geq 2$ be a connected compact of diameter $1$ and $F\se E\se K$ with $\H(E)<+\infty$ and $\H(K\sm E)>0$.
There exist a universal constant $C'>0$, a constant $C>0$ depending only on $d$ and a Radon probability measure $\mu$ supported on $B(y,R)\cap K$, $y\in K$ such that $\mu(E)=0$ and for all $x\in K$ and $r>0$
$$\mu(B(x,r))\leq R^{-1}Cr\exp\left(-C'\int_r^1\be_K^2(x,t)\frac{dt}t\right).$$
\end{theo}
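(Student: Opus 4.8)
The plan is to build the measure $\mu$ by a top-down martingale/corona construction on a dyadic-type filtration of $K$, distributing mass so that at each generation the mass is spread according to how much the continuum "opens up" (i.e. according to the local $\beta$-numbers), and then to read off the ball estimate from the telescoping product of the local multipliers. The construction of the filtration itself should come from the combinatorial Proposition~\ref{key} referenced in the introduction, which I will take as a black box: it organizes $K$ into a tree of pieces $\{Q\}$ indexed by scales $2^{-n}$, each of diameter comparable to $2^{-n}$, with bounded overlap and bounded branching, and — crucially — it records at each node a "wiggliness gain" tied to $\beta_K$ evaluated at that location and scale. The first step is therefore to set up this tree, fix the generation-$n$ pieces $Q_n(x)$ containing a given point $x$, and note that $\sum_n$ of the recorded gains along the branch through $x$ is, up to universal constants, comparable to $\int_r^1 \beta_K^2(x,t)\,dt/t$ when $2^{-n}\approx r$.

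Second, I would define $\mu$ inductively: start with unit mass on the root, and when passing from a parent piece $Q$ to its children, split the mass of $Q$ among the children \emph{not} proportionally to $1$ but with a deficit factor of the form $\exp(-c\,\text{(wiggliness gain at }Q))$ on the total, compensated by the fact that a wiggly piece has \emph{more} children (this is where connectivity and the Pythagorean-length heuristic from the introduction enter: oscillation at scale $r$ forces extra length, hence extra sub-pieces, so the per-child mass drops by the exponential factor while the number of children grows enough to keep $\mu$ a probability measure). One must also arrange that the exceptional set $E$ — which contains $F$ and has $\H(E)<\infty$ — receives zero mass; this is done by never assigning mass to pieces that are "essentially inside $E$", which is legitimate because $\H(K\setminus E)>0$ guarantees the surviving, genuinely wiggly part carries a definite proportion of the length at every stage (again via a Pythagoras/length-accounting argument on the continuum), so the renormalization does not collapse. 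The support then lies in some fixed ball $B(y,R)\cap K$ corresponding to a node where the surviving mass concentrates.

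Third, for the ball estimate: given $B(x,r)$ with $x\in K$, pick $n$ with $2^{-n}\approx r$; then $B(x,r)$ is covered by a bounded number of generation-$n$ pieces (bounded overlap), so $\mu(B(x,r)) \lesssim \mu(Q_n(x))$. Unwinding the inductive definition along the branch from the root to $Q_n(x)$, the mass $\mu(Q_n(x))$ equals the product of the per-child splitting ratios, and the "base" part of that product telescopes to something $\lesssim 2^{-n}\approx r$ (this is the "$r/\diam Z$" factor, reflecting that without any wiggliness the construction reduces to the length measure on a rectifiable curve), while the accumulated deficit factors multiply to $\exp(-c\sum(\text{gains})) \approx \exp(-c\int_r^1\beta_K^2(x,t)\,dt/t)$. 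Combining gives the claimed bound, with $C'$ universal and $C$ depending on $d$ only through the Besicovitch/bounded-overlap constants.

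\textbf{The main obstacle} I expect is the last, most delicate point of step two and three simultaneously: showing that the wiggly children genuinely carry enough length (hence enough sub-pieces) to absorb the exponential deficit \emph{uniformly down the tree}, even after deleting the part near $E$. A wiggly continuum can intersect $B(x,r)$ in many disjoint arcs, so "wiggliness at $(x,r)$" does not locally produce extra length in an obvious pointwise way — the extra length is a global consequence of connectivity, exactly the subtlety flagged in the introduction. Making this quantitative — i.e. proving a lemma that says a $\beta_0$-wiggly piece of a continuum, minus a small-length bad set, has sub-pieces whose total "effective length" exceeds that of the piece by a factor $\geq 1+c\beta_0^2$ — is where the real work lies, and it is presumably the content of Proposition~\ref{key} together with the geometric corona estimates of~\cite{pajo} that the paper will invoke. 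The bookkeeping that $\mu(E)=0$ survives all renormalizations, using only $\H(E)<\infty$ and $\H(K\setminus E)>0$, is a secondary but nontrivial obstacle that I would handle by a stopping-time argument: stop the branch (and reassign its mass upward) as soon as a piece is swallowed by an $\H$-small neighborhood of $E$, and check that the stopped pieces have summable length so the total reassigned mass is controlled.
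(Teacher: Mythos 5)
Your overall architecture -- a corona-type tree, mass split multiplicatively down the branches, the ball estimate read off a telescoping product, and a stopping/pigeonhole device to starve $E$ of mass -- is the paper's architecture, and you have correctly located the heart of the matter in the ``wiggliness forces extra length'' lemma. But there are two concrete gaps. First, you set up the tree on dyadic scales $2^{-n}$ with bounded branching and ask each node to contribute a deficit $\exp(-c\beta^2)$; with that filtration the lemma you would need is a \emph{single-scale} gain of the form $1+c\beta_0^2$ in length for a continuum minus a small exceptional set. That is precisely the delicate uniform estimate of Bishop--Jones that this paper is designed to avoid, and it is not what Proposition~\ref{key} provides. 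Proposition~\ref{key} is not a decomposition device at all: it is a length lower bound (length $>L$) for a crossing continuum whose \emph{accumulated} integral $\int\beta^2\,dt/t$ exceeds a large threshold $M$ off a set of small projection. The paper's tree is accordingly built from the stopping time $t_B(x)=\inf\{r:\int_r^{R_B}\beta_{K_B}^2(x,t)\,dt/t\le M\}$: the children of a ball $B$ sit at point-dependent, wildly non-dyadic scales $t_B(x)$, the branching is unbounded, and the per-generation gain is a fixed large factor ($\sum_{x\in X(B)}t_B(x)\ge 10R_B$, via Corollary~\ref{coroDisk}/Lemma~\ref{lemTen}) purchased at the cost of consuming $\approx M$ units of the integral. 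Your $\exp(-c\int\beta^2)$ then falls out as $10^{-N}$ with $N\gtrsim M^{-1}\int_r^1\beta^2\,dt/t$, not from a per-scale $1+c\beta^2$.

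Second, once the scales are variable you lose your ``bounded number of generation-$n$ pieces cover $B(x,r)$'' step: for $r$ strictly between the child scales and the parent scale $R_{B_N}$, one must bound the total radius of the children meeting $B(x,r)$ by $Cr$, and the paper gets this only because the set of child centers (together with the stopped set $Z(B)$) has $\int\beta^2\lesssim M$ and hence lies on an \emph{Ahlfors-regular} rectifiable curve by the Bishop--Jones traveling-salesman theorem (Theorem~\ref{thmBJ97}). Your sketch contains no substitute for this intermediate-scale regularity, and without it the ball estimate fails exactly at the scales between consecutive generations. (Your treatment of $E$ -- stop and reassign mass -- is workable in spirit, but note the paper also needs to \emph{choose the root ball} $B(y,R)$ by a Lebesgue density argument so that $\H(E\cap B)<\eps R$ to start the induction, which is where the factor $R^{-1}$ in the statement comes from.)
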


The proof of Theorem~\ref{thmMainTech} is technically involving. We start by proving an important  combinatorial result, Proposition~\ref{key}, 
and then follow the construction from the proof of Theorem 45 in \cite{pajo}. This construction was proposed by the second author and after some modifications made available
in the written form by David in~\cite{guy} around a decade ago. Proposition~\ref{key}
replaces a {\em stopping time} argument,  usually needed in corona type constructions,
by a direct estimate of length of a crossing curve which is wiggly at many scales.


\paragraph{Planar continua.}
For the sake of simplicity, the  proof of Theorem~\ref{thmMainTech} is  given in details only  for planar sets $K$. We indicate  what modifications are needed for the general case. 

As a preparation, we need to state some results and prove a few facts about the length (or $1$-Hausdorff measure) of wiggly continua in the plane. 

Let $Q\ss \C$ be a square (with sides parallel to the axis). Unless specified otherwise, squares are considered to contain only the left and top edges. Let $|Q|$ denote the side length of $Q$. Let $\del(Q)$ denote the set of all dyadic sub-squares of $Q$ and $\del_k(Q)$ those with side length $2^{-k}|Q|$. For any $\lam > 0$, let $\lam Q$ be the square with the same center as $Q$ and with $|\lam Q|=\lam |Q|$. For a set $S$ of squares, $|S|:=\sum_{Q\in S}|Q|$ and $\#S$ is its cardinal.

For any $x\in Q_0$, where $Q_0:=[0,1]\times[-\frac12,\frac12]$, let 
$$\del(x)=\{Q\in\del(Q_0)\;|\;x\in Q\}.$$ 
Let us also write $\del$ for $\del(Q_0)$ and $\del_k$ for $\del_k(Q_0)$.

\begin{defi}\label{defi:2}
Let $K$ be a compact set in the plane and $Q$ a square such that $Q\cap K\neq\es$.
We define $\beta_{K}(x,r)$ by
$$ \beta_{K}(Q):=  \inf_{L} \sup_{z\in K\cap 3Q} \frac{\dist{z,L}}{|Q|}\ ,$$
where the infimum is taken over all lines $L$ in the plane.
\end{defi}

For a compact set $K\se Q_0$ is easy to check that 
\begin{equation}
\label{equIS}
\beta_\infty^K(x):=\int_0^1\be_K(x,t)^2\frac{dt}t \sim \sum_{Q\in\del(x)}\be_K(Q)^2.
\end{equation}

We will use the following fact, due to the second author (see \cite{jo1990} for a proof, \cite{oki} for a generalization in $R^d$). If $\ga\ss Q_0$ is a rectifiable curve, then
\begin{equation}
\label{equTSP}
\beta_\infty(\ga):=\sum_{Q\in\del}\be_\ga(Q)^2|Q| \apprle \H(\ga).
\end{equation}

\begin{prop}\label{key}
Let $\eps\in[0,1)$ and $L>1$. There exists $M>0$ such that every curve $\ga\ss\C$  joining $0$ to $1$ such that
$$\be_\infty^\ga(x) \geq M$$
for all $x\in\ga\sm E$,  $\H(E)\leq \eps$, is of the  length
$$\H(\ga) > L.$$
\end{prop}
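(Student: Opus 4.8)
The plan is to argue by contradiction, combining the Analyst's Traveling Salesman estimate \requ{TSP} with the pointwise comparison \requ{IS} and a dyadic counting argument. Suppose the conclusion fails: then there is a threshold $L>1$ and $\eps\in[0,1)$ for which no finite $M$ works, so for each $M=n\in\N$ we obtain a rectifiable curve $\ga_n$ joining $0$ to $1$ with $\H(\ga_n)\leq L$, together with an exceptional set $E_n$, $\H(E_n)\leq\eps$, such that $\be_\infty^{\ga_n}(x)\geq n$ for every $x\in\ga_n\sm E_n$. We may rescale and translate so that $\ga_n\ss Q_0$ (its diameter is at least $1$ and at most $L$, so a fixed dilation places everything in $Q_0$; the $\beta$-numbers only improve under such normalization up to a fixed factor). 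The goal is to derive a uniform upper bound on $\int_{\ga_n}\be_\infty^{\ga_n}(x)\,d\H(x)$ that contradicts the lower bound $n\,\H(\ga_n\sm E_n)\geq n(\diam\ga_n-\text{(length near }E_n))$, which blows up with $n$ once we know $\H(\ga_n\sm E_n)$ is bounded below.

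First I would rewrite the left-hand integral using \requ{IS}: up to a universal constant,
$$\int_{\ga_n}\be_\infty^{\ga_n}(x)\,d\H(x)\ \apprle\ \int_{\ga_n}\sum_{Q\in\del(x)}\be_{\ga_n}(Q)^2\,d\H(x)\ =\ \sum_{Q\in\del}\be_{\ga_n}(Q)^2\,\H(\ga_n\cap Q),$$
the last equality by Fubini, since $x\in Q\iff Q\in\del(x)$. Now the key geometric input: for a connected set (a curve) meeting a square $Q$ with $\be_{\ga_n}(Q)>0$, one has $\H(\ga_n\cap 3Q)\gtrsim |Q|$ — a curve cannot enter the enlarged square and realize a nonzero $\beta$-number without depositing length comparable to the side. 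Replacing $\H(\ga_n\cap Q)$ by a sum over the boundedly many dyadic squares $Q'$ of the same generation with $Q\ss 3Q'$, and using $\H(\ga_n\cap 3Q)\apprle\H(\ga_n\cap Q)+\sum\H(\ga_n\cap Q')$, one bounds $\be_{\ga_n}(Q)^2\,\H(\ga_n\cap Q)$ by a constant times a redistribution of $\sum_{Q}\be_{\ga_n}(Q)^2|Q|$, which by \requ{TSP} is $\apprle\H(\ga_n)\leq L$. Hence $\int_{\ga_n}\be_\infty^{\ga_n}\,d\H\apprle L$ with a universal implied constant, uniformly in $n$.

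To finish, I need a lower bound on the measure of the set where $\be_\infty^{\ga_n}$ is large. Since $\diam\ga_n\geq 1$, $\H(\ga_n)\geq 1$, and $\H(E_n)\leq\eps<1$; more carefully, using that $\ga_n$ is a curve (so arclength-parametrized and connected) joining $0$ to $1$, the portion $\ga_n\sm E_n$ still has $\H(\ga_n\sm E_n)\geq \diam(\ga_n)-\H(E_n)\geq 1-\eps>0$ — here one uses that deleting a set of length $\eps$ from a curve cannot decrease its "horizontal extent" by more than $\eps$. Therefore
$$C L\ \geq\ \int_{\ga_n}\be_\infty^{\ga_n}(x)\,d\H(x)\ \geq\ \int_{\ga_n\sm E_n}\be_\infty^{\ga_n}(x)\,d\H(x)\ \geq\ n\,(1-\eps),$$
which is false for $n>CL/(1-\eps)$. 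This contradiction yields the desired $M$.

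**Main obstacle.** The delicate point is the geometric lemma that a nonzero $\beta$-number on $3Q$ forces length $\gtrsim|Q|$ of the curve inside (a neighborhood of) $Q$, together with the bookkeeping needed to pass from $\sum_Q\be(Q)^2\,\H(\ga\cap Q)$ to $\sum_Q\be(Q)^2|Q|$ with only bounded overlap — one must be careful that the enlargement $3Q$ introduces only a dimension-dependent multiplicity and that squares with $\be_{\ga}(Q)=0$ contribute nothing. The other subtlety is the claim $\H(\ga\sm E)\geq\diam\ga-\H(E)$, which should follow from the $1$-Lipschitz nature of the projection onto a line realizing the diameter, but deserves a careful statement since $E$ need not be relatively open.
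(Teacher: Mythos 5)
There is a genuine gap at the central step: the claimed uniform bound $\int_{\ga}\be_\infty^{\ga}(x)\,d\H(x)\apprle \H(\ga)$ is false. After the Fubini rearrangement you correctly arrive at $\sum_{Q\in\del}\be_{\ga}(Q)^2\,\H(\ga\cap Q)$, and to compare this with the Traveling Salesman sum $\sum_{Q}\be_{\ga}(Q)^2|Q|$ you need the \emph{upper} bound $\H(\ga\cap Q)\apprle|Q|$. Your ``key geometric input'' $\H(\ga\cap 3Q)\gtrsim|Q|$ is a \emph{lower} bound and points in the wrong direction; no redistribution over the boundedly many neighbours of $Q$ can convert $\H(\ga\cap Q)$ into $|Q|$, because a curve of total length $L$ may deposit length comparable to $L$ inside a single square of arbitrarily small side. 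Concretely, let $\ga$ run straight from $0$ to $1$ except for an excursion of length $L-1$ confined to a square $Q^*$ of side $2^{-k}$ centred on the segment, the excursion being a rescaled Koch-type curve of the appropriate generation. Then $\H(\ga)\leq L$, but every point of the excursion has $\be_\infty^{\ga}(x)\gtrsim k$ (it sees on the order of $k$ wiggly dyadic scales) while the excursion carries length $L-1$, so $\int_\ga\be_\infty^{\ga}\,d\H\gtrsim k(L-1)\to\infty$ as $k\to\infty$. (This does not contradict the Proposition itself, since here the set $\{x\in\ga:\be_\infty^{\ga}(x)<M\}$ has measure close to $1>\eps$; it only refutes your intermediate inequality, which is the inequality your whole argument rests on.)

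Handling exactly these high-density squares is the heart of the paper's proof, and it is the piece missing from your outline. The paper isolates the maximal dyadic squares $G_n$ with $\H(\ga\cap Q)>\frac{2L}{1-\eps}|Q|$ and runs a dichotomy: either the total side length of these disjoint squares exceeds $\frac{1-\eps}2$, in which case $\H(\ga)>L$ at once; or their union $K_n$ has horizontal projection of measure at most $\frac{1-\eps}2$, so a definite proportion of $[0,1]$ is covered by projections of maximal squares $Q'$ with $S(Q')>cM$ lying \emph{outside} $K_n$. For ancestors of those squares the density bound $\H(\ga\cap Q)\apprle\frac{L}{1-\eps}|Q|$ does hold, which gives the counting estimate $\#\{Q''\in S_n: Q''\ss Q\}\apprle\frac{L}{1-\eps}\frac{|Q|}{|Q'|}$ playing the role of your rearrangement and producing the contradiction with \requ{TSP}. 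Your normalisation, the projection argument giving $\H(\ga\sm E)\geq 1-\eps$, and the overall contradiction scheme are sound, but without this stopping-time decomposition into good and bad squares the proof does not go through.
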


\begin{proof}
By the inequality (\ref{equTSP}), it is sufficient to show that $\be_\infty(\ga)$ is large enough.

We may assume that $\ga\ss Q_0=[0,1]\times[-\frac12,\frac12]$, otherwise the proof is analogous in $[-L,L+1]\times[-L,L+1]$.

Let $G_n$ be the set of maximal squares in $$\{Q\in\del,\,|Q|\geq 2^{-n} \;|\; \H(\ga\cap Q) > \frac {2L}{1-\eps}|Q|\}.$$
The squares in $G_n$ are disjoint. If $|G_n|>\frac{1-\eps}2$ then $\H(\ga) > L$.

We assume that $\beta_\infty(\ga)$ is bounded and that $|G_n|\leq \frac{1-\eps}2$ for all $n\geq 1$, $M>0$, and prove the proposition by contradiction. Let 
$$K_n=\bigcup_{Q\in G_n}Q.$$
$K_n$ is an increasing sequence. Let $K=\cup_{n\geq 1}K_n$.

For $Q\in\del$, let 
$$S(Q)=\sum_{Q\se Q'\in\del}\be_\ga(Q')^2.$$
Observe that $S(\cdot)$ is decreasing. 

By formula (\ref{equIS}), there is $c>0$ such that for any $x\in\ga\sm(K\cup E)$, there is $x\in Q\in \del$ with
$S(Q)> cM$. Let $\B$ the cover of $\ga\sm(K\cup E)$ with such maximal squares. The squares in $\B$ are disjoint and are not contained in $K$. 

Let us consider $\pi$ the projection on the real line. $\H(\pi(K))\leq\frac{1-\eps}2$ and $\H(\pi(E))\leq\eps$. Therefore
$$|\B|\geq\frac{1-\eps}2.$$ 
Let $S_n=\{Q\in\del_n \;|\; S(Q)>cM \text{ and } Q\cap\ga\sm(K\cup E)\neq\es \}$. As $S(\cdot)$ is decreasing, we may also conclude that for $n$ large enough
\begin{equation}
\label{equSn}
\#S_n\geq 2^{n-2}(1-\eps).
\end{equation}
Observe that squares in $S_n$ are disjoint from $K_n$.

\begin{lem}
\label{lemLong}
Let $Q\in\del$. Denote by $\#_k(Q)$ the cardinal of the set of squares in $\del_k(Q)$ which intersect $\ga$. Then
$$\H(\ga\cap Q)\apprge (2^{-k}\#_k(Q)-4)|Q|.$$
\end{lem}

\begin{proof}
Let $Q'\in\del_k(Q)$ intersect $\ga$ and without common boundary with $Q$. Then $\H(\ga\cap 3Q')\geq 2^{-k}|Q|$, as $\ga$ connects $Q'$ to $\pa (3Q')$. We may extract a finite (universal) number of collections of such squares $Q'$ such that in each collection, the squares $3Q'$ have disjoint interior. As we ignore $4\cdot 2^k-4$ squares in $\del_k(Q)$ with common boundary with $Q$, the conclusion follows by considering the collection of squares with maximal cardinal.
\end{proof}

As a consequence of the previous lemma, for any $Q\in\del_n$ not contained in $K_n$, for all $k\geq 1$ we have
$$\#_k(Q)\apprle 2^k.$$
As squares $Q'\in S_n$ are disjoint from $K_n$, for any $Q\in\del$ containing $Q'$, we have
\begin{equation}
\label{equCount}
\#\{Q''\in S_n \;|\; Q''\ss Q\}\apprle \frac L{1-\eps}\frac{|Q|}{|Q'|}.
\end{equation}

We may begin estimates. By the inequality (\ref{equSn}), for $n$ large enough, we have
$$2^{-n}\sum_{Q\in S_n}S(Q)\geq 2^{-n}2^{n-2}(1-\eps)cM\apprge M.$$
We have assumed that $\be_\infty(\ga)$ is bounded, so for some $C>0$
$$C\geq \sum_{Q\in\del\atop |Q|\geq 2^{-n}}\be_\ga(Q)^2|Q|\geq \sum_{Q\in\del\atop \exists Q'\se Q,Q'\in S_n}\be_\ga(Q)^2|Q|$$
$$=\sum_{k=0}^n2^{-k}\sum_{Q\in\del_k\atop \exists Q'\se Q,Q'\in S_n}\be_\ga(Q)^2
=2^{-n}\sum_{k=0}^n2^{n-k}\sum_{Q\in\del_k\atop \exists Q'\se Q,Q'\in S_n}\be_\ga(Q)^2$$
$$=2^{-n}\sum_{k=0}^n\sum_{Q\in\del_k\atop \exists Q'\se Q,Q'\in S_n}\frac{|Q|}{|Q'|}\be_\ga(Q)^2
= 2^{-n}\sum_{Q\in\del\atop \exists Q'\se Q,Q'\in S_n}\frac{|Q|}{|Q'|}\be_\ga(Q)^2$$
$$\apprge 2^{-n} \frac {1-\eps}L\sum_{Q\in \del}\#\{Q''\in S_n \;|\; Q''\ss Q\}\be_\ga(Q)^2 \apprge 2^{-n}\sum_{Q'\in S_n}S(Q')\apprge M,$$
a contradiction.
\end{proof}

\begin{remark}
{\rm Using the same notations and proof, the conclusion of Lemma \ref{lemLong} could be restated in $\R^d$ as follows
$$\H(\ga\cap (1+2^{-k+1})Q)\apprge 2^{-k}\#_k(Q)|Q|.$$
In the proof of Proposition~\ref{key} we could define 
$$K_n=\bigcup_{Q\in G_n}2Q.$$
Observe that there is a sub-collection $G'_n$ of $G_n$ such that $\pi(2G'_n)$ covers $\pi(\ga\cap K_n)$ and each point is covered at most twice. Note also that $\ga\cap K_n$ is increasing. Replacing the constant in the definition of $G_n$ by $\frac {8L}{1-\eps}$, the same proof shows that Proposition \ref{key} generalizes to curves in $R^d$, while $M$ is independent of the dimension $d$.
}
\end{remark}

Let  $\pi$ be an orthogonal  projection on the real axis in $\C$.
We have used the fact that our set is a curve only in two instances: to obtain that $\beta_\infty(\ga) \apprle \H(\ga)$ and to have $[0,1]\se\pi(\ga)$. Also, it is enough that $\H(\pi(E))\leq \eps$. We could therefore relax the hypothesis using the following well known result (see 
for example \cite{pajo}).

\begin{theo}
There exists a universal constant $C>1$ such that any continuum $K\se\R^n$ with $\H(K)<+\infty$ is contained in a curve $\ga$ such that
$$\H(K)\leq \H(\ga)\leq C \H(K).$$
\label{thmInc}
\end{theo}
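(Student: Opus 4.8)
The statement is classical; the plan is to produce a self-contained argument built from a greedy spanning construction inside $K$, a doubling trick, and a compactness limit. Since a continuum in $\R^1$ is a closed interval, I may assume $n\ge 2$. The structural input I would use is the classical fact (see e.g. \cite{pajo}) that a continuum $K$ with $\H(K)<+\infty$ is arcwise connected --- indeed locally connected --- so that any two of its points are joined by an arc lying entirely inside $K$; this is exactly what makes the construction below stay inside $K$.

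First I would build, for each fixed $\epsilon>0$, a spanning tree \emph{inside} $K$. Choose a maximal $\epsilon$-separated set $P_\epsilon=\{x_1,\dots,x_N\}\se K$; it is finite by compactness and $\epsilon$-dense in $K$ by maximality. Then construct a finite topological tree $T_\epsilon\se K$ containing $P_\epsilon$ inductively: start with $T^{(1)}=\{x_1\}$; given a closed connected $T^{(j)}\se K$ that is a finite union of arcs, if some $x_i\notin T^{(j)}$, take an arc in $K$ from $x_i$ to $T^{(j)}$ and keep only its sub-arc up to the first hitting of $T^{(j)}$, so the new edge meets $T^{(j)}$ at a single endpoint, and adjoin it. After at most $N$ steps one obtains $T_\epsilon$, a finite tree whose edges are rectifiable arcs with pairwise disjoint interiors. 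Since $T_\epsilon\se K$, monotonicity of $\H$ gives $\H(T_\epsilon)\le\H(K)$, and since the edges overlap only at finitely many vertices, $\H(T_\epsilon)$ equals the sum $\ell_\epsilon$ of the lengths of its edges. Next I would double: duplicating every edge of $T_\epsilon$ makes all vertex degrees even, so the doubled graph has an Eulerian circuit, which realizes geometrically as a closed Lipschitz curve $\sigma_\epsilon$ with image $T_\epsilon$ traversing each edge exactly twice; hence $\operatorname{length}(\sigma_\epsilon)=2\ell_\epsilon\le 2\H(K)$, and the image of $\sigma_\epsilon$ is contained in $K$ and $\epsilon$-dense in it.

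Finally I would pass to the limit. Parametrize each $\sigma_\epsilon$ on $[0,1]$ with constant speed $\operatorname{length}(\sigma_\epsilon)\le 2\H(K)$; along a sequence $\epsilon_j\to 0$ these maps are uniformly Lipschitz with values in a fixed bounded set, so by Arzel\`a--Ascoli a subsequence converges uniformly to a Lipschitz curve $\gamma\colon[0,1]\to\R^n$. Uniform convergence of parametrizations forces Hausdorff convergence of images; since the images of $\sigma_{\epsilon_j}$ lie in $K$ and are $\epsilon_j$-dense in $K$, they converge to $K$ in the Hausdorff metric, whence $K\se\gamma$. Lower semicontinuity of length under uniform convergence then gives $\H(\gamma)\le\operatorname{length}(\gamma)\le\liminf_j\operatorname{length}(\sigma_{\epsilon_j})\le 2\H(K)$, while $\H(K)\le\H(\gamma)$ is immediate from $K\se\gamma$ and monotonicity of $\H$; take $C=2$.

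The main obstacle is the spanning step: one must check that the greedy ``first-hitting sub-arc'' procedure indeed stays inside $K$, yields a genuine finite tree (edges meeting only at endpoints), and therefore has $\H$ equal to its total edge length and bounded by $\H(K)$. This is the point that converts the soft topological input (arcwise connectedness of a finite-length continuum) into a metric length bound robust enough to survive the compactness limit; the remaining steps --- Eulerian doubling, Arzel\`a--Ascoli, and lower semicontinuity of length --- are routine.
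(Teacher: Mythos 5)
Your proof is correct. Note first that the paper does not actually prove Theorem \ref{thmInc}: it is imported as a ``well known result'' with a pointer to \cite{pajo}, so there is no in-paper argument to compare against. What you give is essentially the classical proof found in the cited sources (and in Falconer's book): a greedy spanning tree inside $K$ whose edges are first-hitting sub-arcs, Eulerian doubling to produce a closed curve of length at most $2\H(K)$ passing through an $\eps$-net, and an Arzel\`a--Ascoli limit combined with lower semicontinuity of length. The one ingredient you import without proof --- that a continuum with $\H(K)<+\infty$ is arcwise connected --- is itself a nontrivial theorem (usually proved by a chaining argument that produces, for $x,y\in K$, an arc in $K$ of length at most $\H(K)$); it is the real content hiding behind the ``soft topological input'' you flag, and everything downstream of it is, as you say, routine. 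Two small points worth making explicit in a write-up: (i) the first-hitting truncation uses that $\{t:\alpha(t)\in T^{(j)}\}$ is closed and omits $t=0$, so its minimum exists and is positive, which is what guarantees the new edge meets the old tree in exactly one point; (ii) an attachment point may fall in the interior of an earlier edge, subdividing it into two graph edges --- harmless for the Euler circuit and for the length count, but it should be said. Your limit argument in fact yields the stronger classical conclusion that the image of $\ga$ \emph{equals} $K$, with the sharp-order constant $C=2$, which certainly suffices for the statement.
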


We obtain the following corollary.
\begin{coro}
Let $\eps\in[0,1)$ and $L>1$. There exists $M>0$ such that every compact set $K\se[0,1]\times[-\frac12,\frac12]$ that satisfies
the following conditions,
\begin{enumerate}
\item $K\cup\{0,1\}\times[-\frac12,\frac12]$ is connected, 
\item $\be_\infty^K(x) \geq M$
for all $x\in K\sm E$,
\item $\H(\pi(E))\leq \eps$, 
\end{enumerate}
is of the length
$$\H(K) > L.$$
\end{coro}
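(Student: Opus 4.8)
The plan is to deduce the corollary from Proposition~\ref{key} together with Theorem~\ref{thmInc}, by reducing the case of a general compact set $K$ to the case of a curve $\ga$. First I would form the continuum $\widehat K := K\cup\br{\{0,1\}\times[-\frac12,\frac12]}$, which by hypothesis~(1) is connected. If $\H(K)$ were finite, then $\H(\widehat K)\le \H(K)+2$ is finite as well, so by Theorem~\ref{thmInc} there is a curve $\ga\supseteq \widehat K$ with $\H(\ga)\le C\,\H(\widehat K)\le C(\H(K)+2)$; if $\H(K)=+\infty$ there is nothing to prove. Thus it suffices to show that, for a suitable choice of $M$, the curve $\ga$ satisfies the length lower bound forced by Proposition~\ref{key}, which then pushes a lower bound back onto $\H(K)$.

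The key point is to transfer the wiggliness hypothesis~(2) from $K$ to $\ga$. Since $K\subseteq \ga$, monotonicity of $\be$ in the set (a larger set can only increase the best-line error on a given ball, hence $\be_K(x,r)\le \be_\ga(x,r)$ for $x\in K$) gives $\be_\infty^\ga(x)\ge \be_\infty^K(x)\ge M$ for every $x\in K\sm E$. The remaining points of $\ga$ lie in $\ga\sm K$, and I must absorb them into an exceptional set. Here is where hypothesis~(3) enters: the good exceptional set for Proposition~\ref{key} applied to $\ga$ should be $E' := E\cup(\ga\sm K)$, and what we need is control not on $\H(E')$ but only on $\H(\pi(E'))$, where $\pi$ is the projection to the real axis — this is exactly the relaxation flagged in the paragraph preceding the corollary (``it is enough that $\H(\pi(E))\le \eps$''). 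Now $\pi(\ga\sm K)$ is contained in $[0,1]\sm \pi(K)$ intersected with... — more carefully, the points of $[0,1]$ not in $\pi(K)$ contribute, but these are precisely the real coordinates not hit by $K$; combined with $\pi(E)$ this must still have measure $\le\eps$. Actually the clean way is: $\pi(\ga)\supseteq[0,1]$ because $\ga$ contains the two vertical segments $\{0,1\}\times[-\frac12,\frac12]$ and is connected, so $\pi(\ga)$ is a connected set containing $0$ and $1$. And $\pi(\ga\sm K)\subseteq \pi(\ga)\sm\pi(K\sm E)$ has measure at most $1-\H(\pi(K\sm E))$. So the hypothesis~(3) should really be read as giving $\H(\pi(E)\cup([0,1]\sm\pi(K)))\le\eps$; I would restate it that way or simply note that $\H(\pi(E))\le\eps$ together with $[0,1]\se\pi(K\cup E)$ is what is used, mirroring exactly the two instances (``$[0,1]\se\pi(\ga)$'' and ``$\H(\pi(E))\le\eps$'') where Proposition~\ref{key}'s proof used curvature.

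With $E'$ in hand satisfying $\H(\pi(E'))\le\eps$ and $\be_\infty^\ga(x)\ge M$ for all $x\in\ga\sm E'$, I apply (the $\pi$-relaxed form of) Proposition~\ref{key}: given $L>1$, choose $M=M(\eps,CL+\text{const})$ so that any such curve has $\H(\ga)>CL+2C$ (inflating the target to compensate for the constants $C$ and the two extra unit segments). Then $\H(K)\ge \H(\widehat K)-2\ge \H(\ga)/C-2> L$, which is the claim. The main obstacle, and the only place requiring genuine care rather than bookkeeping, is the projection argument for the exceptional set: one must be sure that replacing $E$ by $E\cup(\ga\sm K)$ does not destroy the smallness of the $\pi$-image, and this is where hypothesis~(3) must be interpreted (or strengthened) to mean that the real trace of $K\sm E$ is all but an $\eps$-set of $[0,1]$, not merely that $\H(\pi(E))\le\eps$ in isolation — otherwise a compact $K$ could project to a tiny subset of $[0,1]$ and the corollary would fail. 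Once that is correctly set up, the rest is a direct invocation of Proposition~\ref{key} and Theorem~\ref{thmInc} with constants tracked through the inclusion $K\subseteq\ga$.
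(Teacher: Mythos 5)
Your overall architecture --- pass to the continuum $\widehat K:=K\cup\{0,1\}\times[-\frac12,\frac12]$, embed it in a curve $\ga$ with $\H(\ga)\leq C\H(\widehat K)$ via Theorem~\ref{thmInc}, and transfer hypothesis~(2) to $\ga$ through the monotonicity $\be_K\leq\be_\ga$ --- is the right one, and it is what the remark preceding the corollary intends. The gap is in the step where you feed $\ga$ into Proposition~\ref{key} as a black box with exceptional set $E'=E\cup(\ga\sm K)$: the (relaxed) hypothesis requires $\H(\pi(E'))\leq\eps$, and you have no control whatsoever on $\H(\pi(\ga\sm K))$. The inclusion you invoke to get it, $\pi(\ga\sm K)\se\pi(\ga)\sm\pi(K\sm E)$, is false: any point of the connecting arcs $\ga\sm\widehat K$ lying vertically above or below a point of $K\sm E$ violates it, and these arcs, whose total length is only bounded by $C\H(\widehat K)$, can perfectly well project onto all of $[0,1]$, making $\H(\pi(E'))=1$ and the proposition vacuous. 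A large projection of $\ga\sm K$ forces $\H(\ga)$ to be large but gives back nothing about $\H(K)$, so the argument cannot be closed this way.

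Two further points. First, your worry that hypothesis~(3) must be ``reinterpreted or strengthened'' because $K$ might project onto a tiny subset of $[0,1]$ is unfounded: hypothesis~(1) already forces $\pi(\widehat K)=[0,1]$ and hence $\pi(K)\supseteq(0,1)$, so $\H(\pi(K\sm E))\geq 1-\eps$ with no change to the statement. Second, the correct repair is not to enlarge the exceptional set but to open up the proof of Proposition~\ref{key} and run it on $K$ itself. The hypothesis on $E$ enters that proof only through the bound $|\B|\geq\frac{1-\eps}2$, where $\B$ covers $\ga\sm(K_n\cup E)$; replacing $\ga$ by $K$, the cover is of $K\sm(K_n\cup E)$, whose projection contains $\pi(K)\sm(\pi(K_n)\cup\pi(E))$ and so has measure at least $1-\frac{1-\eps}2-\eps=\frac{1-\eps}2$ by the observation above. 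Theorem~\ref{thmInc} is then used only to secure the upper bound $\sum_{Q\in\del}\be_K(Q)^2|Q|\apprle\H(K)$ (via $\be_K\leq\be_\Ga$ for a curve $\Ga\supseteq\widehat K$ of comparable length; if $\H(K)=+\infty$ there is nothing to prove) and the analogue of Lemma~\ref{lemLong}, for which one uses that $\widehat K$ is a continuum, so $\H(\widehat K\cap 3Q')\geq 2^{-k}|Q|$ for the relevant squares. With these two substitutions the counting argument of Proposition~\ref{key} goes through verbatim and yields $\H(K)>L$.
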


For a set $A\se\C$, let $||A||:=\{|x| : x\in A\}$.
We will need the following version of the previous corollary.
\begin{coro}
\label{coroDisk}
Let $\eps\in[0,1)$ and $L>1$. There exists $M>0$ such that every compact set $K\se\D(0,1)$ with the property that $0\in K$, $K\cup \pa\D(0,1)$ is connected, and
$$\be_\infty^K(x) \geq M$$
for all $x\in K\sm E$, $\H(||E||)\leq \eps$, satisfies
$$\H(K) > L.$$
\end{coro}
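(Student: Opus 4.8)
The plan is to reduce Corollary~\ref{coroDisk} to the previously established corollary for sets in the rectangle $[0,1]\times[-\frac12,\frac12]$ by a conformal change of coordinates that sends the disk to a long thin rectangle while essentially preserving $\beta$-numbers up to a controlled distortion. Concretely, I would first use the Koebe-type map (or simply a Möbius transformation followed by a logarithm) $\phi$ that unfolds $\D(0,1)$, with the marked point $0$ sent to one end; alternatively, and more in the spirit of this paper, compose with a conformal map of an annular neighborhood. The key point is that if $L'$ is large and we set up the geometry so that a curve joining $0$ to $\partial\D(0,1)$ must, after the change of coordinates, join the two short sides of a rectangle of aspect ratio $\gtrsim L'$, then length $>L'$ of the transformed object forces length $>L$ of the original (the conformal factor is bounded above and below away from the boundary, and near the boundary one only loses a universal factor). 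Since $K\cup\partial\D(0,1)$ is connected and $0\in K$, the transformed set does cross the rectangle.

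The second ingredient is the observation, already exploited in the Remark and in the discussion following Proposition~\ref{key}, that the proof of the rectangle-corollary only uses (i) $\beta_\infty \apprle \H$ for the relevant set — which holds because the set is contained in a curve via Theorem~\ref{thmInc} — and (ii) that the projection $\pi$ onto the real axis covers $[0,1]$ and that $\H(\pi(E))\le\eps$. In the disk setting the natural "projection" is the radial one $z\mapsto |z|$, which is exactly why the hypothesis is stated as $\H(\|E\|)\le\eps$ with $\|A\|=\{|x|:x\in A\}$. So I would verify that under $\phi$, the radial projection on the disk corresponds (up to bounded distortion, hence up to adjusting $\eps$ by a universal factor and $L$ by a universal factor) to the horizontal projection on the rectangle: the curves $|z|=\text{const}$ map to near-vertical fibers, so $\H(\pi(\phi(E)))\apprle \H(\|E\|)$. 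This lets me invoke the relaxed form of Proposition~\ref{key} / its first corollary with parameters $\eps' = C\eps$ and $L' = CL$ and extract the required threshold $M$ (which depends only on $\eps$ and $L$, and in particular not on the ambient dimension in the $\R^d$ version by the Remark).

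The main obstacle I anticipate is handling the distortion of the conformal change of coordinates near $\partial\D(0,1)$, where the map degenerates: the hypothesis only controls $\beta_\infty^K(x)$ for $x\in K\setminus E$ and controls $E$ through its radial projection, so one must make sure that the portion of $K$ close to the boundary circle does not secretly carry most of the length while escaping both hypotheses. The clean way around this is to not map all the way to the boundary: fix a radius $\rho$ slightly less than $1$, apply the argument to $K\cap\D(0,\rho)$ together with $\partial\D(0,\rho)$ (still connected after intersecting, as in the localization remark in the introduction), choose $\rho$ so that the modulus of the annulus $\rho<|z|<1$ is a small but fixed amount, and absorb the resulting loss into the universal constants. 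One then checks that $\beta$-numbers only increase under the conformal map restricted away from the boundary (it is close to affine at small scales, so $\beta_{\phi(K)}^\infty(\phi(x))\gtrsim \beta_K^\infty(x)$ up to a universal factor), so condition (2) transfers. A minor auxiliary point, worth a sentence, is that $0\in K$ plus connectedness of $K\cup\partial\D(0,1)$ guarantees $K$ meets every circle $|z|=t$, $t\in[0,1)$, which is what makes the crossing-curve mechanism applicable after passing to a containing curve via Theorem~\ref{thmInc}. Assembling these pieces gives $\H(K)\ge \H(K\cap\D(0,\rho)) > L$ for $M$ large enough, which is the claim.
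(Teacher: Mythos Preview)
Your overall strategy---change coordinates so that circles $|z|=r$ become vertical fibers, then invoke the rectangle corollary with radial projection playing the role of $\pi$---is exactly the paper's. The paper's execution differs from yours in one point that matters. It does \emph{not} truncate near $\pa\D(0,1)$; instead it removes a small disk about the origin and works on the annulus $\ol{\mathbb A(\eps/2,1)}$, which it unfolds by the polar-coordinate map $re^{i\theta}\mapsto(r,\theta/2\pi)$ (then rescales linearly to $Q_0$). This map is not conformal but merely bi-Lipschitz with distortion depending only on $\eps$, which is all one needs for $\be_K(x,t)\sim\be_{\varphi(K)}(\varphi(x),|\varphi'(x)|t)$ and hence $\be_\infty^{\varphi(K)}\gtrsim C(\eps)M$. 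The part $K\cap\D(0,\eps/2)$ that is discarded has radial projection inside $[0,\eps/2]$, so it is absorbed into the exceptional set at no cost.

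Your stated obstacle---degeneration of the change of coordinates near $\pa\D(0,1)$---is misplaced for this family of maps: any map sending the circles $|z|=r$ to vertical lines is essentially $\log$, and $\log$ is perfectly tame on $\{|z|=1\}$ but singular at $0$. Truncating at $\rho<1$ as you propose leaves the origin in the domain and does not cure the actual blow-up; truncating at $|z|=\eps/2$, as the paper does, both removes the singularity and keeps the bookkeeping for $\H(\pi(\varphi(E)))$ trivial. With that correction your sketch becomes the paper's proof.
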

\begin{proof}
We first unfold the annulus $\overline{\mathbb A(\frac\eps2,1)}$ to the rectangle $[\frac\eps2,1]\times[-\frac12,\frac12]$ and then map it linearly to $Q_0=[0,1]\times[-\frac12,\frac12]$. The distortion and the dilatation of this map $\varphi$ is bounded by a constant which depends only on $\eps$. Therefore $\be_K(x,t)\sim\be_{\varphi(K)}(\varphi(x),|\varphi'(x)|t)$. We obtain that 
$$\be_\infty^{\varphi(K)}(x)>C(\eps) M$$ 
for all $x\in\varphi(K)\sm \varphi(E)$ with $\H(\pi(\varphi(E)))$ close to $0$ in terms
of $\epsilon$ only. 
All other hypothesis of the previous corollary are easy to check.
\end{proof}

\begin{defi}
A set $E\ss\R^d$ with $d\geq 2$ is \emph{Ahlfors regular} if there exists $C>1$ such that for all $x\in E$ and $0<R<\diam E$,
$$C^{-1}R\leq \H(E\cap B(x,R))\leq CR.$$
\end{defi}

We will use the following result due to the second author and  Bishop (Theorem 1 in \cite{bijo}).

\begin{theo}
\label{thmBJ97}
There exists $C>0$ such that if $K\ss\R^d$ with $d\geq 2$ is a compact set of diameter $1$ and if for all $x\in K$,
$$\be_\infty^K(x)\leq M,$$
then $K$ lies on a rectifiable curve $\Ga$ of length at most $Ce^{CM}$ and which is Ahlfors regular with constant depending only on $M$.
\end{theo}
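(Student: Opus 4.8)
The plan is to build $\Ga$ by a corona (stopping time) construction in which the pointwise bound $\be_\infty^K(x)\le M$ plays the role usually played by a global Carleson condition, and then to read off the exponential size of $\Ga$ from the length accumulated over the boundedly many generations of stopping cubes. Throughout I would pass, via \requ{IS}, to the discrete quantity $\sum_{Q\in\del(x)}\be_K(Q)^2\lesssim M$ for $x\in K$ (with $\be_K(Q)$ the dyadic $\be$-number of Definition~\ref{defi:2}), fix one dyadic cube $Q_0\supseteq K$ of side $\sim 1$, and give the argument in the plane, the modifications for $\R^d$ being the usual replacement of lines by $1$-dimensional Lipschitz graphs as in \cite{oki}. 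The first reduction is that it is enough to produce a curve $\Ga\supseteq K$ together with
$$\sum_{Q\se R,\ 3Q\cap K\ne\es}\be_K(Q)^2|Q|\ \lesssim\ e^{C'M}|R|\qquad\text{and}\qquad \H(\Ga\cap R)\ \lesssim\ e^{C'M}|R|$$
for every dyadic $R\se Q_0$, with $C'$ absolute: taking $R=Q_0$ gives $\H(\Ga)\lesssim e^{C'M}$, the family of estimates gives the upper Ahlfors bound, and the lower bound $\H(\Ga\cap B(x,r))\gtrsim r$ is automatic because $\Ga$ is a connected set of diameter $\gtrsim 1$, so a subcontinuum of $\Ga$ joins $x$ to $\pa B(x,r)$.

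For the corona decomposition I would fix a small dimensional $\eta>0$ and, starting from $Q_0$, call $Q\se R$ a \emph{stopping cube} for a generating cube $R$ if $Q$ is maximal with $\sum_{Q\se Q'\se R}\be_K(Q')^2\ge\eta$; the corona $\Cal S(R)$ then consists of $R$ together with the cubes strictly between $R$ and its stopping cubes. Two features drive the argument. First, on $\Cal S(R)$ the $\be$-numbers are summably small, $\sum_{Q'\in\Cal S(R),\,Q'\ni x}\be_K(Q')^2<\eta$, so by the standard Jones--David construction (\cite{jo1990,pajo}) $K\cap 3R$ lies within $O(\sqrt\eta)\,|R|$ of a single Lipschitz graph of small constant over a segment of length $\lesssim|R|$; the length defect of such a graph is $O(\eta)\,|R|$, so the piece of curve built over $\Cal S(R)$ has length $\le(1+C\eta)|R|$ and its stopping cubes tile, with bounded overlap, a curve of that length, whence $\sum_{Q\text{ stopping for }R}|Q|\le(1+C\eta)|R|$. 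Second, each stopping cube adds at least $\eta$ to $\sum_{Q'\ni x}\be_K(Q')^2\lesssim M$, so along any branch of the cube tree there are at most $CM/\eta$ nested stopping cubes: the coronas form a tree of depth $\le CM/\eta$.

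Iterating the estimate $\sum_{Q\text{ stopping for }R}|Q|\le(1+C\eta)|R|$ down the $\le CM/\eta$ generations of this corona tree, the side lengths of all generating cubes sum to at most $(1+C\eta)^{CM/\eta}\lesssim e^{C'M}$, with $C'$ absolute once $\eta$ has been fixed. Summing the length bound of the first feature over all coronas then gives $\H(\Ga)\lesssim\sum_R(1+C\eta)|R|\lesssim e^{C'M}$ and, in localized form, the Carleson estimate of the reduction step; concatenating the Lipschitz pieces along the corona tree, closing up at the $\le CM/\eta$ branch points inside each generating cube at a cost $\lesssim|R|$, produces the curve $\Ga\supseteq K$.

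The main obstacle is the first feature together with its iteration: the quantitative statement that summably small $\be$-numbers on a corona force $K$ to sit near one Lipschitz graph with length defect only $O(\eta)\,|R|$ is the geometric core, and it is exactly the $(1+C\eta)$-per-generation factor it produces that, compounded over $O(M/\eta)$ generations, converts the linear input $M$ into the exponential output $e^{C'M}$. That the honest bound is genuinely exponential and not polynomial is forced already by the fact that in $\R^d$, $d\ge 2$, the side lengths of the $2^d$ dyadic children of $Q$ sum to $2^{d-1}|Q|\ge|Q|$, so a naive telescoping of $\sum_Q\be_K(Q)^2|Q|$ does not close. Upgrading the global length bound to the Ahlfors-regular, localized statement requires carrying the Carleson book-keeping through the whole construction; this is routine but notationally heavy and in the plane follows \cite{jo1990,pajo}.
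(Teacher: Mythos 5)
Your plan is a sound outline of the Bishop--Jones corona argument, but it re-proves something the paper deliberately does not: the length bound $Ce^{CM}$ in Theorem~\ref{thmBJ97} is Theorem~1 of \cite{bijo}, imported as a black box, and the only thing the paper actually proves is the Ahlfors regularity, observed to be ``an immediate consequence of the bound for the length.'' The paper's regularity argument is much softer than yours: take $\Ga$ to be a length-minimizing curve containing $K$ (a minimizer exists as a limit of curves of uniformly bounded length); given a square $Q$ centered at $x\in K$, rescale $\ol Q\cap K$ to diameter at most $1$ --- the hypothesis $\be_\infty^K\leq M$ survives restriction and rescaling --- apply the theorem as a black box to get a curve of length at most $Ce^{CM}$, scale back, attach its endpoints to $\pa Q$ and adjoin $\pa Q$; the resulting competitor replaces $\Ga\cap Q$, and minimality gives $\H(\Ga\cap Q)\leq (5+Ce^{CM})|Q|$. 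This avoids entirely the localized Carleson bookkeeping you flag as ``routine but notationally heavy,'' at the price of a nonconstructive choice of $\Ga$. Conversely, your self-contained route correctly identifies the source of the exponential (the $(1+C\eta)$ per-generation length gain compounded over $O(M/\eta)$ corona generations), which is exactly the mechanism of \cite{bijo}; but the two load-bearing steps you defer --- that summably small $\be$-numbers over a corona trap $K$ in an $O(\sqrt\eta)\,|R|$-tube around a single small-constant Lipschitz graph, and that the stopping cubes then have boundedly overlapping projections onto it, giving $\sum|Q|\leq(1+C\eta)|R|$ --- are the entire content of the theorem, so as written this is a correct plan rather than a proof. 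One technical point to watch: the tree-depth bound $k\lesssim M/\eta$ needs a point $x\in K$ lying in (not merely near) every cube of a nested stopping chain, since $\be_\infty^K(x)$ is summed over $\del(x)$; you should either work with a lattice adapted to $K$ or compare $\be_K(Q)$ with the $\be$-number of a comparable dyadic cube genuinely containing a point of $K\cap 3Q$.
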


The fact that $\Ga$ is Ahlfors regular is not stated in \cite{bijo} but is an immediate consequence of the bound for the length.
\begin{proof}
As a limit of rectifiable curves with uniform bounded length containing $K$, we may assume that $\Ga$ realizes the infimum of the length of such curves. We consider a square $Q$ centered at $x\in K$ and apply the theorem to the set $|Q|^{-1}(\ol Q\cap K)$. We obtain a curve $\Ga'$ of length at most $Ce^{CM}$. We connect the endpoints of $|Q|\Ga'$ to $\pa Q$ and take the union with $\pa Q$ to obtain $\ga'$. By the choice of $\Ga$ and our construction,
$$\H(\Ga\cap Q) \leq \H(\ga')\leq |Q|(5 + Ce^{CM}).$$
\end{proof}

\begin{proof}[Proof of Theorem~\ref{thmMainTech}]
We recall that $F$ denotes the subset of $K$ of points that are not wiggly. 
Take a Borel set $E\ss K$ containing $F$ with $\H(E)<+\infty$. 
Assume that $\H(F)>0$ and $\H(K\sm F) > 0$. 
The density theorem for $\H$
(see Theorem~6.2 in \cite{matilla}) implies  that for almost all points  $x\in K\setminus E$ with respect to $\H$,
$$\limsup_{r\rightarrow 0} \frac{\H(B(x,r)\cap E)}{2r} =0\;. $$
Therefore, there is a ball $B=\D(x,R)$, $x\in K$, such that $$\H(E\cap B) < \eps R.$$
We will construct a  measure $\mu$ with the density properties as claimed in Theorem~\ref{thmMainTech}, supported on $B\cap K$,  such that $\mu(E)=0$. Let $B_0:=B$ and $\eps < \frac{1}{100}$.

The proof has three steps. 
\begin{enumerate}
\item For any ball $B=\D(x_B,R_B)$, $x_B\in K$ and such that $\H(B\cap E)<\frac{R_B}{100}$ we construct a measure $\mu_B$.
\item We construct the probability measure $\mu$ on $B_0$.
\item We show that $\mu$ has the desired scaling properties and that $\mu(E)=0$.
\end{enumerate}

\textbf{Step 1.} Let $B=\D(x_B, R_B)$ with $R_B\leq 1$, $x_B\in K$, such that $\H(B\cap E)<\eps R_B$ (with $0<\eps\leq \frac 1{100}$). Let $K_B:=K\cap\ol B$ and observe that $K_B\cup\pa B$ is a connected compact. For any $x\in K_B$, let
\newcommand{\intbes}[3]{\ensuremath{\int_{#1}^{#2}\be^2_{#3}(x,t)\frac{dt}t}}
\newcommand{\intbe}[2]{\ensuremath{\intbes{#1}{#2}{K_B}}}
$$t_B(x):=\inf\left\{r\in (0, R_B) : \intbe r{R_B} \leq M \right\},$$
where $M$ is a large constant that will be specified later.
Observe that $t_B(x)=0$ if and only if $\intbe 0{R_B}\leq M$. Let
$$Z(B):=\ol{\{ x\in K_B : t_B(x)=0\}}.$$
One can easily check that for any $x\in Z(B)$,
$$\intbes 0{R_B}{Z(B)} \leq \intbe 0{R_B} \leq 4M,$$
therefore $Z(B)\se F \se E$. We obtain that $\H(Z(B)) < \eps R_B$.

We show that there exists $C_M>0$, that depends only on $M$, and a measure $\mu_B$ such that
\begin{equation}
\label{equMuB}
\begin{array}{ll}
\text{1.} & R_B \leq \mu_B(\C) = \mu_B(B) \leq C_M R_B, \vspace{1mm}\\
\text{2.} & \mu_B(\D(x,r)) \leq C_M r, \text{ for all } x\in\C \text{  and } r>0.
\end{array}
\end{equation}

Set $W(B):=K_B \sm Z(B)$. By the standard covering lemma (see \cite{hein}, page 2), there exists a countable set $X'(B)\ss W(B)$ such that 
$$W(B)\se \bigcup_{x\in X'(B)}\D(x,10t_B(x)),$$
and the balls $\D(x, 2t_B(x))$, $x\in X'(B),$ are pairwise disjoint. 

From now on we make a standing assumption (it is enough to take $M\geq 4\log 10$) that for all $x\in X'(B)$,
\begin{equation}
\label{equSR}
t_B(x)\leq 10^{-4}R_B.
\end{equation}

The following lemma provides a key estimate which allows to distribute the measure $\mu$ on the balls $\D(x,t_B(x))$ centered at $X(B)$ and prove the scaling properties of $\mu$ in Step~3.
\begin{lem}
\label{lemTen}
If $M$ is sufficiently large, then there exists $X(B)\se X'(B)$ such that for each $x\in X(B)$, $\H(\D(x,t_B(x))\cap E)<\frac\eps 2\, t_B(x)$,
$$\sum_{x\in X(B)}t_B(x) \geq 10 R_B,$$
and
$$\sum_{x\in X(B)}\H(\D(x,t_B(x))\cap E) \leq \frac{\eps}2 R_B.$$
\end{lem}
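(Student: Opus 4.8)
The plan is to obtain $X(B)$ from $X'(B)$ in two stages: first prove the length bound $\sum_{x\in X'(B)}t_B(x)\gtrsim R_B$ with a constant as large as we please (at the cost of taking $M$ large), and then throw away the indices where $E$ is abnormally concentrated, checking that the bound survives. For the first stage I would introduce a \emph{stopped continuum} $\tilde K_B$: start from $K_B$, delete $K_B\cap\D(x,10 t_B(x))$ for every $x\in X'(B)$, and add back the circles $\pa\D(x,10 t_B(x))$; after rescaling $B$ to the unit disk, $\tilde K_B\cup\pa B$ is still a continuum, since the added circles reconnect the stumps left by the deletions, and a harmless short segment takes care of reaching the centre and of the finitely many balls meeting $\pa B$. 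Because $W(B)\se\bigcup_{x\in X'(B)}\D(x,10 t_B(x))$, the part of $\tilde K_B$ inherited from $K_B$ lies in $Z(B)$, so
$$\H(\tilde K_B)\le \H(Z(B))+\sum_{x\in X'(B)}2\pi\cdot 10\, t_B(x)\le \eps R_B + 70\sum_{x\in X'(B)}t_B(x).$$

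The key point is that $\tilde K_B$ remains wiggly: I claim $\be_\infty^{\tilde K_B}(y)\gtrsim M$ for every $y$ on one of the inserted circles. Indeed such a $y$ lies on $\pa\D(x,10 t_B(x))$ for some $x\in X'(B)$, so $|y-x|=10 t_B(x)$; for scales $s\in(20 t_B(x),R_B)$ the ball $B(y,s)$ contains $\D(x,10 t_B(x))$, and outside the modification balls $\tilde K_B$ coincides with $K_B$ while inside each of them both sets are confined to that ball, so the $\beta$-number is stable under this Hausdorff-small perturbation and
$$\be_\infty^{\tilde K_B}(y)\;\gtrsim\;\int_{20 t_B(x)}^{R_B}\be_{K_B}^2(x,t)\frac{dt}{t}-O(1)\;=\;M-O(1),$$
the value $M$ of the integral from $t_B(x)$ to $R_B$ being forced by the infimum defining $t_B(x)$ and the slab $(t_B(x),20 t_B(x))$ contributing a bounded amount. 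Feeding this into Corollary~\ref{coroDisk} with exceptional set $Z(B)$ (for which $\H(\|Z(B)\|)/R_B<\eps<1$) gives $\H(\tilde K_B)>L R_B$ for any prescribed $L$, once $M=M(L)$ is large enough; combined with the displayed upper bound this yields $\sum_{x\in X'(B)}t_B(x)\ge(L-\eps)R_B/70$, which is $\ge 14R_B$ after fixing $L$ (hence $M$) large.

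For the second stage put $X(B):=\{x\in X'(B):\H(\D(x,t_B(x))\cap E)<\tfrac\eps2 t_B(x)\}$, which gives the first conclusion by definition. For $x\in X'(B)\setminus X(B)$ we have $\tfrac\eps2 t_B(x)\le\H(\D(x,t_B(x))\cap E)\le\H(\D(x,2t_B(x))\cap E)$, and since the balls $\D(x,2t_B(x))$, $x\in X'(B)$, are pairwise disjoint and (by $(\ref{equSR})$) contained in a $(1+2\cdot10^{-4})$-dilate of $B$ on which $\H(E)\lesssim\eps R_B$, summation gives $\sum_{X'(B)\setminus X(B)}t_B(x)\lesssim R_B$; subtracting from $\sum_{X'(B)}t_B(x)\ge14 R_B$ leaves, after fixing the constants, $\sum_{X(B)}t_B(x)\ge 10 R_B$. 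The third conclusion is then immediate from disjointness: $\sum_{x\in X(B)}\H(\D(x,t_B(x))\cap E)\le\H\!\left(E\cap\bigcup_{x\in X(B)}\D(x,2t_B(x))\right)\le\tfrac\eps2 R_B$, the factor $\tfrac12$ being absorbed either by running Step~1 with $\eps/2$ in place of $\eps$ or by the dilation estimate for $\H(E)$.

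I expect the main obstacle to be the wiggliness-preservation claim for $\tilde K_B$: one must show that deleting $K_B$ inside the stopping balls does not annihilate, near an inserted circle, the $\beta^2$-mass accumulated by $K_B$ at scales between $t_B(x)$ and $R_B$. The delicate case is a large stopping ball $\D(x',10 t_B(x'))$ sitting at a scale $s\gg t_B(x)$ around $x$; this is controlled by the disjointness of the family $\{\D(x',2t_B(x'))\}$ — at any fixed scale only boundedly many comparably-sized stopping balls can be near $x$, the smaller ones contribute a geometrically summable perturbation of $\be$, and the inserted circles, being smooth, can only add and not cancel wiggliness at the point $y$ under consideration. Bookkeeping of these $O(1)$ errors, uniformly over all $y$ and all admissible $B$, is the one genuinely technical ingredient.
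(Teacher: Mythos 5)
Your construction is essentially the paper's own: the paper's set $S(B)$ is your $\tilde K_B$ with an extra inner circle $\pa\D(x,2t_B(x))$ and a diameter segment attached at each $x\in X'(B)$, the exceptional set is $Z'(B)=Z(B)\sm\bigcup_{x\in X'(B)}\D(x,10t_B(x))$, and Corollary~\ref{coroDisk} is applied exactly as you propose to convert a length lower bound into $\sum_{x\in X'(B)}t_B(x)\geq 23R_B$. The selection of $X(B)$ is also the same in spirit, except that the paper first splits $X'(B)$ into two halves each carrying at least $11R_B$ of $\sum t_B(x)$ and keeps the half on which $\sum_x\H(\D(x,t_B(x))\cap E)\leq\frac\eps2 R_B$; this pigeonhole is what secures the factor $\frac12$ in the third conclusion (needed later for the geometric decay of $\H(E\cap\spt\mu_n)$ in property (P1)), which your direct summation only gives up to a factor $2$, and neither of your proposed fixes recovers cleanly.

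The genuine gap is the one you flag yourself: the bound $\be_\infty^{\tilde K_B}(y)\gtrsim M$ for $y\in\pa\D(x,10t_B(x))$. Your formula $\int_{20t_B(x)}^{R_B}\be^2_{K_B}(x,t)\frac{dt}t-O(1)$ is anchored at the center $x$ of the circle carrying $y$, but only the balls $\D(x',2t_B(x'))$ are disjoint; the balls $\D(x',10t_B(x'))$ may overlap, so $y$ can sit deep inside a much larger stopping ball $\D(x',10t_B(x'))$ with $t_B(x')\gg t_B(x)$. For scales $s\in(20t_B(x),t_B(x'))$ the ball $\D(y,s)$ is then swallowed by $\D(x',10t_B(x'))$, where \emph{all} of $K_B$ has been deleted; no counting of comparably-sized balls and no geometric summation of perturbations recovers $\be_{K_B}(x,s)$ there, because the wiggliness of $K_B$ at those scales is genuinely gone from $\tilde K_B$. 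The paper's resolution is not perturbative: it sets $t_B(x_0)=\sup\{t_B(x'):x'\in X'(B),\ |x'-y|<20t_B(x')\}$, restricts to scales $t\geq 10^3t_B(x_0)$ (so that no stopping ball can contain $\D(y,t)$), and proves the pointwise monotonicity $\be_{S(B)}(y,t)\geq\be_{K_B}(y,t)$: any deleted point $z\in K_B\sm S(B)$ lies in some disk $\D(x',10t_B(x'))$ whose boundary circle is retained in $S(B)$, and since the $t\be_{S(B)}(y,t)$-neighborhood of the minimizing line is convex, the visible part of that circle forces $z$ into it. Combined with $\D(x_0,10^{-2}t)\ss\D(y,t)$ this yields $\be_{S(B)}(y,t)\geq 10^{-2}\be_{K_B}(x_0,10^{-2}t)$ and hence $\be_\infty^{S(B)}(y)\geq 10^{-4}M-O(1)$, with the integral anchored at $x_0$ and truncated at $10^3t_B(x_0)$, not at $20t_B(x)$. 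You should replace the ``stability under Hausdorff-small perturbation'' claim by this argument; as sketched, your mechanism fails precisely in the swallowing configuration you identify as delicate.
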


It is not hard to check that the set $Y=Z(B)\cup X'(B)$ is compact. Also, one can check that for every  $x\in Y$, $\intbes 0{R_B}{Y} \leq 100 M$. By Theorem~\ref{thmBJ97}, the set $Y$ is contained in an Ahlfors regular curve $\Ga_B$ with the length comparable to $R_B$ (and the constant depending only on $M$). As the balls $\D(x, 2t_B(x))$  from the the same
$\mathcal G_i(B)$ are pairwise disjoint, we may assume that $\Ga_B$ contains a cross 
$$G(x):=[x-t_B(x), x+t_B(x)]\cup[x - it_B(x), x + it_B(X)]$$ 
for every  $x\in X'(B)$. We define 
$$\mu_B:=\H_{|G_B}, \text{ where }G_B:=\bigcup_{x\in X(B)} G(x).$$
By the properties of $\Ga_B$ it is easy to check that $\mu_B$ satisfies the inequalities (\ref{equMuB}).

The following proof concludes Step 1.
\begin{proof}[Proof of Lemma \ref{lemTen}]
We use a geometric construction to show that if $M$ is sufficiently large, then
\begin{equation}
\label{equThirty}
\sum_{x\in X'(B)}t_B(x) \geq 23 R_B,
\end{equation}

As $t_B(x)\leq 10^{-4}R_B$ for all $x\in X'(B)$, there is a partition of $X'(B)=X_1(B)\cup X_2(B)$ such that for each $i\in\{1,2\}$,
$$\sum_{x\in X_i(B)}t_B(x) \geq 11 R_B.$$
As $\H(B\cap E)<\eps R_B$, there is $i_0\in\{1,2\}$ such that 
$$\sum_{x\in X_{i_0}(B)}\H(\D(x,t_B(x))\cap E) \leq \frac{\eps}2 R_B.$$

Let $X(B)=\{x\in X_{i_0}(B)\ :\ \H(\D(x,t_B(x))\cap E)<\frac\eps 2\, t_B(x)\}$ and suppose that
$$\sum_{x\in X_{i_0}(B)\sm X(B)}t_B(x) \geq R_B.$$
Then 
$$\sum_{x\in X_{i_0}(B)\sm X(B)}\H(\D(x,t_B(x))\cap E) \geq \frac{\eps}{2}\, R_B,$$
which contradicts the choice of $i_0$, as balls $\D(x,t_B(x))$ with $x\in X_{i_0}(B)$ are disjoint.

In the sequel, we prove the inequality \requ{Thirty}. For every $x\in X'(B)$, let
$$H(x):=\pa\D(x,2t_B(x))\cup\pa\D(x,10t_B(x))\cup [x-10t_B(x), x+10t_B(x)].$$
Then $H(x)$ is connected and $\H(H(x)) \leq 100 t_B(x)$.
\def\xxb{{x\in X'(B)}}
Let 
$$Z'(B):=Z(B)\sm\bigcup_\xxb \D(x,10t_B(x)).$$ 
Observe that $K_B\ss Z'(B)\cup\bigcup_{\xxb} \D(x,10t_B(x))$. Let us  define
$$S(B):=Z'(B)\cup\bigcup_\xxb H(x).$$
As $\H(Z'(B)) \leq 10^{-2}R_B$, it is enough to show that $\H(S(B)) \geq 2301 R_B$ in order to prove the inequality \requ{Thirty}.
We may assume that for any $x\neq x'\in X'(B)$, $\D(x,10t_B(x)) \not\ss \D(x',10t_B(x'))$. The reader can therefore check that $S(B)$ is compact and $S(B)\cup\pa B$ is a connected compact.
We will show that for every $x\in S(B)\sm Z'(B)$,
\begin{equation}
\label{equTrans}
\be_\infty^{S(B)}(x) \geq 10^{-5}M,
\end{equation}
provided $M$ is large enough. Having established  the estimate~(\ref{equTrans}), we can conclude the proof of the inequality ~\requ{Thirty} by applying Corollary \ref{coroDisk} to $S(B)$ with $Z'(B)$ as an exceptional set, $\eps=10^{-2},L=2301$ and $M$ large enough.

We still have to prove the inequality \requ{Trans}. For this, let $y\in S(B)\sm Z'(B)$. We can find $x_0\in X'(B)$ such that 
$$t_B(x_0)=\sup\{t_B(x) : x\in X'(B), |x-y|<20t_B(x)\}.$$
We will show that for $t \geq 10^3t_B(x_0)$,
\begin{equation}
\label{equBeTr}
\be_{S(B)}(y,t) \geq 10^{-2}\be_{K_B}(x_0, 10^{-2}t).
\end{equation}
To this end, observe that by the choice of $x_0$ and $t$, we have $\D(x_0,10^{-2}t)\ss \D(y,t)$, and therefore
$$\be_{K_B}(y,t) \geq 10^{-2}\be_{K_B}(x_0, 10^{-2}t).$$ 
It is enough to prove that 
\begin{equation}\label{equ:bl}
\be_{S(B)}(y,t) \geq \be_{K_B}(y,t).
\end{equation}
Let $L$ be a line minimizing $\be_{S(B)}(y,t)$ and take a point  $z \in \D(y,t)\cap (K_B \sm S(B))$. 
In particular, $z \in K_B \sm Z'(B)$ so there is $x\in X'(B)$ such that $z\in \D(x,10t_B(x))$. By the choice of $x_0$,   
$\D(y,t)\not\ss \D(x, 10t_B(x))$.  As $\pa\D(x, 10t_B(x))\ss S(B)$, we have that $\pa\D(x, 10t_B(x)) \cap \D(y,t)$ is contained in a $t\be_{S(B)}(y,t)$ neighborhood of $L$, and so is $z$. The inequality (\ref{equ:bl}) is now established.

For each $y\in S(B)\sm Z'(B)$, we integrate the inequality \requ{BeTr} and obtain
$$
\begin{array}{rcl}
\int_{10^3t_B(x_0)}^{R_B}\be^2_{Y(B)}(y,t)\frac{dt}t & \geq & 10^{-4}\int_{10^3t_B(x_0)}^{R_B}\be^2_{K_B}(x_0,t)\frac{dt}t\\
& \geq & 10^{-4}M-3\log 10\\
& \geq & 10^{-5}M,
\end{array}
$$
if $M$ is large enough. This proves the inequality \requ{Trans}.
\end{proof}

\textbf{Step 2.} We define a sequence $(\mu_n)_{n\geq 0}$ of probability measures supported respectively on a decreasing sequence of compact neighborhoods of $K$, having $K$ as their intersection. The measure $\mu$ is then a weak limit of this sequence. 

For each $n\geq 0$, we define a collection of disjoint balls $\F_n$ as follows. Let $\F_0:=\{B_0\}$ and 
$$\F_{n+1}:=\{\D(x,t_B(x)) : x\in X(B), B\in \F_n\}.$$
Let $B_1,B_2\in\F_n$, $B_1',B_2'\in\F_{n+1}$ such that $B_1'=\D(x,t_{B_1}(x))$ with $x\in X(B_1)$ and respectively $B_2'=\D(y,t_{B_2}(y))$ with $y\in X(B_2)$. As $2B_1$ and $2B_2$ are disjoint and $t_B(x)\leq 10^{-3}R_{B_1}$, $t_B(y)\leq 10^{-3}R_{B_2}$ we obtain  that
$$B_1'\cap B_2'=\es, B_1\cap B_2'=\es \text{ and } B_1'\cap B_2=\es.$$

We will use the measures $\mu_B$ constructed at the previous step to define inductively the sequence $(\mu_n)_{n\geq 0}$. Let
$$\mu_0:=\frac{\mu_{B_0}}{\mu_{B_0}(\C)}.$$
Assume that $\mu_0,\ldots,\mu_n$ have been defined with the following properties.
\begin{description}
\item{(P1)} $\spt\mu_n$ is contained in the disjoint union of balls in $\F_n$
which is contained in a $10^{-n}$-neighborhood of $K$. Also $\H(E\cap \spt \mu_n) < \frac{2^{-n}}{100} R$ by Lemma \ref{lemTen}, where $R$ is the radius of $B_0$.
\item{(P2)} $\mu_n(B) \leq 10^{-k}R^{-1}R_B$ if $B\in\F_k$ for some $k\leq n$ ($R_B$ is the radius of $B$).
\end{description}
We define $\mu_{n+1}$ in the following way. For any ball $B\in\F_{n+1}$, 
\begin{equation}\label{equ:def}
\mu_{n+1|B}:=\frac{\mu_n(B)}{\mu_B(B)}\mu_B.
\end{equation}

Observe that $\mu_{n+1}(B)=\mu_n(B)$ and that by Lemma \ref{lemTen}, for every $x\in X(B)$ we have
$$\frac{\mu_{n+1}(\D(x, t_B(x)))}{\mu_{n+1}(B)}=\frac{t_B(x)}{\sum_{y\in X(B)}t_B(y)}\leq\frac{t_B(x)}{10 R_B}.$$

Properties (P1) and (P2) for $\mu_{n+1}$ are direct consequences of the inequality \requ{SR} and Lemma \ref{lemTen} and the choice of $X(B)\se X'(B)$ for any ball $B$.

\textbf{Step 3.} Observe that if $B\in\F_n$ is a ball of radius $R_B$, then 
\begin{equation}
\label{equSBou}
\mu(B)\leq 10^{-n}R^{-1}R_B.
\end{equation}
We want to show that for every $x\in K$ and $r>0$,
\begin{equation}
\label{equBou}
\mu(\D(x,r))\leq CR^{-1}r\exp\left(-C'\intbes r1K\right),
\end{equation}
where $C,C'>0$ are universal constants. 

Let us note that it is enough to prove this bound for $x\in\spt\mu$. Otherwise, we have either $\spt\mu\cap \D(x,r)=\es$, so $\mu(\D(x,r))=0$, or for some $y\in \spt\mu\cap \D(x,r)$, $\D(x,r)\ss\D(y,2r)$, so
$$\mu(\D(x,r))\leq \mu(\D(y,2r))\leq 2CR^{-1}r\exp\left(-C'\int_{2r}^{1}\be_{K}(y,t)^2\frac{dt}t\right).$$
As $|x-y|<r$, a simple computation shows that 
$$1 + \intbes r1K \sim 1 + \int_{2r}^{1}\be_{K}(y,t)^2\frac{dt}t,$$ 
with  universal constants.

Fix $x\in\spt\mu$ and $r>0$. If $r\apprge R$ there is nothing to prove. We have $\{x\}=\cap_{n\geq 0}B_n$, where $B_n\in\F_n$ for all $n>0$. Let 
$$N:=\max\{n : \D(x,r)\se 2B_n\}.$$
A direct  computation leads to  
\begin{equation}
\label{equNM}
\intbes r1K \apprle (N+4)M.
\end{equation}
On the other hand, 
$$\mu(2B_N) \leq 10^{-N}R^{-1}R_{B_N}.$$
If $r\sim R_{B_N}$, the last two inequalities and the properties \requ{MuB} imply the conclusion \requ{Bou}.

Suppose that $r<10^{-6}R_{B_N}$. For any $B\in\F_{N+1}$ which intersects $\D(x,r)$, we have $R_B \leq 4r$. Otherwise $\D(x,r)\se 2B$ which contradicts the choice of $N$. Thus $B\se\D(x,10r)$. Denote $\mathcal K:=\{B\in\F_{N+1} : B\cap\D(x,r)\neq\es\}$. Using the inequality \requ{SBou} and the fact that $\Ga_{B_N}$ constructed in Step 1 is Ahlfors regular, we can estimate
$$
\begin{array}{rcl}
\mu(\D(x,r)) & \leq & \sum_{B\in\mathcal K}\mu(B) \leq 10^{-N-1}R^{-1}\sum_{B\in\mathcal K}R_B \\
& \leq & c10^{-N-1}R^{-1}\H(\Ga_{B_N}\cap \D(x,10r))\\
& \leq & c'10^{-N}R^{-1}r,
\end{array}
$$
where $c,c'>0$ are universal constants. Combined with the inequality \requ{NM}, this proves the conclusion \requ{Bou}.

By (P1), $\H(E\cap \spt\mu)=0$. The inequality~\requ{Bou} implies that 
$$\mu(\D(x,r))\leq CR^{-1}r$$  
for every ball $\D(x,r)$, $x\in K$, $r\leq R$ and thus the measure $\mu$ vanishes on $E\supset F$.
\end{proof}

\subsection{Proof of Theorem~\ref{thmMain}}
For simplicity, suppose that $\diam{K}=1$. Theorem~\ref{thmMainTech} supplies  a Radon  probability measure $\mu$ with $\mu(W)=1$.

For every $x\in W$ we define a measurable function 
$\beta(x): W \mapsto [0, +\infty)$ by
$$\liminf_{r\ra 0}\frac{\int_r^{1}\be_K^2(x,t)\frac{dt}t}{-\log r}= \beta^2(x)\:.$$
For every $x\in W$ we have that 
\begin{eqnarray*}
\liminf_{r\ra 0} \frac{\log \mu(\D(x,r)}{\log r}&\geq & \liminf_{r\ra 0}\frac{-\log r + C' \int_r^{\diam \!K}\be_K^2(x,t)\frac{dt}t}{-\log r}\\
&= & 1+ C' \liminf_{r\ra 0}\frac{\int_r^{1}\be_K^2(x,t)\frac{dt}t}{-\log r}~ = ~ 1+C'\beta^2(x)\;. 
\end{eqnarray*}
The mass distribution principle implies that 
$$\HD(K)\geq \HD(W')\geq 1+ C' \mbox{essup}_{\mu}\beta^2(x)\;,$$
where $\mbox{essup}_\mu\beta^2(x)$ is an essential supremum of $\beta^2(x)$ with respect to $\mu$.

The proof of the theorem follows as $\mbox{essup}_{\mu} \beta(x) >0$ if $\beta(x)>0$ and 
$\mbox{essup}_{\mu} \beta(x) \geq \beta^2_0$ if $\beta(x)\geq \beta_0$ for every $x\in W$.
\medskip


\subsection{Proof of Theorem~\ref{theo:nonpor}}

As the constant $C$ may be large, it is enough to prove the dimension estimate when $\eps$ is asymptotically close to $0$. We will construct a measure on $K$ following the inductive strategy of the proof of Theorem \ref{thmMainTech}. Let $M$ be defined as in the proof of Theorem \ref{thmMainTech}. We assume $\eps \ll \la^2 e^{-2M}$ and define
$$\eps'=\frac{2e^{2M}\eps}{\la^2}.$$
If $K$ is $\eps'$-porous in $2B$, we define $\mu_B$ as in Step 1. of the proof of Theorem \ref{thmMainTech}. Otherwise, we set $t_B(x)=\eps'R_B$ for all $x\in K_B$. $X(B)$ has the property that for all $x,x'\in X(B)$, $\H(E\cap B(x,\eps'R_B))<\frac{\eps'R_B}{100}$, the balls $B(x,2\eps'R_B)$ and $B(x',2\eps'R_B)$ are disjoint, $\frac{10}{11}|X'(B)| \leq |X(B)|$, and
$$K\se\bigcup_{x\in X'(B)}B(x, 10\eps'R_B).$$
As $K$ is not $\eps'$-porous in $2B$, we obtain that
$$B\se\bigcup_{x\in X'(B)}B(x, 11\eps'R_B).$$
As balls $B(x,2\eps'R_B)$ with $x\in X(B)$ are disjoint, using a volume argument, we obtain
$$10\cdot 11^{-d-1}                                                                                                                                                                                                                                                                                                                                                                                                                                                                                                                                                                                                                                                                                                                                                                                                                           \eps'^{-d} < |X(B)| < 2^{-d}\eps'^{-d}.$$
We define the measure $\mu_B$ in the same way as in the proof of Theorem \ref{thmMainTech}. We observe that for $x\in X(B)$, 
$$\frac{\mu_B(B(x, t_B(x)))}{\mu_B(B)}=\frac{t_B(x)}{\sum_{y\in X(B)}t_B(y)}\leq\frac{\eps'R_B}{\eps'R_B|X(B)|}<\frac{11^{d+1}}{10}\frac{t_B(x)}{R_B}\eps'^{d-1}.$$
We obtain a new form of the inequality \requ{SBou}. For $B\in\F_n$, let $n=k_1+k_2$, where $k_2$ is the number of steps $m$ at which $K$ is not $\eps'$-porous in $2B_m$, where $B_m\in\F_m$ with $B\se B_m$. By the previous inequality and Lemma \ref{lemTen} we obtain
$$\mu(B)\leq 10^{-k_1}\left(\frac{11^{d+1}}{10}\eps'^{d-1}\right)^{k_2}R^{-1} R_B.$$

The new measure $\mu_B$ that we have constructed does not satisfy the inequalities (\ref{equMuB}), because the constant $C_M$ is replaced by $C(d,\eps')=2^{-d}\eps'^{1-d}$. Repeating the argument from Step 3. of Theorem \ref{thmMainTech}, we obtain for all $x\in K$ and $0<r<\diam K$
\begin{equation}
\label{equTE}
\mu(B(x,r))\leq  C' \eps'^{-d}\left(11^{d+1}\eps'^{d-1}\right)^{k_2}R^{-1}r,
\end{equation}
where $C'>0$ is a universal constant. 

Observe that as a consequence of (P1), we obtain that $\H(\spt\mu\cap E)=0$ therefore $\mu(E)=0$ and $\mu(W)=1$. We now fix $x\in\spt\mu\cap W$, $\de>0$ and $r=\la^N$ for some large $N\in\N^*$ such that 
$$\#P_\la(N) > N\left(\kappa-\frac\de 2\right),$$
where 
$$P_\la(N)=\{m\in\{[-\log R] + 1,\ldots,N\}\ :\ K \text{ is not } \eps \text{-porous in }B(x,\la^m)\},$$
where we denote by $[x]$ the integer part of $x$, and $R$ is the radius of $B_0$.

For each $n\geq 0$, let $B_n\in\F_n$ containing $x$ and $R_n=R_{B_n}$. By construction, $\{x\}=\cap_{n>0}B_n$. Let $n_0=n_0(r)$ be the first $n>0$ such that $2B_n\se B(x,r)$. We construct a map 
$$\chi:P_\la(N)\ra P_\F(n_0)$$
that is at most $s$ to $1$, where $s=\frac{\log \eps}{\log \la}$ and
$$P_\F(n_0)=\{n\in\{0,\ldots,n_0\}\ :\ K \text{ is not } \eps' \text{-porous in }2B_n\}.$$ 

As $k_2=\#P_\F(n_0)$, if $N$ is large enough, we obtain 
\begin{equation}
\label{equK2}
k_2 \geq \frac Ns(\kappa-\de).
\end{equation}

Let $m\in P_\la(N)$ and $n=n_0(\la^m)-1$. We define 
$$\chi(m)=\left\{
\begin{array}{ll}
n & \text{ if }n\in P_\F(n_0),\\
n+1 & \text{ otherwise.}
\end{array}
\right.$$
If $n\in P_\F(n_0)$ then $R_{n+1}=\eps'R_n$, thus there are at most $\left[\frac{\log \eps'-\log 2}{\log \la}\right]+1$ values in $P_\la(N)$ mapped to $n$ by the first branch of $f$. If $n\notin P_\F(n_0)$, then by construction
$$M=\int_{R_{n+1}}^{R_n}\be_K(x,t)^2\frac{dt}t\geq \int_{R_{n+1}}^{\la^m/4}\be_K(x,t)^2\frac{dt}t.$$
Because $K$ is not $\eps$-porous in $B(x,\la^m)$, it is not $\eps'$-porous in $B(x,t)$ for every $t\in[e^{-2M}\la^m,\la^m]$. Therefore 
$$\be_K(x,t)\geq (1-\eps')\text{, for all }t\in[e^{-2M}\la^m,\la^m].$$
We can therefore estimate
$$\int_{e^{-2M}\la^m}^{\la^m/4}\be_K(x,t)^2\frac{dt}t \geq (1-\eps')^2(2M - \log 4) > M,$$
as $\eps'$ is small and $M$ is large. We conclude that $R_n > e^{-2M}\la^m$ so there are at most $\left[\frac{-2M}{\log \la}\right]+1$ values in $P_\la(N)$ mapped to $n+1$ by the second branch of $f$. This completes the proof that $\chi$ is at most $s$ to $1$.

Combining the inequalities \requ{TE} and \requ{K2}, we can compute that 
$$\frac{-\log \mu(B(x,r))}{-\log r} \geq \frac{C(d,\eps',R)}{N\log\la} + 1 + k_2\frac{(d+1)\log 11 + (d-1)\log \eps'}{N\log\la}$$
$$\geq \frac{C(d,\eps',R)}{N\log\la} + 1 + (d-1)(\kappa-\de)\frac{\log \eps'}{\log \eps} + \frac{(d+1)(\kappa-\de)\log 11}{\log \eps}.$$
By the definition of $\eps'$, 
$$\frac{\log \eps'}{\log \eps}=1 + \frac{\log\la -2M - \log 2}{|\log \eps|}.$$
Therefore,
$$\liminf_{N\ra\infty}\frac{\log \mu(B(x,r))}{\log r} \geq 1 + \kappa\left(d - 1 - \frac{C(d,\la)}{|\log\eps|}\right),$$
which, by the mass distribution principle, yields the desired lower bound for $\HD(K)$.
\subsection{About the hypotheses of Theorem~\ref{theo:nonpor}.}\label{sec:theo5}
A prototype  example
of  the unit circle $S^1\subset \C$ together with two  smooth curves  $\gamma_{-} $ and $\gamma_+$ winding infinitely many times around it both from inside and outside, shows that the hypothesis that $\H(E)<\infty$
can not be dropped. Indeed, assume that  a point traveling along any of these two smooth curves
is approching $S^1$  slowly enough  so that for every $\eps>0$,  $S^1$ is not $\eps$-porous at any $z\in S^{1}$ at all  scales small enough. 
Clearly, $\HD(S^1\cup \gamma_-\cup \gamma_+)=1$ and $\H(\gamma_-\cup\gamma_+)=\infty$.

The following examples show that the hypotheses about connectivity and $\H(W)>0$ are  also essential. 

{\textbf{ Example 1.}} 
We recall that if $S_i\subset \C$, $i=1,2$, then
$S_1+S_2$
stands for a set of all $z\in \C$ such that $z=v_1+v_2$, where $v_1\in S_1$ and $v_2\in S_2$.

Let $[0,1]\supset K_m=[0,2^{-4m}]+\{k2^{-2m}\ :\ k=1,\ldots, 2^{2m}-1\}$. In the formula below, $i^2=-1$. 
$$K=[0,1]\cup\bigcup_{m\geq 1}\bigcup_{k=0}^{2^{m}-1}\left(\pm i(2^{-m} + k2^{-2m}) + K_m\right).$$
One can easily check that $\H(K)<\infty$ and that for any $\eps>0$, $K$ is not $\eps$-porous at scales of density $1$ at  every $z\in(0,1)$.

{\textbf {Example 2.}} Let $\alpha\in (0,1)$. We consider a standard $\alpha$-Cantor set $K_\alpha\subset [0,1]$ defined as an invariant set for
the map $g: [0,1]\mapsto \R$, 
$$
g(x)=\left\{
\begin{array}{ccc}
x/ \alpha  & \mbox{if}  & x\in [0,1/2]  \\

(1-x)/\alpha &\mbox{if} & \; x\in [1/2,1]\; 
\end{array} \right.
$$
Geometrically, $K_\alpha$ is an  intersection of 
the union of $2^n$ closed intervals $J_j^n$, $j=1,\dots 2^n$,  each of the length $\left(\frac{1-\alpha}{2}\right)^n$. Every
$J_j^n$ is a connected component of $g^{-n}[0,1]$. Let $Q_j^n$ be a square with the base equal to $J_j^n$  contained
in the  half-plane $\Im z\geq 0$. In the formula below, $i$ stands for the imaginary number. 
Put $R_j^n$ to be the union of $\partial Q_j^n \cup \{ 2J_j^n + i \frac{k}{n}|J^n_j|: k=1,\ldots, n\}$ and its reflection with respect to the
real axis. The set
$$R_\alpha := [0,1]\cup  \bigcup_{n \geq 1}\bigcup_{j=1}^{2^{n}} R_j^n$$
is connected and every point of $K_\alpha$ is not $\eps$-porous for any $\eps>0$ at any scale   small enough.
One can easily check that $\H(R_\alpha)$ is bounded by a constant which depends only on $\alpha$. Suppose that  $\alpha_n$ tends to $0$
and denote the corresponding $R_\alpha$ by $R_n$ and $K_\alpha$ by $K_n$.  The  union 
$$R= [0,1]\cup \bigcup_{k\geq 1}\left(\beta_k R_{k}+\frac{1}{k}\right)\;,$$ 
where $\beta_k$  are chosen so that $\beta_k \H(R_k)\leq 4^{-k}$ is a continuum with the property that every point $z\in \bigcup_{n\geq 1} K_n$ is not $\epsilon$-porous
for any $\eps>0$ and any  scale  small enough.  Therefore, $\HD(W)\geq \sup_{n\geq 1} \HD(K_n)=1$. Nevertheless,  $\H(R)<\infty$.


\subsection{Universal  version of $\mu$}

If we do not require that the measure $\mu$ from Theorem~\ref{thmMainTech} has the support disjoint from  $F$ then the construction
can be  modified to obtain  a uniform scaling property of $\mu$.  

\begin{theo}
\label{thmTech}
Let $K\ss\R^d$ with $d\geq 2$ be a connected compact of diameter $1$. There exist a universal constants $C'>0$, a constant $C>0$ depending only on $d$, and a Radon probability measure $\mu$ supported on $K$ such that for all $x\in K$ and $r>0$
$$\mu(B(x,r))\leq Cr\exp\left(-C'\int_r^1\be_K^2(x,t)\frac{dt}t\right).$$
\end{theo}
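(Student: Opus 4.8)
The plan is to rerun the construction of Theorem~\ref{thmMainTech}, exploiting the fact that dropping the requirement $\mu(F)=0$ removes every place where the exceptional set $E$ intervened. Since $\diam K=1$, the ball $B_0=\D(x_0,2)$ with $x_0\in K$ contains $K$, so I start the induction from $\F_0=\{B_0\}$ with $R:=R_{B_0}\sim 1$; the absence of the factor $R^{-1}$ in the conclusion is precisely the gain from no longer needing a density argument to locate $B_0$. For a ball $B=\D(x_B,R_B)$, $x_B\in K$, I keep the stopping time $t_B(x)=\inf\{r\in(0,R_B):\int_r^{R_B}\be_{K_B}^2(x,t)\frac{dt}t\le M\}$, the sets $Z(B)=\ol{\{x\in K_B:t_B(x)=0\}}$ and $W(B)=K_B\sm Z(B)$, the covering family $X'(B)\se W(B)$, and the Ahlfors regular curve $\Ga_B\supseteq Z(B)\cup X'(B)$ of length comparable to $R_B$ furnished by Theorem~\ref{thmBJ97} (which applies since $\be_\infty^{Z(B)\cup X'(B)}(x)\apprle M$ on that set).

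The simplification is that Lemma~\ref{lemTen} is no longer needed in full: the only ingredient I use from it is inequality \requ{Thirty}, whose proof applies Corollary~\ref{coroDisk} to $S(B)$ with the small exceptional set $Z'(B)$ and never mentions $E$. So I simply set $X(B):=X'(B)$, which gives $\sum_{x\in X(B)}t_B(x)\ge 23R_B\ge 10R_B$, put $\mu_B:=\H_{|G_B}$ with $G_B=\bigcup_{x\in X(B)}G(x)$, and deduce \requ{MuB} from the Ahlfors regularity of $\Ga_B$ exactly as before. The genuinely new case is the degenerate one $W(B)=\es$, in which there are no wiggly points and the recursion cannot continue from $B$; there I take $\mu_B:=\H_{|K_B}$ instead. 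This is legitimate: since $K_B\se\Ga_B$ one has $\H(K_B)\le C_MR_B$ and $\mu_B(\D(x,r))\le\H(\Ga_B\cap\D(x,r))\apprle r$, while since $K$ is connected and $R_B<\diam K$ the $1$-Lipschitz map $z\mapsto|z-x_B|$ carries $K_B$ onto a set containing $[0,R_B)$, whence $\H(K_B)\ge R_B>0$. Thus $\mu_B$ is a nonzero finite measure of mass $\sim R_B$ satisfying \requ{MuB}, and—unlike the crosses—it is genuinely supported on $K_B\se K$, so the weak limit still lives on $K$ along terminated branches.

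With $\mu_B$ available for every ball, Steps~2 and~3 of Theorem~\ref{thmMainTech} go through verbatim: the families $\F_{n+1}=\{\D(x,t_B(x)):x\in X(B),\ B\in\F_n\}$ consist of disjoint balls by \requ{SR}, the measures $\mu_n$ obtained from $\mu_{n+1|B}=\frac{\mu_n(B)}{\mu_B(B)}\mu_B$ (and $\mu_{n+1|B}=\mu_{n|B}$ on degenerate $B$) satisfy the inductive properties (P1) and (P2), in particular $\mu_n(B)\le 10^{-k}R^{-1}R_B$ for $B\in\F_k$ with $k\le n$, and $\spt\mu_n$ lies in a $10^{-n}$-neighborhood of $K$, so the weak limit $\mu$ is a Radon probability measure supported on $K$. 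For the scaling bound I follow Step~3: for $x\in\spt\mu$ and $r>0$, with $(B_n)$ the chain of balls of $\F_n$ containing $x$ and $N=\max\{n:\D(x,r)\se 2B_n\}$, the estimate behind \requ{NM} gives $\int_r^1\be_K^2(x,t)\frac{dt}t\apprle(N+O(1))M$, while (P2) with the Ahlfors regularity of $\Ga_{B_N}$—or, when the chain terminates at a degenerate $B_k$, of $K_{B_k}\se\Ga_{B_k}$, applied directly to the frozen $\mu_{|B_k}$—gives $\mu(\D(x,r))\apprle 10^{-N}R^{-1}r$. Since $R\sim 1$ this is the claimed $\mu(\D(x,r))\le Cr\exp(-C'\int_r^1\be_K^2(x,t)\frac{dt}t)$ with $C'$ universal of order $(\log 10)/M$ and $C$ depending on $d$, after the usual doubling reduction from $x\in K$ to $x\in\spt\mu$.

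The one point I expect to require care is the degenerate branches: one must verify that freezing $\mu$ there is consistent with the inductive bookkeeping and, more importantly, that the exponential estimate does not become vacuous. Both hold for exactly the reason used in Step~3—the mass of a level-$k$ ball is at most $10^{-k}R^{-1}R_B$ whereas the $\be$-integral accumulated above its scale is only $\apprle kM$, so $10^{-k}$ beats $e^{-C'kM}$ as soon as $C'M<\log 10$. Everything else is a transcription of the proof of Theorem~\ref{thmMainTech} with the exceptional set $E$ suppressed.
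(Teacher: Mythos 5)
There is a genuine gap, and it sits exactly where you claim a simplification. You assert that the proof of inequality \requ{Thirty}, $\sum_{x\in X'(B)}t_B(x)\geq 23R_B$, ``never mentions $E$'' and therefore survives with $X(B):=X'(B)$ whenever $W(B)\neq\es$. It does mention $E$, in a disguised but essential way: the argument applies Corollary~\ref{coroDisk} to $S(B)$ with $Z'(B)\se Z(B)$ as the exceptional set and $\eps=10^{-2}$, and this requires $\H(Z'(B))\leq 10^{-2}R_B$. In the proof of Theorem~\ref{thmMainTech} that bound is exactly what the hypotheses $Z(B)\se F\se E$ and $\H(E\cap B)<\eps R_B$ deliver. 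In the setting of Theorem~\ref{thmTech} there is no a priori control on $\H(Z(B))$, and \requ{Thirty} can simply fail: take $K\cap B$ to be a diameter of $B$ with a tiny wiggly decoration attached near one endpoint. Then $W(B)\neq\es$ (so your degenerate case is not triggered), yet $X'(B)$ only covers the decoration and $\sum_{x\in X'(B)}t_B(x)\ll R_B$. The normalization $\mu_{n+1|B}=\frac{\mu_n(B)}{\mu_B(B)}\mu_B$ then concentrates essentially all of $\mu_n(B)$ on a ball of radius $t_B(x)\ll R_B$, the estimate $\mu_{n+1}(\D(x,t_B(x)))\leq \frac{t_B(x)}{10R_B}\mu_{n+1}(B)$ that drives the factor $10^{-n}$ in (P2) is lost, and with it the whole of Step~3.

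Your fallback case $W(B)=\es$ is the right idea but the wrong trigger. The paper's proof splits the balls into $\mathcal B=\{B:\H(Z(B))<R_B/100\}$, on which Corollary~\ref{coroDisk} legitimately yields \requ{Thirty} with $Z'(B)$ as exceptional set and the cross construction proceeds with $X(B)=X'(B)$, and $\mathcal G=\{B:\H(Z(B))\geq R_B/100\}$, on which the recursion is stopped and one sets $\mu_B:=\H_{|Z(B)}$; since $Z(B)$ lies on the Ahlfors regular curve $\Ga_B$ and carries length $\geq R_B/100$, this measure satisfies \requ{MuB} and can be frozen exactly as you freeze your degenerate branches. (Note also that on a stopped ball you should restrict to $Z(B)$ rather than take $\H_{|K_B}$: in the intermediate case $W(B)\neq\es$ the wiggly part of $K_B$ may have infinite length, whereas $Z(B)$ is always rectifiable by Theorem~\ref{thmBJ97}; when $W(B)=\es$ the two coincide.) With the dichotomy stated this way the rest of your transcription of Steps~2 and~3, including the bookkeeping for frozen branches, is sound.
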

\begin{proof}
The proof of Theorem~\ref{thmTech} is very similar  to the proof of Theorem~\ref{thmMainTech} and is based on 
Proposition~\ref{key}
and the constructions from the proof of Theorem~45 in~\cite{pajo}, compare~\cite{guy}. 
As in the proof of Theorem~\ref{thmMainTech} we have three steps. 
\begin{enumerate}
\item For any ball B centered on $K$ we construct a measure $\mu_B$.
\item We construct the probability measure $\mu$ on $K$.
\item We show that $\mu$ has the desired scaling properties.
\end{enumerate}
The estimates from the third step are very much the same  as in the proof of Theorem~\ref{thmMainTech} 
(with $B_0$ replaced by $K$, $R$ replaced by $1$, and the claims about $\H(E\cup \spt \mu_j)$ and $\H(E\cap \spt \mu)$  dropped). The first and the second steps are slightly different as they must account for the existance of non-wiggly 
parts of $K$. We use the notation from the proof of Theorem~\ref{thmMainTech}.

\textbf{Step 1.} Let $B=\D(x_B, R_B)$ be a ball centered on $K$ with $R_B\leq 1$. Let $K_B:=K\cap\ol B$ and observe that $K_B\cup\pa B$ is a connected compact. 
\newcommand{\intbes}[3]{\ensuremath{\int_{#1}^{#2}\be^2_{#3}(x,t)\frac{dt}t}}
\newcommand{\intbe}[2]{\ensuremath{\intbes{#1}{#2}{K_B}}}
We say that $B\in \mathcal G$ (respectively that $B\in \mathcal B$) if $\H(Z(B)) \geq \frac{R_B}{100}$ (respectively if $\H(Z(B)) < \frac{R_B}{100}$).

Assume first that $B\in \mathcal G$. Since $\intbes 0{R_B}{Z(B)} \leq 4M$ for any $x\in Z(B)$, by Theorem \ref{thmBJ97}, the set $Z(B)$ is contained in an Ahlfors regular curve $\Ga_B$ whose regularity constant depends only on $M$ and of length comparable to $R_B$. We set 
$$\mu_B:=\H_{|Z(B)}.$$
By the properties of $\Ga_B$, there exists a constant $C_M>0$ that depends only on $M$ such that the conditions~$(\ref{equMuB})$
are satisfied.

If $B\in \mathcal B$ then we can repeat the construction of $\mu_B$ from the first step of Theorem~\ref{thmMainTech}.
The only modification is that we define $\mu$ on $G'_B:=\bigcup_{x\in X'(B)} G(x)$ rather than on
$G_B:=\bigcup_{x\in X(B)} G(x)$, as it was the case in the proof of the Theorem~\ref{thmMainTech}. We put
$$\mu_B:=\H_{|G'_B}$$
By the properties of $\Ga_B$ it is easy to check that $\mu_B$ satisfies the inequalities (\ref{equMuB}).

\textbf{Step 2.} We construct a sequence $(\mu_n)_{n\geq 0}$ of probability measures supported respectively on a decreasing sequence of compact neighborhoods of $K$, having $K$ as their intersection. The measure $\mu$ is a weak limit of this sequence. 

For each $n\geq 0$, we define a collection of disjoint balls $\F_n$ as follows. Let $x_0,y_0\in K$ be such that $|x_0-y_0|=1=\diam K$. Let $\F_0:=\{\D(x_0,1)\}$ and 
$$\F_{n+1}:=\{\D(x,t_B(x)) : x\in X(B), B\in \F_n\cap\mathcal B\}.$$
\textit{Remarks.} 1) If $\F_n\se\mathcal G$ then $\F_{n+1}=\es.$
\\2) Let $B_1,B_2\in\F_n$, $B_1',B_2'\in\F_{n+1}$. As in the second step of the proof of Theorem~\ref{thmMainTech}
we obtain that 
$$B_1'\cap B_2'=\es, B_1\cap B_2'=\es \text{ and } B_1'\cap B_2=\es.$$

Let $\G_n:=\F_n\cap\G$ and $\B_n:=\F_n\cap B$. We will use the measures $\mu_B$ constructed in the previous step to define inductively the sequence $(\mu_n)_{n\geq 0}$. Let
$$\mu_0:=\frac{\mu_{B}}{\mu_{B}(\C)},$$
where $B=\D(x_0,1)$.
Assume that $\mu_0,\ldots,\mu_n$ have been defined with the following properties.
\begin{description}
\item{(P1)} $\spt\mu_n$ is contained in the disjoint union of balls
$$\F_n \cup \bigcup_{k=0}^{n-1}\G_k\,,$$
which is contained in a $10^{-n}$-neighborhood of $K$.
\item{(P2)} $\mu_n(B) \leq 10^{-k}R_B$ if $B\in\F_k$ for some $k\leq n$ ($R_B$ is the radius of $B$).
\end{description}
Define $\mu_{n+1}$ in the following way. For any ball $B\in\G_k,k\leq n$, 
$$\mu_{n+1|B}:=\mu_{n|B}.$$
For any ball $B\in\F_{n+1}$, 
$$\mu_{n+1|B}:=\frac{\mu_n(B)}{\mu_B(B)}\mu_B.$$
Properties (P1) and (P2) for $\mu_{n+1}$ are direct consequences of the inequalities \requ{Thirty} and \requ{SR}, and the fact that $Z(B)\ss K$ for any ball $B$.

\end{proof}

\begin{coro}\label{coro:con} 
Let $\mu$ be supplied either by Theorem~\ref{thmMainTech} or Theorem~\ref{thmTech}. Then $\mu$ is absolutely continuous with respect to $1$-dimensional Hausdorff measure $\H$.
\end{coro}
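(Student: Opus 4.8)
The plan is to use the scaling bound on $\mu$ together with the fact that the measure is, by construction, carried by a countable union of Ahlfors regular curves, so that it is enough to compare $\mu$ with $\H$ on each such curve. First I would recall the abstract comparison principle: if $\nu$ is a finite Borel measure on $\R^d$ and $\nu(B(x,r))\leq c\,r$ for every $x$ in some Borel set $A$ and every $r>0$, then the restriction $\nu|_A$ is absolutely continuous with respect to $\H$, because any set $N$ with $\H(N)=0$ can be covered by balls $B(x_i,r_i)$ centered on $N$ with $\sum r_i$ arbitrarily small, whence $\nu(N\cap A)\leq c\sum r_i$ is arbitrarily small. Thus the content of the corollary reduces to producing, for $\mu$-a.e. $x$, a linear upper bound $\mu(B(x,r))\leq c(x)\,r$ valid for all small $r$.

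Next I would extract exactly this from the theorems already proved. For $\mu$ coming from Theorem~\ref{thmMainTech}, the bound $\mu(B(x,r))\leq R^{-1}Cr\exp(-C'\int_r^1\be_K^2(x,t)\frac{dt}{t})\leq R^{-1}Cr$ holds for \emph{every} $x\in K$ and $r>0$, since the exponent is nonnegative; so $\mu$ is globally linearly bounded and the abstract principle applies directly with $A=K$. For $\mu$ coming from Theorem~\ref{thmTech} the same display gives $\mu(B(x,r))\leq Cr\exp(-C'\int_r^1\be_K^2(x,t)\frac{dt}{t})\leq Cr$, again for all $x,r$. In both cases $\mu(B(x,r))\leq Cr$ everywhere, so by the paragraph above $\mu\ll\H$. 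One should also note $\mu(\R^d)<\infty$ (it is a probability measure), so there is no issue with $\sigma$-finiteness in invoking the covering argument.

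I would then write the covering step explicitly: fix a Borel set $N$ with $\H(N)=0$. For every $\eta>0$ there is a cover $N\subseteq\bigcup_i B(x_i,r_i)$ with $x_i\in N$ (using that $N\subseteq K$ and that one may always recenter a covering ball on $N$ at the cost of doubling its radius) and $\sum_i 2r_i<\eta$. Then $\mu(N)\leq\sum_i\mu(B(x_i,2r_i))\leq C\sum_i 2r_i< C\eta$, and letting $\eta\to0$ gives $\mu(N)=0$. Hence every $\H$-null set is $\mu$-null, which is the definition of $\mu\ll\H$.

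The argument has essentially no obstacle: the only mild point is the recentering of covering balls on $N$, which merely changes the universal constant by a factor $2$, and the passage from "$\mu(B(x,r))\le Cr$ for all $x\in K$" to absolute continuity, which is the standard fact that a measure with upper linear density with respect to $\H^1$ along its support is absolutely continuous with respect to $\H^1$. The substantive work — getting the linear bound $\mu(B(x,r))\le Cr$ — has already been done inside Theorems~\ref{thmMainTech} and~\ref{thmTech}, so the corollary is genuinely a one-line consequence once the definitions are unwound.
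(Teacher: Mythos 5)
Your proposal is correct and follows exactly the same route as the paper: the paper likewise observes that the exponential factor is at most $1$, so $\mu(B(x,r))\leq Mr$ for all $x\in K$ and $r<\diam K$, and then invokes the standard covering argument (which you spell out explicitly) to conclude $\H(A)=0\Longrightarrow\mu(A)=0$.
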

\begin{proof}
In both cases, for each measure $\mu$, there exists a constant $M>0$ (which is universal if $\mu$ is provided by Theorem~\ref{thmTech}) such that for every ball $B(x,r)$, $x\in K$, $r<\diam K$,
\begin{equation}\label{equ:mu1}
\mu(B(x,r))<Mr\;.
\end{equation}
A standard argument shows that $\H(A)=0\Longrightarrow \mu(A)=0$.
\end{proof}

\section{Topological Collet-Eckmann rational  maps}\label{sec:TCE}
TCE property (see the definition in Section \ref{sec:bowen}) implies the so called {\em exponential shrinking} which states that there exists  a  number $\xi<1$ 
(depending  on $\delta$, $\kappa$, and $L$ but not on $z$) so that

\[\diam \mbox{Comp}_{z}f^{-n}(B_{\delta}(f^{n}(z))\leq \xi^{n}\;\]
for every  $n\in \N$ and $z\in\mathcal{J}$.

\paragraph{Proof of Theorem~\ref{TCE}.} The proof is by contradiction. 
We may assume, by decreasing slightly $\delta$,  that every
component   $\mbox{Comp}_{z}f^{-n}(B_{\delta}(f^{n}(z))$ satisfies the inclusions,
$$B(z,r_n)\subset  \mbox{Comp}_{z}f^{-n}(B_{\delta}(f^{n}(z))\subset B(z,\alpha r_n)\;,$$
where $\alpha>1$ is a constant which depends only on $f$ and $L$. Additionally, $f^n (B(z,r_n))$
contains a ball of radius comparable to $\delta$. 

Let $\lambda = 1/2$. 
Put  $\chi_i=1$ if there is $n$ such that $r_n\in A_i$, where $ A_i= [\lambda^{i+1}, \lambda^i)$,
$i$ is non-negative integer,  and $\chi_i=0$ otherwise. We want to show that there exists $\varepsilon>0$ such that
$$\liminf_{n\rightarrow \infty} \frac{\sum_{i=0}^{n-1}\chi_i}{n} \geq \varepsilon\;.$$

It is enough to prove that the sequence 
$[\log 1/r_{n}]$ is of positive density amongst integers, where $[\cdot]$ stands for an integer
part of a given number. Enumerate by $n_k$ all consecutive passages to the scale $\delta$ with bounded criticality $L$. By the exponential shrinking property, for every $x\in \mathcal{J}$
$\diam \mbox{Comp}_{f^{n_k-i}(x)} f^{-i}(B(f^{n_k}(x),\delta)$ is smaller than $\delta/2$ if only $i$ is large enough.   
Therefore,  without loss of generality, the modulus of the annulus
$(B(f^{n_{k}}(x), \delta) \setminus \mbox{Comp}_{f^{n_k}(x)}f^{-n_{k+1}+n_k}(B(f^{n_{k+1}}(x),\delta)$
is bigger than $\log 2$. By the  definition of $n_k$,  and Teichm\"uller's module theorem, there exists
a constant $P>0$ such that at  most $P$ consecutive numbers $\log \frac{1}{r_n}$ can be counted as the same  integer.  

Let $M = \sup_{z\in \cal J} |f'(z)|$. Then 
$$M^{-n}\leq \diam \mbox{Comp}_{z}f^{-n}(B_{\delta}(f^{n}(z))$$
and

\begin{eqnarray}
\frac{1}{N}\#(\{[\log \frac{1}{r_{k}}] : k\geq 0 \}\cap[0,N]) &\apprge&
\frac{1}{N P}\#\{ k\geq 0: 2k \kappa^{-1} \log M< N\}\nonumber \\
&\geq& \frac{\kappa}{2P\log M}:=\kappa_0\;\label{equ:kappa} 
\end{eqnarray}
provided $N$ is large enough. 

Suppose that the Julia set ${\cal J}$ of $f$ is not mean wiggly.
Hence,  for  every $\beta>0$ and $\rho\in (0,1)$  there is a point $z_\beta\in {\cal J}$ 
such that $B(z_\be, \lambda ^n)\cap {\cal J}$ is contained in 
a $\beta \lambda^n$ neighborhood of a line minimizing $\beta_{\cal J}(z_\beta,\lambda^n)$
and this property holds for the set $Z_\beta$ of integers $n\in \N$ of the density bigger than  $\rho$. 
Let us choose $\rho$ so that $\rho+\kappa_0>1$, where $\kappa_0$ is defined in the estimate (\ref{equ:kappa}).

For every $n\in Z_\beta$, 
\begin{description}
\item{\rm(i)} $f(B(z_\beta, \alpha r_n))\supset B_n=B(f^n(z),\delta')$\;\;\;\mbox{and}\;\;\;
$\delta'\sim \delta$, 
\item{\rm (ii)} the degree of $f^n$ on $B(z_\beta,r_n)$ is bounded by  $ L$. 
\end{description}

This means that there is  $\beta'>0$ (which does not depend on $n$) such  that the Julia set $\cal J$ in $B_n$
is contained in a $\beta'$-neighborhood of a finite union  of analytic Jordan arcs. Also, $\beta'$ tends
to $0$ when $\beta$ does so.
Passing to the limit with $\beta\rightarrow 0$, we obtain that  the Julia set in some ball $B$ of radius $\delta'$ 
is a finite union  of analytic Jordan  arcs.
By the eventually onto property ($B$ is mapped over ${\mathcal J}$ by an iterate of $f$), the whole $\cal J$ is a finite union  of analytic Jordan arcs. 

Since every rational function $f$ can have either $1$, $2$ or an infinite number of Fatou components, the piecewise analyticity
of $\cal J$ implies that $f$ has either $1$ or $2$ Fatou components. By connectivity of $\cal J$, every Fatou component
$\cal F$ is simply connected. Without loss of generality, we can assume that every
Fatou component is totally invariant by $f$. Also, $\cal J$ coincides with the boundary of every Fatou component. 
For every  $z\in \cal J$, we define the  set of angles of accesses $\{\theta_i(z)\}_{\cal F}$
from  inside of every  invariant Fatou component $\cal F$.  Observe that if  
$\{\theta_i(z)\}_{\cal F}\setminus \{\pi, 2\pi\}\not = \emptyset$  then  the same is true for every preimage  $y\in f^{-1}(z)$.
Since the set of preimages of a given $z\in \cal J$ is infinite and $\cal J$ is a union of finitely many Jordan analytic arcs, we infer  that for every $z\in \cal J$,   $\{\theta_{i}(z)\}_{\cal F} \subset\{\pi, 2\pi\}$. As a result,  $\cal J$ is either an analytic Jordan arc or an analytic circle. By the Fatou theorem~\cite{fat}, the Julia set coincides with a geometric circle or a segment.

\paragraph{Acknowledgement} The first author would like to thank G. David for many useful discussions about corona type constructions.

\end{document}